\documentclass[12pt,reqno]{amsart}
\usepackage{amsmath,amssymb,bbm,esint,times,url,algorithm}
\usepackage[noend]{algpseudocode}

\usepackage{tikz}
\usetikzlibrary{calc}
\usepackage[bookmarks=false,unicode,colorlinks,urlcolor=red,citecolor=red,linkcolor=blue]{hyperref}
\usepackage{aliascnt}
\usepackage{booktabs}

\usepackage[margin=1in]{geometry}

% show labels and check unused
%\usepackage{refcheck}
%\makeatletter
%\newcommand{\refcheckize}[1]{%
%  \expandafter\let\csname @@\string#1\endcsname#1%
%  \expandafter\DeclareRobustCommand\csname relax\string#1\endcsname[1]{%
%    \csname @@\string#1\endcsname{##1}\wrtusdrf{##1}}%
%  \expandafter\let\expandafter#1\csname relax\string#1\endcsname
%}
%\makeatother
%\refcheckize{\autoref}
%

\usepackage{subfig}
\captionsetup[subfigure]{margin=10pt}

\newtheorem*{remark}{Remark}
\newtheorem*{remarks}{Remarks}

\newtheorem*{definition}{Definition}
%\newtheorem*{algorithm}{Algorithm}

%\numberwithin{equation}{section}

% autoref setup
\newtheorem{theorem}{Theorem}[section]

\newaliascnt{lemma}{theorem}
\newtheorem{lemma}[lemma]{Lemma}
\aliascntresetthe{lemma}

\newaliascnt{proposition}{theorem}
\newtheorem{proposition}[proposition]{Proposition}
\aliascntresetthe{proposition}

\newaliascnt{corollary}{theorem}
\newtheorem{corollary}[corollary]{Corollary}
\aliascntresetthe{corollary}

\newaliascnt{conjecture}{theorem}
\newtheorem{conjecture}[conjecture]{Conjecture}
\aliascntresetthe{conjecture}

% end autoref setup

 \newcommand{\Tau}{\mathcal{T}}

\newcommand{\arxiv}[1]{%
 \href{http://front.math.ucdavis.edu/#1}{ArXiv:#1}}
\newcommand{\mref}[1]{%
\href{http://www.ams.org/mathscinet-getitem?mr=#1}{#1}}

\usepackage{todonotes}

\begin{document}

\title[]{Nearly radial Neumann eigenfunctions on symmetric domains}
\author[]{Nilima Nigam, Bart{\l}omiej Siudeja and Benjamin Young}

\address{Department of Mathematics, Simon Fraser University}
\email{nigam\@@math.sfu.ca}
\address{Department of Mathematics, Univ.\ of Oregon, Eugene,
OR 97403, U.S.A.}
\email{siudeja\@@uoregon.edu}
\address{Department of Mathematics, Univ.\ of Oregon, Eugene,
OR 97403, U.S.A.}
\email{bjy\@@uoregon.edu}

\keywords{finite elements, eigenvalue bounds, nodal line}
\subjclass[2010]{\text{Primary 35B05. Secondary 35P15, 65N30.}}

\begin{abstract}
    We study the existence of Neumann eigenfunctions which do not change sign on the boundary of some special domains. We show that eigenfunctions which are strictly positive on the boundary exist on regular polygons with at least 5 sides, while on equilateral triangles and cubes it is not even possible to find an eigenfunction which is nonnegative on the boundary. %Interestingly, it was known that eigenfunctions nonnegative on the boundary exist on some rectangles. 

    We use analytic methods combined with symmetry arguments to prove the result for polygons with six or more sides. The case for the regular pentagon is harder. 
We develop a validated numerical method to prove this case, which involves iteratively bounding eigenvalues for a sequence of subdomains of the triangle.  We use a learning algorithm to find and optimize this sequence of subdomains, making it straightforward to check our computations with standard software.
\end{abstract}

\maketitle
\section{Introduction}
We study the existence of an eigenfunction of the Neumann Laplacian which is positive (or nonnegative) on the boundary of  highly symmetric domains. Recently, Hoffmann-Ostenhof \cite{HO13} proved that on rectangles, any Neumann eigenfunction that is positive on the boundary must be constant. In this paper we prove similar results for regular polygons and higher dimensional boxes. 

Schiffer's conjecture (see \cite{Yau}) states that if a Neumann eigenfunction is constant on the boundary of a domain, then either the eigenfunction is constant in the domain, or the domain must be a disk. This conjecture is still open, although many partial results are known (see e.g. \cite{Be80,BY82,De12}). We relax the boundary restriction (positive instead of constant) and ask if the modified conjecture holds for a special class of domains. 

Our problem has a rather interesting physical interpretation in terms of a sloshing liquid in a cup with a uniform, highly symmetric cross-section (see \cite{KKS13} for a relation between sloshing and Neumann eigenvalue problem). It is nearly obvious that one can disturb a fluid in a round cup so that the created wave is radial. In particular the fluid level rises and lowers simultaneously along the whole cup wall.
Hoffmann-Ostenhof \cite{HO13} proved that it is possible to create a wave in a square cup so that there are a few stationary points along the wall, but it is impossible to make all points move in unison. We prove that no such wave can be created in a triangular cup, even if stationary points are allowed. At the same time, it is possible to create a wave with unison movement along the boundary of regular polygons with at least 5 sides.  In summary, we have:

\begin{theorem}\label{thm:equilateral}
  Any Neumann eigenfunction that is \textbf{nonnegative} on the boundary of an equilateral triangle is constant inside.
\end{theorem}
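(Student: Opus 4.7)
My plan is a symmetry reduction followed by an explicit analysis on a fundamental subtriangle. Let $u$ be a Neumann eigenfunction on $T$ with eigenvalue $\lambda>0$ (so $u$ is nonconstant) and $u\ge 0$ on $\partial T$; I seek a contradiction.

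The dihedral group $D_3$ acts on $T$ by isometries preserving $\partial T$, and hence every translate $u\circ g$ ($g\in D_3$) is a Neumann eigenfunction of the same eigenvalue, also nonnegative on $\partial T$. Consider the $D_3$-symmetric sum $v:=\sum_{g\in D_3}u\circ g$: it is a $D_3$-invariant eigenfunction and satisfies $v\ge 0$ on $\partial T$. I claim $v\not\equiv 0$. Indeed, if $v\equiv 0$, then at any interior point $x$ of a side $S\subset\partial T$ we have $gx\in\partial T$ for each $g$, so $u(gx)\ge 0$; the sum being zero forces $u(gx)=0$ for every $g$, and in particular $u(x)=0$. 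Thus $u\equiv 0$ on $\partial T$, and combined with the Neumann condition $\partial_\nu u=0$ on $\partial T$, Holmgren's uniqueness theorem for the real-analytic Helmholtz equation yields $u\equiv 0$, contradicting nonconstancy. Hence I may replace $u$ by $v$ and assume from here on that $u$ itself is $D_3$-invariant.

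A $D_3$-invariant function on $T$ is determined by its restriction to the fundamental $30$-$60$-$90$ subtriangle $T_0$ obtained by cutting $T$ along its three medians. Because $u$ is invariant under the reflections across the two medians bounding $T_0$, its normal derivative across each vanishes, and together with the original Neumann condition on the short leg of $T_0$ (which is half of a side of $T$), the restriction $u|_{T_0}$ is a Neumann eigenfunction of $T_0$, satisfying $u\ge 0$ on that short leg.

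The Neumann spectrum of $T_0$ admits a known closed-form description (going back to Lam\'e; see McCartin) as symmetrized trigonometric exponentials adapted to its structure as a $\tilde G_2$ alcove. Restricted to the short leg of $T_0$, each nonconstant eigenfunction becomes an explicit trigonometric polynomial in one variable, and what remains is to verify by direct inspection that none of these polynomials is nonnegative on the whole leg. I expect this final sign analysis to be the main technical obstacle; it should be manageable either by unfolding $T_0$ once more across its long leg (returning to the equilateral, where the reflection-symmetry constraint together with $u\ge 0$ on a full side forces a very rigid trigonometric identity) or by computing the sign of each eigenfunction at a carefully chosen point of the leg, exploiting the cyclic structure of the exponents.
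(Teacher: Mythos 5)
Your symmetrization step matches the paper's opening move, and the Holmgren/unique-continuation argument you give for why the $D_3$-symmetrized sum $v$ is not identically zero is a clean and correct way to justify an implicit assumption in the paper's proof.  The passage to the fundamental $30$--$60$--$90$ subtriangle $T_0$ is also a valid reduction: a $D_3$-invariant Neumann eigenfunction of the equilateral restricts to a Neumann eigenfunction of $T_0$ and vice versa, and the nonnegativity condition becomes nonnegativity on the leg that is half of an original side.  (Minor slip: that leg is the \emph{longer} leg of $T_0$, not the shorter one; the short leg runs from the midpoint of a side to the centroid.)  The paper stays on the equilateral and works with McCartin's explicit mode formulas $\varphi_{m,n}$, but this is morally the same reduction.

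The genuine gap is the part you explicitly defer: the sign analysis of $D_3$-invariant eigenfunctions on the boundary leg.  This is not a routine technical verification, and the two heuristics you propose do not close it.  First, evaluating each mode at a single well-chosen point of the leg is essentially what the paper does in its Section 3.1, and it succeeds in ruling out \emph{strictly positive} boundary values (via a divisibility argument that locates a point where each mode belonging to a fixed eigenvalue equals $-1$).  But the paper explicitly notes that this point-evaluation method cannot rule out the \emph{nonnegative} case, because one can only conclude the linear combination vanishes somewhere, not that it changes sign.  Second, the ``rigid trigonometric identity'' alternative is vague and there is a real obstruction: eigenvalues $\lambda_{m,n}=\tfrac{16\pi^2}{9}(m^2+mn+n^2)$ can have high multiplicity, so one must handle \emph{arbitrary} linear combinations $\sum a_{m,m+3k}\varphi_{m,m+3k}$ over all pairs sharing that eigenvalue.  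The paper's actual argument for the nonnegative case is a more delicate cancellation scheme: it produces a union of small symmetric intervals centered at $x_i=(2i+1)/(2M)$ over which every relevant mode individually integrates to zero (the $k>0$ modes via a Dirichlet-kernel identity with $\sin(\alpha/2)=0$; the $k=0$ mode by choosing the half-width $a_0$ to solve $z_0 = 2\sin z_0$), then invokes the fact that a Neumann eigenfunction cannot vanish identically on an open boundary arc to force a sign change.  Without an argument of this character, the reduction to $T_0$ leaves the theorem unproved.
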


\begin{theorem}[Hoffmann-Ostenhof \cite{HO13}]
  Any Neumann eigenfunction that is \textbf{positive} on the boundary of a rectangle is constant inside.
\end{theorem}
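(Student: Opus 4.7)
The plan is to use separation of variables. Any Neumann eigenfunction on $R = [0,a] \times [0,b]$ with eigenvalue $\lambda$ admits a finite expansion
\begin{equation*}
u(x,y) = \sum_{(m,n) \in S} c_{m,n} \cos(m\pi x/a) \cos(n\pi y/b), \quad S = \{(m,n) \in \Z_{\geq 0}^2 : (m/a)^2 + (n/b)^2 = \lambda/\pi^2\}.
\end{equation*}
If $\lambda = 0$ then $u$ is constant, so we may assume $\lambda > 0$ and $u > 0$ on $\partial R$ and aim for a contradiction.

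First, I would extract sign and parity information by integrating $u$ along each edge. The trace $u(x,0) = \sum c_{m,n}\cos(m\pi x/a)$ is a strictly positive cosine polynomial, so $\int_0^a u(x,0)\,dx > 0$ isolates the $m = 0$ term and forces $(0, n_0)$ with $n_0 = b\sqrt{\lambda}/\pi$ to lie in $S$ with $c_{0,n_0} > 0$. Comparing with $u(x,b) > 0$ gives $(-1)^{n_0} c_{0,n_0} > 0$, so $n_0$ must be an even positive integer. Applying the same argument to the vertical edges produces an even positive integer $m_0$ with $(m_0, 0) \in S$, $c_{m_0, 0} > 0$, and the compatibility relation $m_0/a = n_0/b$.

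Next, I would exploit pointwise positivity on each edge together with the reflection symmetries of $R$. Averaging $u$ under the four Klein-type reflections $(x,y) \mapsto (a - x, y)$ and $(x,y) \mapsto (x, b - y)$ produces another Neumann eigenfunction with the same eigenvalue, still positive on $\partial R$, whose Fourier support lies in $S \cap (2\Z \times 2\Z)$. Combining this even--even symmetrization with pointwise evaluations such as $u(a/m_0, 0) > 0$ and $u(0, b/n_0) > 0$ at zeros of the leading cosines yields a system of inequalities on the $c_{m,n}$ that over-determines the admissible coefficients. In the lowest multiplicity case (for instance the square $a = b$ with $S = \{(0, 2), (2, 0)\}$) these inequalities collapse to the incompatible pair $c_{0, n_0} > c_{m_0, 0}$ and $c_{m_0, 0} > c_{0, n_0}$, delivering the contradiction immediately.

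The hard part will be the case of high spectral multiplicity, when $a^2/b^2$ is rational and the ellipse defining $S$ carries many additional lattice points beyond the two axial generators. A single edge moment only constrains sums of coefficients along fibers of the projection $(m,n) \mapsto m$, so one must interlace information from all four edges and apply pointwise positivity at enough test points to eliminate each non-axial coefficient. The bookkeeping required to carry this through for arbitrary aspect ratio is where the main technical effort of the argument lies.
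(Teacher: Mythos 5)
Your framework has the right shape and matches the spirit of the argument. (The paper attributes this theorem to Hoffmann-Ostenhof and does not reprove it, but its own dimension-two box argument in \autoref{lowdim} and the tiling reduction at the end of \autoref{sec:cubes} constitute a closely parallel proof, and the paper remarks in \autoref{sec:trieigpositive} that its equilateral-triangle argument is ``very similar'' to Hoffmann-Ostenhof's.) Integrating the traces along opposite edges correctly isolates the axial modes, forces $(0,n_0),(m_0,0)\in S$ with positive coefficients and even indices, and yields $m_0/a=n_0/b$, which already eliminates irrational aspect ratios. Symmetrizing under the Klein four-group to restrict to even--even modes is also exactly right.

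The gap is exactly where you flag it, but it is not bookkeeping --- it is the number-theoretic heart of the theorem, and without it nothing is established. Your pair of inequalities $c_{0,n_0}>c_{m_0,0}$ and $c_{m_0,0}>c_{0,n_0}$ closes the argument only when $S$ contains nothing but the two axial modes; as soon as non-axial pairs enter (for example $(6,8)$ and $(8,6)$ alongside $(0,10)$ and $(10,0)$ on a square), the evaluations at $(a/m_0,0)$ and $(0,b/n_0)$ pick up extra terms and the two inequalities no longer contradict each other. What is needed is a single boundary point at which \emph{every} admissible mode contributes zero simultaneously. The paper's version of this step in the dimension-two case of \autoref{lowdim} is a $2$-adic argument: writing $\lambda=\pi^2 m_0^2$ with $2^s\,\Vert\, m_0$, one shows that in any representation $m_1^2+m_2^2=m_0^2$ by nonnegative even integers exactly one $m_i$ is divisible by $2^{s+1}$, and then evaluating the symmetrized eigenfunction at the edge point with transverse coordinate $h=1/2^s$ makes the factor $\cos(m_i\pi h)+(-1)^{m_i}$ vanish in each term, forcing the eigenfunction to be zero at an interior point of an edge and hence not strictly positive. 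Without some such uniform cancellation identity valid across all of $S$, the proposal does not prove the theorem. You would also simplify matters by first reducing a rational-aspect rectangle to a square via Neumann-reflection tiling, as in the ``general boxes'' discussion of \autoref{sec:cubes}; that replaces the two-parameter Diophantine system of your last paragraph with the single equation $m^2+n^2=m_0^2$.
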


\begin{theorem}\label{thm:regular}
  There exists a Neumann eigenfunction on a regular polygon with $n\ge 5$ sides that is \textbf{positive} on the boundary and not constant.
\end{theorem}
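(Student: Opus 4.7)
My plan is to exploit the dihedral symmetry $D_n$ of $P_n$ and work with Neumann eigenfunctions invariant under this group. Such an eigenfunction descends to the fundamental right triangle $T_n$ with vertices at the center $O$, a vertex $V$, and the midpoint $M$ of an adjacent side, where it satisfies Neumann conditions on all three sides; conversely, any Neumann eigenfunction of $T_n$ lifts to a $D_n$-invariant Neumann eigenfunction of $P_n$. Because $D_n$ contains reflections through $M$ and through $V$, positivity of the lift on $\partial P_n$ is equivalent to positivity on the single half-edge $\overline{VM}$, so the task reduces to producing a non-constant Neumann eigenfunction of $T_n$ that is strictly positive on $\overline{VM}$.

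The heuristic guiding the construction is a comparison with the inscribed disk. Normalize so that the inradius of $P_n$ equals $1$. The second radial Neumann eigenfunction of the unit disk is $u_*(r) = J_0(\alpha_2 r)$, where $\alpha_2 \approx 7.016$ is the second positive zero of $J_1$. The polygon boundary lies in the annulus $1 \le r \le \sec(\pi/n)$, and for $n \ge 6$ one has $\alpha_2 \sec(\pi/n) < j_{0,2} \approx 8.654$, the second positive zero of $J_0$. Consequently $u_*|_{\partial P_n}$ lies strictly inside the second positive lobe of $J_0$ and is bounded away from zero when $n \ge 6$.

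To promote this heuristic to a proof for $n \ge 6$, I would parametrize $D_n$-invariant solutions of $\Delta u + \lambda u = 0$ by the angular Fourier expansion
\[ u(r,\theta) = c_0 J_0(\sqrt{\lambda}\, r) + \sum_{k\ge 1} c_k J_{kn}(\sqrt{\lambda}\, r) \cos(kn\theta), \]
and impose the Neumann condition on the edge $\overline{VM}$. Because $J_{kn}(r) \sim (r/2)^{kn}/(kn)!$ near the origin and $P_n$ has bounded diameter, the modes with $kn \ge 6$ are extremely small on $P_n$. Starting from $u_*$ as an ansatz and applying an implicit-function-type argument, one locates an actual Neumann eigenvalue $\lambda$ of $P_n$ close to $\alpha_2^2$ together with an eigenfunction in which $c_0 \approx 1$ and $|c_k| \ll 1$ for $k \ge 1$; positivity on $\overline{VM}$ then follows from positivity of $u_*$ there combined with uniform tail bounds on the Bessel corrections.

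The main obstacle is making this perturbation argument fully rigorous: one has to control the Fourier tail uniformly along $\overline{VM}$, prove existence (and suitable isolation) of an eigenvalue near $\alpha_2^2$, and verify that the associated eigenfunction is a genuine $C^0$-small perturbation of $u_*$ so that the sign is preserved. The disk heuristic is already tight at $n=5$, where $\alpha_2 \sec(\pi/5) \approx 8.67 > j_{0,2}$ makes $u_*$ actually change sign on $\partial P_5$; this is precisely why the pentagon resists the analytic argument and why the separate computer-assisted scheme developed in the remainder of the paper is needed.
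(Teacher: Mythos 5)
Your reduction to a Neumann eigenfunction on the fundamental right triangle $T_n$ that is positive on the half-edge $\overline{VM}$ is exactly the same starting point as the paper (the paper's right triangle $ABO$ in Figure~2), but from that point onward the two arguments diverge completely and your route has a genuine gap. The paper's proof for $n\ge 6$ is short and self-contained: it shows that the doubled isosceles triangle $ABD$ has a \emph{simple} second Neumann eigenvalue with a \emph{symmetric} eigenfunction (\cite[Theorem~3.1]{LSminN}), invokes Courant's nodal domain theorem so there are exactly two nodal domains, and then uses \autoref{line} (a known fact about nodal lines of the second Neumann eigenfunction on triangles) to rule out the possibility that the nodal line meets the short side. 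No perturbation theory and no Bessel functions are involved.

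Your plan instead rests on a Bessel--Fourier perturbation off the inscribed disk, and the key quantitative claim in it is not justified. You assert that ``$J_{kn}(r) \sim (r/2)^{kn}/(kn)!$ near the origin and $P_n$ has bounded diameter'' implies the modes with $kn \ge 6$ are ``extremely small'' on $P_n$. That small-argument asymptotic is only valid when the argument is much less than the order; here the relevant argument is $\sqrt{\lambda}\,r \approx \alpha_2\sec(\pi/n) \approx 8$ for $n=6$, and the order is $kn=6$, so you are near the turning point of $J_6$ and $J_6(8)\approx 0.3$ is not small compared with $J_0$. (Plugging $8$ into the quoted asymptotic even gives $(4)^6/6! \approx 5.7$, which should already flag that the formula is being used far outside its range.) Thus for $n = 6,7,8$ the ``tail'' of your angular expansion is not uniformly negligible, which is precisely the regime you need. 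Without an actual control of the coefficients $c_k$ and an isolation estimate for an eigenvalue near $\alpha_2^2$, the implicit-function-type step is only aspirational; as written, the argument proves nothing for the small $n$ that matter (for large $n$ the paper's nodal-line argument is already conclusive). Separately, a minor slip: $j_{0,2}\approx 5.520$; the zero $\approx 8.654$ bounding the second positive lobe of $J_0$ is $j_{0,3}$. You are right that the disk heuristic is already tight (and in fact fails) at $n=5$, and you defer that case to the paper's computer-assisted argument, which is the correct assessment.
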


\begin{remark}
Squares are in some sense a critical case for regular polygons. An eigenfunction that is positive on the boundary does not exist, yet
\begin{align*}
  \varphi(x,y)=-\cos \pi x-\cos \pi y 
\end{align*}
is an eigenfunction of the square $[-1,1]^2$. It is positive on the boundary, except at the midpoints of all sides (where it equals 0).
\end{remark}

We also study higher dimensional boxes. Surprisingly, cubes no longer have nonnegative eigenfunctions.
\begin{theorem}\label{thm:cubes}
  Any Neumann eigenfunction that is nonnegative on the boundary of a cube (more generally a box) in dimension $d>2$ is constant inside.
\end{theorem}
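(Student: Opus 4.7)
The plan is to expand $\varphi$ in the Neumann cosine basis of the box and combine cross-sectional averaging with boundary-point evaluations; the hypothesis $d \geq 3$ enters critically at the end through the existence of boundary points with multiple coordinates simultaneously equal to $1/m$.

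Assume for contradiction that $\varphi$ is nonconstant, so $\lambda > 0$, and write $\varphi(x) = \sum_{\mathbf{k}} c_\mathbf{k} \prod_{i=1}^d \cos(k_i\pi x_i/a_i)$ summed over $\mathbf{k}\in\Z_{\geq 0}^d$ with $\pi^2\sum_i(k_i/a_i)^2 = \lambda$. For each direction $j$, let $h_j(x_j) = \int \varphi\, dx'$ denote the cross-sectional average over the other coordinates. The Neumann condition gives $-h_j''=\lambda h_j$ with $h_j'(0)=h_j'(a_j)=0$, so $h_j$ is either zero or a 1D Neumann eigenfunction, while $\varphi\geq 0$ on the face $x_j = 0$ forces $h_j(0)\geq 0$. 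If $h_j\equiv 0$, then $\varphi\equiv 0$ on the face $x_j = 0$ (a nonnegative continuous function with vanishing integral), and expanding this restriction in the face's Fourier basis — where each tuple $(k_i)_{i\neq j}$ corresponds to at most one compatible $k_j$ via the eigenvalue equation — propagates the vanishing to every $c_\mathbf{k}$, contradicting $\varphi\not\equiv 0$. Therefore $\lambda = (m_j\pi/a_j)^2$ for some $m_j\in\Z_{\geq 1}$ in every direction, which forces the box to be commensurable; for the cube, $\lambda = m^2\pi^2$.

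Since the 1D eigenspace is one-dimensional, $h_j(x_j) = C_j\cos(m\pi x_j)$ with $C_j$ a positive multiple of $c_{m\mathbf{e}_j}$. The endpoint constraints $C_j\geq 0$ and $(-1)^m C_j\geq 0$ force $C_j = 0$ when $m$ is odd (hence $\varphi\equiv 0$ by the previous argument), so $m$ must be even and $C_j > 0$ for every $j$. The 2D average $h_{ij} = \int\varphi\prod_{\ell\neq i,j}dx_\ell$ is then a 2D Neumann eigenfunction on the unit square with eigenvalue $m^2\pi^2$, nonnegative on the square's boundary; evaluating at midpoint boundary points such as $(1/m, 0)$ shows $C_i = C_j$, so all axial coefficients share a common value $C > 0$.

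The final contradiction uses $d\geq 3$. At the boundary point $p = (1/m, \ldots, 1/m, 0)\in\partial\Omega$, the axial contribution to $\varphi(p)$ is $C\bigl((d-1)\cos\pi + \cos 0\bigr) = (2-d)C$, which is strictly negative for $d\geq 3$ and $C>0$. Combined with $\varphi(p)\geq 0$ and analogous evaluations at related boundary points chosen to cancel the cross contributions (those from $\mathbf{k}$ with at least two nonzero entries), one forces $C = 0$, a contradiction. The main obstacle lies here: when $m^2$ admits many representations as $\sum_i k_i^2$ the eigenspace is high-dimensional, and a carefully organized system of boundary test points is needed to eliminate the cross contributions. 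The essential role of $d\geq 3$ is that the sign deficit $(2-d) < 0$ is unavailable for rectangles, consistent with the Remark's nonconstant nonnegative-boundary eigenfunction on the square.
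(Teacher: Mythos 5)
Your proposal shares the paper's opening moves -- cosine expansion, reduction to $\lambda=m^2\pi^2$ via a face-integration (cross-sectional-average) argument, and the observation that $m$ odd forces vanishing axial coefficients -- but it diverges from the paper's proof precisely at the step you flag as ``the main obstacle,'' and that obstacle is not a technicality: it is the whole theorem.

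You propose to evaluate $\varphi$ at a finite collection of boundary points such as $(1/m,\dots,1/m,0)$ and use nonnegativity to derive a contradiction from the negative axial contribution $(2-d)C$. There are two gaps here. First, your intermediate claim that $C_i = C_j$ (by evaluating the 2D average $h_{ij}$ at $(1/m,0)$) is not established, because $h_{ij}$ also carries cross terms from Pythagorean-type representations $k_i^2+k_j^2=m^2$ with $k_i,k_j>0$, and the evaluation gives $-C_i+C_j+(\text{cross terms})\ge 0$, not $C_i=C_j$. The paper sidesteps this entirely by first symmetrizing $\varphi$ over the full isometry group of the cube (the analogue of averaging over $S_d\ltimes\{\pm1\}^d$), so the coefficient of a mode depends only on the multiset $\{m_1,\dots,m_d\}$ and the axial coefficients are automatically equal. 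Second, and more fundamentally, your plan to ``cancel the cross contributions'' by evaluating at a clever finite set of boundary points is exactly the style of argument the paper uses for \emph{positive} boundary data (Proposition~\ref{lowdim}), and the paper explicitly observes it cannot work for \emph{nonnegative} boundary data: a finite linear combination of point evaluations of a nonnegative function can at best be shown to vanish, which does not contradict nonnegativity. What is needed -- and what the paper supplies -- is an \emph{integral} functional that vanishes on every cross term uniformly (independently of the unknown coefficients $c_{\mathbf k}$) and gives a tunable nonzero contribution from the axial terms.

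Concretely, the paper integrates $\varphi_\lambda$ over a union of small $(n-1)$-dimensional cubes centered at a uniform lattice $X^{n-1}$ on a face, with half-side $a$. The identity in \eqref{eq:cases} shows that the integral of every non-axial mode over this lattice is exactly zero, for \emph{any} $a$, while the two axial contributions are $(-1)^m(2am)^{n-1}$ and $(-1)^{m+1}\tfrac{1}{\pi}\sin(m\pi a)(2am)^{n-2}$. Choosing $a\in(0,1/m)$ so that $\pi a m=(n-1)\sin(\pi am)$ -- possible precisely when $n>2$ -- makes the total integral zero. Since a Neumann eigenfunction cannot vanish on an open subset of the boundary, $\varphi_\lambda$ must change sign there, which is the contradiction. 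The role of $d>2$ thus enters through the solvability of $x=(n-1)\sin x$, not through a pointwise sign deficit $(2-d)C$. Your approach lands in the same neighborhood but stops short of the central idea (replacing point evaluation by lattice integration), and without it the argument does not close.
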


Careful Finite Element computations suggest the following conjecture.
\begin{conjecture}
  Eigenfunctions which are nonnegative on the boundary of a tetrahedron and octahedron do not exist. However, eigenfunctions which are positive on the boundary exist on dodecahedron and icosahedron. 
\end{conjecture}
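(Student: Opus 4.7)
The plan is to split the conjecture into a nonexistence half (tetrahedron and octahedron) and an existence half (dodecahedron and icosahedron), and to attack each half by lifting the corresponding two-dimensional arguments used for the equilateral triangle (Theorem~\ref{thm:equilateral}) and the regular $n$-gons with $n\ge 5$ (Theorem~\ref{thm:regular}) to three dimensions via the symmetry group of the solid.

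For the tetrahedron and the octahedron, I would begin by using the full isometry group $G$ of the solid to decompose $L^2$ into isotypic components. Any eigenfunction $u$ which is nonnegative on $\partial\Omega$ can be symmetrized: averaging over $G$ gives a fully symmetric nonnegative-boundary eigenfunction whose restriction to the Schwarz fundamental tetrahedron satisfies Neumann conditions on the faces lying in reflection planes of $G$ and inherits nonnegativity on the face lying in $\partial\Omega$. One then studies, face-by-face on the fundamental domain, how the eigenvalue of such a restricted problem compares to the first mixed eigenvalue where the boundary face is replaced by a Dirichlet condition. In the equilateral triangle case this kind of symmetry-plus-monotonicity argument forces $u$ to be constant; for the tetrahedron and octahedron the same strategy should work because both solids admit enough reflection planes to reduce to a small simplicial fundamental domain. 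When $u$ does not lie in the trivial representation, I would argue it necessarily takes strictly negative values on some face by transforming it under an element of $G$ whose action permutes the faces nontrivially, contradicting nonnegativity on $\partial\Omega$.

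For the dodecahedron and icosahedron I would mirror the pentagon strategy. The goal is to exhibit a specific eigenfunction in the trivial representation of the icosahedral group whose restriction to a fundamental domain is strictly positive on the portion of its boundary lying in $\partial\Omega$. As in the pentagon case, closed-form eigenfunctions are unavailable, so the approach is validated numerical bounds: construct, via a learning procedure analogous to the one used for the regular pentagon, a nested sequence of subdomains of the Schwarz tetrahedron on which one can iteratively bound the relevant low eigenvalues, ultimately certifying both the eigenvalue location and the sign of the eigenfunction on the exterior face of the fundamental domain. Symmetry then extends the positivity to all of $\partial\Omega$.

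The principal obstacle, as in the two-dimensional setting, is the existence half, especially the transition from two-dimensional validated computations to three-dimensional ones: the interval enclosures for eigenfunction values on the boundary face must be tight enough to certify strict positivity, and the adaptive subdomain construction becomes more delicate because faces of the fundamental tetrahedron meet in edges with nontrivial dihedral angles. I expect that the nonexistence half for the octahedron will also require extra care beyond the tetrahedron: the octahedron's fundamental domain has a face which is part of $\partial\Omega$ meeting two mirror faces, and ruling out boundary-nonnegative eigenfunctions outside the trivial representation may need an argument more refined than a single application of symmetrization, perhaps combining antisymmetry under a suitable reflection with a Courant-type nodal count.
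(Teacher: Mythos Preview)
The statement you are attempting to prove is presented in the paper as an open \emph{conjecture}, not a theorem; the paper offers no proof and supports it only by ``careful Finite Element computations.'' There is therefore nothing in the paper to compare your proposal against, and your writeup is a research outline rather than a proof.

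That said, your outline has substantive gaps relative to what the paper's two-dimensional arguments actually rely on. For the nonexistence half, the equilateral triangle argument in \autoref{sec:equilateral} depends essentially on having \emph{explicit trigonometric formulae} for the symmetric modes (equation~\eqref{eq:symeigf}) and on number-theoretic constraints among the indices $(m,n)$; your symmetrization reduces to the fundamental Schwarz tetrahedron, but you then invoke a vague ``symmetry-plus-monotonicity'' comparison with a mixed eigenvalue, which is not how the triangle case works and does not obviously force constancy. You would need either an explicit eigenbasis on the tetrahedron/octahedron (available for the tetrahedron, less clear for the octahedron) and an analogue of the integral-over-subintervals trick of \autoref{sec:trieignonneg}, or a genuinely new argument. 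For the existence half, the pentagon proof does not certify positivity of an eigenfunction directly via interval enclosures of its values; it instead proves a \emph{nodal-line location} result (\autoref{thm:twolongestsides}) by iteratively excluding regions using validated lower bounds on mixed eigenvalues of auxiliary subdomains. Lifting this to three dimensions means controlling a nodal \emph{surface} inside a tetrahedral fundamental domain, and the monotonicity lemma (\autoref{le:monotonic}) that makes the 2D exclusion argument work has no obvious analogue. Your proposal correctly identifies the computational escalation but understates the structural difficulty: without a replacement for \autoref{partials} and \autoref{le:monotonic}, the exclusion-search strategy does not get off the ground.
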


The paper uses a variety of methods to handle the different cases. In particular, we use combinatorial and number theoretic results on cubes (\autoref{sec:cubes}) and equilateral triangles (\autoref{sec:equilateral}). We dissect regular polygons with $n\ge 5$ into congruent right triangles and study their Neumann eigenfunctions. For $n\ge6$ we can use existing results on the shape of the second Neumann eigenfunction to draw the necessary conclusions (\autoref{sec:nge6}).

The proof for the regular pentagon is rather unusual and the most interesting part of the paper. We prove that the nodal line for the second Neumann eigenfunction of a right triangle must connect two longest sides. This seemingly simple fact is extremely hard to prove. Similar results for obtuse triangles have been obtained by Atar and Burdzy \cite{AB02} using very sophisticated probabilistic techniques.

Our proof uses a generalization of the recently-described computable lower bounds for eigenvalue approximations \cite{CG14} in the setting of validated numerics, see \autoref{sec:NonconformingFiniteElements}. We find lower bounds for mixed eigenvalues of 20 complicated polygonal domains and use these in a iterative procedure to restrict the shape of the eigenfunction. The proof itself is human-readable and fully analytic (\autoref{sec:pentagon}), except for the matrix eigenvalue computations on 20 large matrices from \autoref{sec:matrixlowerbound} and \autoref{validatedcomputing}. We employ three different methods (intentional redundancy to ensure correctness, using different software packages) to find lower bounds for the smallest eigenvalues of these matrices: $LDL'$ decomposition with interval arithmetic, $LU$ decomposition using exact rational representations, and Hessenberg decomposition with interval arithmetic and Sturm sequences.

The 20 domains mentioned above were found using a non-validated learning algorithm quickly examining thousands of cases and producing a sequence of domains for the main validated numerical algorithm. Both procedures use a novel approach which iteratively constrains the location of the nodal line for the second Neumann eigenvalue in the original triangular domain, \autoref{sec:algorithm}. Our learning algorithm is not unique in that one could design a different strategy resulting in a different set of the domains, leading to a different validated proof.

\section{Definitions and auxiliary results}
The Neumann eigenvalue problem can be approached classically, by solving the partial differential equation
\begin{align*}
  \Delta u_n&=-\mu_n u_n\text{ in }D,\\
  \partial_\nu u_n&=0\text{ on }\partial D.
\end{align*}
However, it is often more useful to work with the variational weak formulation
\begin{align}
\label{eqn:Rayleigh-Ritz quotient}
  \mu_n=\inf_{
  \mbox{\tiny $\begin{matrix}
   S\subset W^{1,2}(D)\\\dim S=n 
 \end{matrix}$}
  } \sup_{u\in S} \frac{\int_D |\nabla u|^2}{\int_D u^2},
\end{align}
where $H^1(D)$ is the Sobolev space of all functions $u\in L^2(D)$ such that $\nabla u\in (L^2(D))^2$.  The right side of \eqref{eqn:Rayleigh-Ritz quotient} is commonly called the \emph{Rayleigh-Ritz} quotient.  In this context the minimizers of the Rayleigh quotient are the eigenfunctions. For Lipschitz domains (and even more general domains for which appropriate Sobolev embeddings exist) the two approaches lead to the same eigenvalues and the same eigenfunctions (via elliptic regularity considerations). For a broad overview on this topic see \cite{Ba80} and \cite{BB92}.

Note that the variational characterization lacks any obvious boundary conditions. This is a consequence of the Neumann (also called natural) boundary condition being automatically enforced by the Sobolev spaces. In contrast, to enforce Dirichlet boundary condition one seeks minimizers of the Rayleigh-Ritz quotient over a subset of $H^1(D)$ consisting of functions with zero trace on the appropriate part of the boundary of the domain.

In general it is true that
\begin{align*}
  0=\mu_1<\mu_2\le\mu_3\le\dots\le \mu_n\to \infty.
\end{align*}
However in some special cases one can show that $\mu_2$ is simple. In particular,
\begin{lemma}[{\cite[Theorem 1]{S13}}]\label{simple}
For non-equilateral triangles $\mu_2$ is simple.
\end{lemma}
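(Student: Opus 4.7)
My plan is to split the argument into two cases: isosceles (but non-equilateral) triangles, and scalene triangles.

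For an isosceles triangle $T$ that is not equilateral, I would exploit the reflection symmetry across the axis of symmetry $\ell$. Every Neumann eigenspace decomposes into a symmetric and an antisymmetric part with respect to this reflection, and the two parts are spectrally disjoint generically. Cutting $T$ along $\ell$ into a right half-triangle $T_{1/2}$, the symmetric eigenfunctions are in bijection with Neumann eigenfunctions of $T_{1/2}$ (with natural boundary condition on the cut), while the antisymmetric ones correspond to mixed eigenfunctions on $T_{1/2}$ that satisfy a Dirichlet condition on $\ell$ and Neumann conditions elsewhere. Listing the eigenvalues of $T$ in increasing order, $\mu_2(T)$ must equal either the second Neumann eigenvalue of $T_{1/2}$ or the first mixed eigenvalue of $T_{1/2}$, so simplicity of $\mu_2(T)$ reduces to the strict inequality between these two quantities. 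The key geometric input here is a monotonicity/comparison for the right half-triangle (e.g. comparing with right triangles of different aperture, or using Dirichlet--Neumann bracketing between the half-triangle and an appropriate rectangle) to rule out the coincidence except in the degenerate symmetric case, which corresponds precisely to the equilateral triangle.

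For scalene triangles I would use a continuity/deformation argument in the two-dimensional moduli space of triangle shapes. The eigenvalues $\mu_n$ depend real-analytically on the shape parameters, and multiplicity is upper semicontinuous, so the set where $\mu_2$ fails to be simple is closed. Picking a scalene shape, I would connect it by an analytic path to a non-equilateral isosceles shape (on which simplicity is already known from the first case), and analyze what happens to the (at most) double eigenvalue along this path. Using analytic perturbation theory (Kato/Rellich), if $\mu_2 = \mu_3$ at some point of the path, the corresponding two-parameter family of eigenfunctions can be chosen analytically and their nodal structure (each having exactly two nodal domains by Courant) forces rigid constraints. Under a generic one-parameter perturbation, one of the two eigenvalues must move strictly faster than the other via a Hadamard/Hellmann--Feynman computation, contradicting the hypothesized coincidence.

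The main obstacle is establishing the strict spectral separation in the isosceles case: bounds on the first mixed eigenvalue of the half-triangle and on the second Neumann eigenvalue of the half-triangle are typically close, and showing that they never coincide requires a careful geometric or variational comparison that uses the fact that the triangle is not equilateral (since for the equilateral case they do coincide, giving the known multiplicity two). The scalene case is conceptually cleaner once the isosceles case is in hand, though the analytic perturbation bookkeeping must rule out pathological tangential crossings of the two eigenvalue branches.
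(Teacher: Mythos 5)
This lemma is not proved in the paper at all; it is quoted verbatim as Theorem~1 of Siudeja's paper \cite{S13}, and the present paper takes it as a black-box input. So there is no ``paper's own proof'' to compare against here. What can be said is that your sketch does not follow the route Siudeja actually takes in \cite{S13}, and that your sketch, as written, has genuine gaps in both halves.

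In the isosceles case, your reduction to comparing the second Neumann eigenvalue $\mu_2(T_{1/2})$ of the half-triangle with the first mixed eigenvalue $\lambda_1(T_{1/2})$ is correct in outline, but you stop short of the heart of the matter. The lemma would follow if one could show (a) $\mu_2(T_{1/2}) \neq \lambda_1(T_{1/2})$, and, in the case $\mu_2(T_{1/2}) < \lambda_1(T_{1/2})$, (b) that $\mu_2(T_{1/2})$ is itself simple as a Neumann eigenvalue of the half-triangle. Point (a) is exactly the content you defer to a vague ``monotonicity/comparison'' remark; no actual inequality is produced. Worse, point (b) is a smaller instance of the very statement you are trying to prove, now applied to a (generically scalene) right triangle, so there is a circularity: the isosceles case invokes the scalene case for the half-triangle, while your scalene case is deduced from the isosceles case. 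One would have to break this cycle, e.g.\ by handling right triangles independently first.

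In the scalene case, the appeal to analytic perturbation theory is not enough. Upper semicontinuity of multiplicity tells you the degenerate set $\{\mu_2 = \mu_3\}$ is closed in moduli space, but it says nothing about that set being discrete or avoidable by a generic path; it could a priori be a curve passing through the triangle you care about. The Hadamard/Hellmann--Feynman argument you allude to, namely that at a putative double eigenvalue the two analytic branches must separate under a one-parameter deformation, is not automatic: the shape derivative of each branch is an integral of boundary data of the corresponding eigenfunction, and there is no reason offered why these two numbers cannot coincide for the particular perturbation direction chosen. The ``rigid constraints'' from Courant's theorem and nodal structure that you invoke are precisely where the real work lies, and they are not made concrete. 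For comparison, the actual proof in \cite{S13} does use nodal-line structure (restated here as \autoref{line}) and the no-two-orthogonal-antisymmetric-eigenfunctions lemma (\autoref{symmetric}) in an essential, topological way: one considers the rotating family $u_\theta = \cos\theta\, u + \sin\theta\, v$ inside a hypothetical two-dimensional eigenspace, tracks how the two endpoints of the nodal line move continuously on $\partial T$, and derives a contradiction when the nodal line is forced through a vertex (higher-order vanishing there produces extra nodal arcs, contradicting the two-nodal-domain count). That mechanism is absent from your sketch, and it is the piece that actually closes the argument.
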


We will also work with a mixed Dirichlet-Neumann eigenvalue problem:
\begin{align*}
  \Delta u_n&=-\lambda_n u_n\text{ in }D,\\
  u_n&=0\text{ on }B\subset \partial D,\\
  \partial_\nu u_n&=0\text{ on }\partial D\setminus B.
\end{align*}
Note that eigenfunctions satisfy Dirichlet boundary conditions on $B$, and the appropriate variational formulation must include the same restriction.
\begin{align*}
  \lambda_n=\inf_{\substack{
     S\subset H^1_B(D)\\\dim S=n }}
  \sup_{u\in S} \frac{\int_D |\nabla u|^2}{\int_D u^2},
\end{align*}
where $H^1_B(D)$ is a subspace of $H^1(D)$ consisting of all functions satisfying $u=0$ on $B$. If $meas(B)>0$, we have
\begin{align*}
  0<\lambda_1<\lambda_2\le\dots\le \lambda_n\to \infty.
\end{align*}

In what follows we need a few geometric results from \cite{S13}.
\begin{lemma}[ {\cite[Lemma 4]{S13}} ]\label{symmetric}
  Suppose $D$ is a domain with a line of symmetry. Then there cannot be two orthogonal antisymmetric eigenfunctions in the span of the eigenspaces of $\mu_2$ and $\mu_3$ (note that $\mu_2$ might equal $\mu_3$).
\end{lemma}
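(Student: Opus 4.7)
The plan is to apply the reflection principle and then contradict Courant's nodal domain theorem. Let $\sigma$ denote reflection across the line of symmetry $\ell$, write $D^+$ for one of the two halves of $D$, and denote by $L_1\le L_2\le\dots$ the eigenvalues of the mixed problem on $D^+$ with Dirichlet boundary condition on $\ell\cap\partial D^+$ and Neumann condition on $\partial D^+\cap\partial D$. The key standard correspondence is that restriction to $D^+$ gives an eigenvalue- and $L^2$-orthogonality-preserving bijection between antisymmetric Neumann eigenfunctions of $D$ and mixed eigenfunctions of $D^+$, with inverse given by antisymmetric extension.

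Suppose for contradiction that two $L^2(D)$-orthogonal antisymmetric Neumann eigenfunctions $u_1,u_2$ lie in the span of the $\mu_2$- and $\mu_3$-eigenspaces. Their restrictions to $D^+$ are two $L^2(D^+)$-orthogonal mixed eigenfunctions with eigenvalues in $\{\mu_2,\mu_3\}$, so the min--max characterization for the mixed problem forces $L_2\le\mu_3$. Since the first mixed eigenvalue is simple with a strictly positive ground state $w_1$ by the maximum principle, $L_1<L_2$, and any mixed eigenfunction $w_2$ of eigenvalue $L_2$ is $L^2$-orthogonal to $w_1$, hence sign-changing with at least two nodal domains in $D^+$; Courant's theorem applied to the mixed problem caps this count at two, so it is exactly two.

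Extending $w_2$ antisymmetrically yields a non-constant Neumann eigenfunction $\tilde w_2$ on $D$ of eigenvalue $L_2$ whose zero set contains the line $\ell$. Since no nodal domain on one side of $\ell$ can merge with one on the other, $\tilde w_2$ has exactly four nodal domains on $D$. On the other hand, $L_2\le\mu_3$ together with $L_2>0$ force $L_2\in\{\mu_2,\mu_3\}$, so Courant's theorem on $D$ bounds the number of nodal domains of $\tilde w_2$ by the smallest index $k$ with $\mu_k=L_2$, which is at most $3$. The inequality $4>3$ is the desired contradiction. The main subtlety is justifying the ``exactly two nodal domains'' count for $w_2$, which uses the strict positivity of the mixed ground state together with Courant in the mixed setting, and verifying that the antisymmetric extension lies in $H^1(D)$ and satisfies the Neumann weak formulation --- both standard consequences of $w_2|_\ell=0$ in the trace sense.
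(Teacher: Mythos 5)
Your argument is correct, and this is essentially the standard nodal--domain--counting proof of the cited result. The paper does not reproduce a proof (it cites \cite[Lemma 4]{S13}), but the chain you give — antisymmetric Neumann eigenfunctions on $D$ correspond to mixed Dirichlet--Neumann eigenfunctions on $D^+$; two orthogonal such eigenfunctions with eigenvalue $\le\mu_3$ force $L_2\le\mu_3$; $w_2$ has exactly two nodal domains in $D^+$ by positivity of the ground state and Courant; the antisymmetric extension $\tilde w_2$ then has exactly four nodal domains in $D$, while Courant on $D$ (using the smallest index $k$ with $\mu_k = L_2$, which is $\le 3$) caps it at $3$ — is precisely the intended route. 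One point you might state explicitly, since it carries the whole weight at the last step, is that you are invoking Courant's theorem in its ``smallest--index'' form (an eigenfunction for $\mu_n$ with $\mu_{n-1}<\mu_n$ has at most $n$ nodal domains, even when $\mu_n$ is degenerate); this is correct, but it is worth flagging because the naive ``$n$-th eigenfunction'' phrasing is ambiguous precisely in the case $\mu_2=\mu_3$ that the lemma explicitly allows. Your closing remarks about the $H^1$ regularity of the antisymmetric extension and the weak Neumann formulation are the right things to check and are indeed standard consequences of $w_2\vert_\ell=0$ in the trace sense.
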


\begin{lemma}[ {Special case of \cite[Lemma 5]{S13}} ]\label{line}
  The nodal line for the second Neumann eigenfunction on a triangle must start on one side and end on another side or vertex connecting the other two sides.
\end{lemma}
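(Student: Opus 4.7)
The plan is to combine Courant's nodal domain theorem with a reflection argument.

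By Courant's theorem for Neumann eigenfunctions, $u_2$ has exactly two nodal domains, separated by the nodal set $N=\{u_2=0\}$; standard interior regularity gives that $N$ is a finite union of smooth arcs, and since $T$ is simply connected and there are two nodal domains, $N$ is a single connected curve. I first rule out the case that $N$ is a closed loop strictly inside $T$: if it were, the interior region $\Omega_{\text{in}}\subsetneq T$ would make $u_2$ a positive first Dirichlet eigenfunction, giving $\mu_2=\lambda_1^D(\Omega_{\text{in}})>\lambda_1^D(T)>\mu_2(T)$ by strict domain monotonicity and the Filonov inequality $\lambda_1^D>\mu_2$, a contradiction. So $N$ is a simple arc with endpoints $P,Q$ on $\partial T$.

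I next assume for contradiction that $P$ and $Q$ lie in the relative interior of the same side $S$, so that $N$ together with the segment $\overline{PQ}\subset S$ bounds a ``lens'' nodal domain $\Omega_+$ (on which say $u_2>0$). Reflecting $\Omega_+$ across the line through $S$ and extending $u_2$ by even reflection (compatible with the Neumann BC on $S$) produces a region $D$ whose boundary is the closed Jordan curve $N\cup\tilde N$, on which the extended function is a positive first Dirichlet eigenfunction with eigenvalue $\mu_2(T)$. Hence $\lambda_1^D(D)=\mu_2(T)$. Since $D\subsetneq T\cup\tilde T$ strictly, domain monotonicity gives $\mu_2(T)>\lambda_1^D(T\cup\tilde T)=\lambda_1^{N/D}(T;S)$, the latter equality coming from the symmetry of the first Dirichlet eigenfunction on the doubled triangle (the mixed problem on $T$ has Neumann on $S$ and Dirichlet on the other two sides).

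The main obstacle is closing this to a contradiction by showing $\lambda_1^{N/D}(T;S)\ge\mu_2(T)$. This is not immediate from the variational principle for $\mu_2$, since the positive first mixed eigenfunction has nonzero mean on $T$ and so is not an admissible test function. I would try to construct a suitable two-dimensional test subspace containing the first mixed eigenfunction together with a correction chosen so that every nonzero element has zero mean and Rayleigh quotient at most $\lambda_1^{N/D}(T;S)$; alternatively, one can exploit the specific geometry of the triangle via a symmetrization or direct comparison argument. Dealing with this inequality cleanly, together with the minor extension needed when $P$ or $Q$ coincides with a vertex of $S$, is the technical heart of \cite[Lemma~5]{S13}.
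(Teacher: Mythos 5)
The paper does not contain its own proof of \autoref{line}: it is cited from Siudeja \cite[Lemma~5]{S13}, so there is nothing in-text to compare your argument against directly. I will therefore assess your proposal on its own terms.

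Your setup is sound. Courant's theorem forces exactly two nodal domains, and the regularity of the nodal set of a two-nodal-domain eigenfunction in a simply connected planar domain does give a simple arc or a simple closed curve. Your elimination of the closed-loop case via domain monotonicity plus $\lambda_1^D>\mu_2$ (Friedlander/Filonov) is correct. The even reflection of the ``lens'' $\Omega_+$ across $S$, the identification $\lambda_1^D(\Omega_+\cup\tilde\Omega_+)=\mu_2(T)$, and the chain $\mu_2(T)=\lambda_1^D(D)>\lambda_1^D(T\cup\tilde T)=\lambda_1^{N/D}(T;S)$ are all fine.

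The gap is exactly where you flag it: the reverse inequality $\lambda_1^{N/D}(T;S)\ge\mu_2(T)$. But your two proposed routes for closing it are unlikely to work as stated. The first mixed eigenfunction on $T$ (Neumann on $S$, Dirichlet on the other two sides) has nonzero mean, so it is not admissible in the $\mu_2$ variational characterization; a generic ``mean-zero correction'' in a two-dimensional subspace will not preserve the Rayleigh quotient bound, and no obvious symmetrization applies because $S$ is only one side, not a line of symmetry of $T$. In fact, for general plane domains this inequality is \emph{false} — e.g.\ for a square with Dirichlet on two adjacent sides and Neumann on the other two, $\lambda_1^{N/D}=\pi^2/2<\pi^2=\mu_2$ — so any proof must use that $T$ is a triangle. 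The cleanest way I see to repair the step is to avoid the mixed problem entirely: by Faber--Krahn,
\begin{align*}
\lambda_1^{N/D}(T;S)=\lambda_1^D(T\cup\tilde T)\ \ge\ \frac{\pi j_{0,1}^2}{2|T|},
\end{align*}
while Laugesen--Siudeja's sharp bound for triangles gives $\mu_2(T)\,|T|\le \tfrac{4\pi^2}{3\sqrt 3}$ (with equality only for the equilateral triangle). Since $\tfrac{4\pi^2}{3\sqrt 3}\approx 7.60<9.08\approx \tfrac{\pi j_{0,1}^2}{2}$, one obtains $\lambda_1^{N/D}(T;S)>\mu_2(T)$ unconditionally for every side $S$ of every triangle, and the contradiction closes. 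You would also need to remark that the same reflection argument goes through when one or both endpoints $P,Q$ of the nodal arc coincide with a vertex of $S$ (the lens still reflects to a proper subdomain of the doubled triangle). As written, with the key comparison inequality left as an acknowledged obstacle and a speculative plan to address it, the proof is incomplete; but the strategy is correct and the missing inequality is provable by the above route without circular reference to nodal-line structure.
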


Let us introduce the following naming convention for isosceles triangles: 
\begin{definition}
    A triangle is superequilateral (subequilateral) if it is isosceles with aperture angle larger (smaller) than $\pi/3$. 
\end{definition}
Extensive numerical studies suggest that \autoref{line} can be strengthened to:
\begin{conjecture}\label{con:nodal}
  The nodal line for the second Neumann eigenfunction ends in a vertex only for superequilateral triangles. For all other non-equilateral triangles the nodal line connects two longest sides.
\end{conjecture}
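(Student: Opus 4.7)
My plan is to combine a symmetry decomposition for isosceles triangles with a continuity argument for scalene triangles, leveraging the Atar--Burdzy results for obtuse triangles and this paper's right-triangle theorem as base cases.

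For the isosceles case, I would split Neumann eigenfunctions via the axis of symmetry into symmetric and antisymmetric pieces. Antisymmetric eigenfunctions, restricted to the half-triangle, solve a mixed Dirichlet--Neumann problem with Dirichlet data on the axis, while symmetric eigenfunctions restrict to a pure Neumann problem. Thus the second Neumann eigenvalue of the full isosceles triangle equals $\min(\lambda_1^{\mathrm{ND}}, \mu_2^{\mathrm{N}})$ of the half-triangle. I would then show, by monotonicity of these two eigenvalues in the aperture angle $\alpha$ together with the equality case at $\alpha=\pi/3$ (the equilateral triangle), that $\lambda_1^{\mathrm{ND}}<\mu_2^{\mathrm{N}}$ for superequilateral triangles and the reverse for subequilateral triangles. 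This immediately yields the nodal line structure: the axis of symmetry ending at the apex vertex when $\alpha>\pi/3$, and a curve transverse to the axis that touches both equal legs when $\alpha<\pi/3$.

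For scalene triangles, I would argue by continuity and contradiction. By \autoref{simple}, $\mu_2$ is simple on the open connected set of non-equilateral triangle shapes, so the normalized second eigenfunction and its nodal set depend continuously on shape. Suppose some non-superequilateral scalene triangle $T_0$ has its nodal line ending at a vertex $V$. I would build a continuous path in non-equilateral shape space from $T_0$ to a triangle where the conjecture is already verified: an isosceles configuration (the paragraph above), an obtuse scalene triangle (Atar--Burdzy \cite{AB02}), or a right triangle (this paper). Since ``the nodal line ends at vertex $V$'' is a closed condition on shape space, and because the endpoint of the nodal line cannot jump between a side and a vertex without passing through that vertex, some triangle on the path must retain a vertex endpoint, contradicting the corresponding base case. \autoref{symmetric} additionally rules out certain eigenfunction configurations with exactly two orthogonal antisymmetric modes, which prevents pathologies when the path crosses an isosceles stratum.

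The hard part is the local analysis near the transition where a nodal line endpoint approaches a vertex. Continuity of the eigenfunction is clear in the interior, but its boundary trace near a vertex is only real-analytic away from that vertex, so a local study is needed to show that the vertex-endpoint locus is genuinely a codimension-$\geq 1$ stratum of shape space. A helpful auxiliary device is to reflect the triangle across the angle bisector at $V$ and study the extended eigenfunction; a persistent vertex endpoint forces symmetry under this reflection, which is incompatible with the triangle being scalene and reduces the situation to the isosceles case already handled. Equilateral triangles must be excluded from the deformation (since $\mu_2$ is degenerate there), but the punctured shape space remains connected, so the path construction survives.
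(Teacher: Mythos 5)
The statement you are trying to prove is, in the paper, explicitly an \emph{open conjecture}: the authors state that numerical experiments suggest it, and they prove it only for a narrow class of right triangles (smallest angle near $\pi/5$) using a long validated-numerics argument, plus the subequilateral case $\alpha<\pi/6$ as a by-product of \autoref{thm:regular}. Your isosceles paragraph is essentially correct and is exactly what the paper does: the symmetric/antisymmetric splitting together with \cite[Theorems 3.1, 3.2]{LSminN}, Courant's theorem and \autoref{line} yields the conjecture for isosceles triangles. This part does not depend on the monotonicity-in-$\alpha$ claim you appeal to (which would itself require proof); the crossing behavior of $\lambda_1^{\mathrm{ND}}$ and $\mu_2^{\mathrm{N}}$ at $\alpha=\pi/3$ is precisely the content of the cited LSminN theorems.

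The scalene part is where the argument breaks down, and this is exactly the hard case the paper can only handle by brute force. Your continuity/contradiction step does not close. The set $S$ of non-equilateral triangle shapes whose second nodal line ends at a vertex is plausibly closed, but closedness alone gives nothing: $S$ is nonempty (it contains all superequilateral isosceles triangles) and its complement is nonempty (subequilateral isosceles triangles), so the open-and-closed argument would require openness of $S$, which is manifestly false (perturbing a superequilateral triangle off the isosceles axis should move the endpoint off the apex). The sentence ``some triangle on the path must retain a vertex endpoint, contradicting the corresponding base case'' is not a contradiction: a triangle in the interior of the path having a vertex endpoint does not force the base case to have one, and nothing prevents the endpoint from sliding continuously off the vertex partway along the path. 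The auxiliary device in your third paragraph is also unjustified: reflecting a scalene triangle across the angle bisector at $V$ does not map the triangle to itself, so a nodal line ending at $V$ does not force any symmetry of the eigenfunction, and the reduction to the isosceles case does not go through. In short, the local analysis near a vertex endpoint --- which you correctly identify as ``the hard part'' --- is precisely what is missing, and it is the reason the paper resorts to validated numerics for even a single angle; a soft continuity argument cannot substitute for it.
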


We prove this conjecture for some right triangles: namely, the ones whose smallest angle is approximately $\pi/5$. Note that an even stronger conjecture was posed by Atar and Burdzy \cite[Conjecture 3.2]{AB02} for obtuse triangles, where the nodal line would be confined in the right triangle bounded by two long sides and the altitude perpendicular to the longest side.

\newcommand{\regular}[1]{
\pgfmathsetmacro{\n}{#1}
\pgfmathsetmacro{\a}{360/\n}
\coordinate (a) at (0,0);
\foreach \x in {0,\a,...,360} 
{
   \fill[gray,opacity=0.3] ($(a)!0.5!(\x:1)$) -- (a) -- (0,0);
   \draw (a) -- (\x:1) coordinate (a) -- (0,0);
}
}

\newcommand{\regularlabelled}[1]{
\pgfmathsetmacro{\n}{#1}
\pgfmathsetmacro{\a}{360/\n}
\coordinate (a) at (0,0);
\foreach \x in {0,\a,...,360} 
{
   \fill[gray,opacity=0.3] ($(a)!0.5!(\x:1)$) -- (a) -- (0,0);
   \draw (a) -- (\x:1) coordinate (a) -- (0,0);
}
\draw (0:1) node [right=-2pt] {\tiny $D$};
\draw ($(0:1)!0.5!(\a:1)$) node [above right=-4pt] {\tiny $O$};
\draw (\a:1) node [above right=-3pt] {\tiny $B$};
\draw (0,-0.02) node [above right=-1pt] {\tiny $A$};
}

\section{Equilateral triangle: proof of \autoref{thm:equilateral}}\label{sec:equilateral}
The Neumann spectrum for an equilateral triangle can be split into symmetric modes $\varphi_{m,n}$ and antisymmetric modes $\psi_{m,n}$, forming eigenspaces of eigenvalues $\lambda_{m,n}$, $n\ge m\ge 0$. Note that $\lambda_{m,n}$ might be equal for different pairs of integers $m,n$. This means that there exist eigenfunctions of an equilateral triangle that combine many different modes. For more details see McCartin \cite{MC02}. 

Theorem 8.1 from \cite{MC02} states that the symmetric modes never vanish, while the antisymmetric modes degenerate when $m=n$. Furthermore, Theorem 8.2 from~\cite{MC02} gives that both types of modes are rotationally symmetric if and only if $m\equiv n\;(\!\!\!\!\mod 3)$. Finally, any symmetric mode that is not rotationally symmetric can be written as a sum of two rotated (by 120 and 240 degrees) antisymmetric modes. More precisely, denoting the rotations of $\psi_{m,n}$ by $\psi_{m,n,120}$ and $\psi_{m,n,240}$ we have $\varphi_{m,n}=\psi_{m,n,120} + \psi_{m,n,240}$.

Suppose $f$ is an eigenfunction for some $\lambda$ for an equilateral triangle with horizontal side $s_1$. Then $f$ is a linear combination of the symmetric and antisymmetric modes (with respect to the altitude $a_1$ perpendicular to $s_1$).  If $\varphi_{m,n}$ is symmetric but not rotationally symmetric, we rewrite it using two antisymmetric modes. Therefore
\begin{align*}
  f=\sum_{m,k\ge 0} a_{m,m+3k} \varphi_{m,m+3k} + \sum_{m\not\equiv n\!\!\!\!\!\pmod{3}} a_{m,n}(\psi_{m,n,120}+\psi_{m,n,240})+\sum_{m\ne n}b_{m,n}\psi_{m,n}.
\end{align*}

Suppose $f$ is nonnegative on the boundary; let $G$ be the group of isometries of the equilateral triangle. Then $f\circ U$, $U\in G$ is also nonnegative on the boundary. Furthermore, the orbit of $G$ on any antisymmetric mode $\psi$ has size $6$ (all possible rotations and reflections are different) or length $2$ (if $\psi$ is rotationally symmetric). At the same time, the sum of the reflections of $\psi$ along the line of antisymmetry, and in particular at the midpoints of the boundary edges, cancel out. Therefore the function 
\begin{align}\label{eq:generalform}
  F=\sum_{U\in G} f\circ U = 6 \sum_{m,k\ge 0} a_{m,m+3k} \varphi_{m,m+3k}.
\end{align}
is also nonnegative on the boundary. It follows that \autoref{thm:equilateral} needs to be proved only for eigenfunctions of the form $F$. 

Consider the equilateral triangle with vertices $(0,0)$, $(1,0)$ and $(1/2,\sqrt{3}/2)$. We have the following symmetric modes (see e.g. \cite{MC02})
\begin{align}
  \varphi_{m,n}(x,y)&=(-1)^{m+n} \cos  \left(\frac{1}{3}\pi  (2 x-1) (m-n)\right)\cos  \left(\frac2{\sqrt{3}} \pi (m+n)y\right)+\nonumber
  \\&\qquad+(-1)^m\cos\left(\frac{1}{3}\pi  (2 x-1) (m+2n)\right)\cos\left(\frac2{\sqrt{3}} \pi my\right)+\label{eq:symeigf}
  \\&\qquad+(-1)^n\cos\left(\frac{1}{3}\pi  (2 x-1) (2m+n)\right)\cos\left(\frac2{\sqrt{3}} \pi my\right),\nonumber
\end{align}
with corresponding eigenvalues
\begin{align*}
  \lambda_{m,n}=\frac{16\pi^2}{9}(m^2+mn+n^2).
\end{align*}
Note that an eigenvalue $\lambda$ might have a high multiplicity, since many different pairs of integers $(m,n)$ might produce the same value $\lambda$. Therefore more than one pair of integers might belong to a particular eigenvalue $\lambda$.

We find that the rotationally symmetric modes satisfy
\begin{align}\label{eq:trisymm}
    \varphi_{m,m+3k}(x,0)=\cos(2\pi kx)+\cos(2\pi(m+2k)x)+\cos(2\pi(m+k)x),\quad\text{on }[0,1].
\end{align}
Note that we only need to consider the values of $\varphi_{m,m+3k}$ on one side, so we assumed $y=0$. This last quantity integrates to $0$ on $(0,1)$, the side of the triangle, unless $k=0$. This means that a pair $(M,M)$ must belong to $\lambda$ if the eigenfunction associated to $\lambda$ has nonnegative boundary values. This implies that all other pairs $(m,n)$ for the same $\lambda$ satisfy $3M^2=m^2+mn+n^2$.

\subsection{Eigenfunctions positive on the boundary.}
\label{sec:trieigpositive}

Note that if both $m$ and $n$ are even, then $m^2+mn+n^2$ is even, otherwise it is odd. Therefore an even $M$ implies even $m, n$. By induction, any pair $(m,n)$ belonging to eigenvalue $\lambda$ must have both $m$ and $n$ divisible by $2^s$ and at least one not divisible by $2^{s+1}$, whenever $4^s$ divides $M^2$, but $2\cdot 4^s$ does not. Hence there exists $s$ such that for any pair $(m,m+3k)$ that belongs to $\lambda$ we must have $m=2^s m_1$ and $k=2^s k_1$, where at most one of the $k_1$ and $m_1$ is even. Therefore
\begin{align*}
  \varphi_{m,m+3k}(2^{-s-1},0)&=\cos(\pi k_1)+\cos(\pi(m_1+2k_1))+\cos(\pi(m_1+k_1))=
  \\&=(-1)^{k_1}+(-1)^{m_1}+(-1)^{m_1+k_1}=-1,\\
  \varphi_{m,m+3k}(0,0)&=3.
\end{align*}
We have proved that all $\varphi_{m,m+3k}$ that belongs to $\lambda$ must equal $-1$ at the same point on the boundary (and $3$ at vertices). Hence any linear combination of such eigenfunctions with $\sum_{m,k} a_{m,m+3k}\ne 0$ in $F$ must change sign. Also, unconditionally the eigenfunction cannot be strictly positive. 

This argument is very similar to the one used by Hoffmann-Ostenhof \cite{HO13} on squares to prove nonexistence of eigenfunctions positive on the boundary. It is however impossible rule out the existence of eigenfunctions nonnegative on the boundary using this method. We might be able to prove that the linear combination must equal $0$ at many points, but not that it changes sign. 

\subsection{Eigenfunctions nonnegative on the boundary.}
\label{sec:trieignonneg}
Here we develop an improved method based on the fact that Neumann eigenfunction cannot vanish on an open subset of the boundary (it would satisfy Dirichlet condition). Therefore, if we find an open set on which eigenfunction integrates to $0$, it must change sign on that set.

We ony need to work with $\lambda = \frac{16\pi^2}{9} 3M^2$ ($m=M$, $k=0$ is admissible in \eqref{eq:trisymm}, see comment below that formula). It is also possible that other pairs $(m,k)$ with $k>0$ give the same $\lambda$. In that case we have
\begin{align*}
    3M^2 = 3m^2+3mk+k^2.
\end{align*}
It is easy to check that 
\begin{align}\label{eq:mkineq}
M<m+k<m+2k<2M, \qquad 0<k<M.
\end{align}

Consider points 
\begin{align*}
    x_i = \frac{2i+1}{2M},\qquad 0\le i<M,
\end{align*}
and integrals over symmetric intervals around these points
\begin{align*}
    \sum_{i=0}^{M-1} \int_{x_i+a}^{x_i-a} \cos(\alpha x) \,dx = \frac{2}{\alpha} \sin(\alpha a) \sum_{i=0}^{M-1} \cos(\alpha x_i)=
    \frac{2}{\alpha}\sin(\alpha a)\cos(\alpha/2)\sin(\alpha/2)\csc(\alpha/2M)
\end{align*}
by \cite[Section 1.341, Formula 3]{GR00}, as long as $\sin(\alpha/2M)\ne 0$.

Examining formula \eqref{eq:trisymm} we find $\alpha=2\pi k$, $2\pi(m+k)$ and $2\pi(m+2k)$. In each case $\sin(\alpha/2M)\ne 0$ due to \eqref{eq:mkineq}. Furthermore $\sin(\alpha/2)=0$, hence the eigenfunctions $\varphi_{m,m+3k}$ with $k>0$ integrate to $0$ over the union of $(x_i-a,x_i+a)$. We only need to show the same property for for the eigenfunction with $k=0$:
\begin{align}
    \varphi_{M,M}(x,0) = 1+2\cos(2\pi Mx). \label{MMmode}
\end{align}
We have
\begin{align*}
    \sum_{i=0}^{M-1} \int_{x_i+a}^{x_i-a} \varphi_{M,M}(x,0) \,dx &= \sum_{i=0}^{M-1} \left(2a + \frac{2}{\pi M} \sin(2\pi M a)\cos((2i+1)\pi)  \right)
    \\&= 2aM -\frac{2}{\pi} \sin(2\pi M a).
\end{align*}
Let $z_0=2\pi M a_0$ and find a positive solution of $z_0=2\sin(z_0)$. We get $z_0<\pi$ and $a_0<1/2M$. Hence intervals $(x_i-a_0,x_i+a_0)$ fit inside $(0,1)$, the side of the equilateral triangle. At the same time, any linear combination of eigenfunctions from \eqref{eq:trisymm} integrates to $0$ over the union of these intervals. Hence it must change sign, as it cannot satisfy both Dirichelt and Neumann conditon on any interval (be identically $0$ on any interval).

\section{Cubes: proof of \autoref{thm:cubes}.}\label{sec:cubes}

\subsection{Fully symmetric eigenfunctions}

Consider the cube $C=[-1,1]^n$. Suppose it has a Neumann eigenfunction that is positive (nonnegative) on the boundary. We can symmetrize this eigenfunction by applying all isometries of the cube and summing the resulting eigenfunctions (as in the equilateral triangle case). We will get a new eigenfunction that is positive (nonnegative) on the boundary, symmetric with respect to $x_i=0$ for any $i$ and invariant under arbitrary permutation of variables $x_i$. We only need to prove that this fully symmetric eigenfunction cannot be positive (nonnegative) on the boundary.

Any symmetric eigenfunction can be written as a sum of simple eigenfunctions of the form
\begin{align*}
  (-1)^{\sum m_i}\prod_{i=1}^n \cos(m_i\pi x_i).
\end{align*}
The factor $(-1)^{\sum m_i}$ ensures positivity in all vertices ($x_i=\pm1$). Invariance under permutations of variables gives

\begin{align}\label{geneig}
  \varphi_\lambda(x)=\sum_{\stackrel{\sum m_i^2=\lambda}{M=\{m_1\le\dots\le m_n\}}} a_M(-1)^{\sum m_i}\sum_{\sigma_n} 
  \prod_{i=1}^n \cos(m_{\sigma_n(i)}\pi x_i),
\end{align}
where $\sigma_n$ denotes any permutation of $\{1,\dots,n\}$ and $a_M$ are arbitrary coefficients depending on the nondecreasing sequence of nonnegative integers $m_i$. We require that $\sum m_i^2=\lambda$ to ensure all terms belong to the same eigenvalue.  Formula \eqref{geneig} gives the most general form of the eigenfunction that is invariant under the group of the isometries of the cube $C$.
We need to show that this eigenfunction is negative somewhere on the boundary of the cube, regardless of the choice of $\lambda$. Due to symmetry we only need to consider one face. 

Note also, that $\varphi_\lambda$ is also a linear combination of eigenfunctions of the lower dimensional Laplacian on a face. Indeed, fixing $x_1=1$ gives a sum of products of cosines, hence again a symmetric function. However, due the presence of the permutations $\sigma_n$, we drop different $m_i$ in different terms, and hence we get a sum of eigenfunctions for various eigenvalues. Every non-constant Neumann eigenfunction is orthogonal to the constant eigenfunction. Hence $\varphi_\lambda$ integrates to $0$ on each face, unless an eigenfunction which is constant on the face is a part of $\varphi_\lambda$, cf. the discussion below \eqref{eq:trisymm} pertaining to equilateral triangles.

Therefore the sequence $m_1=\dots=m_{n-1}=0$, $m_n=m=\sqrt{\lambda}$ gives one of the terms in $\varphi_\lambda$. Consequently, $\lambda=m^2$ for some integer $m$. Otherwise $\varphi_\lambda$ integrates to $0$ over any face, hence it must change sign on each face.

\subsection{Positive eigenfunctions}
We begin with a special case to illustrate the approach. Suppose $\lambda=m^2$ with odd $m$. Recall that $\varphi_\lambda$ is a sum over all sequences $m_1\le m_2\le \dots \le m_n$ such that 
\begin{align*}
m_1^2+\dots+m_n^2=\lambda=m^2. 
\end{align*}
Hence at least one $m_i$ is odd. Consider a discrete set of points:
\begin{align*}
  X=\{(x_1\ge x_2\ge\dots\ge x_n):x_i\in\{0,1\}\}.
\end{align*}

These points correspond to the center of the cube $(0,\dots,0)$, the center of the face $(1,0,\dots,0)$, the centers of all lower dimensional faces, finally a vertex $(1,\dots,1)$. Let $X_0$ be the set
\begin{align*}
  X_0=\{(1=x_1\ge x_2\ge\dots\ge x_n=0):x_i\in\{0,1\}\}.
\end{align*}
Note that all points in $X_0$ are on one face of the cube. For any $x\in X$ put $k=\sum_{i=1}^n x_i$ (the codimension of the face for which $x$ is a center).  Observe that
\begin{align*}
  \sum_{x\in X_0} \frac{1}{(n-\sum x_i)!}\varphi_\lambda(x)&
  =\sum_{k=1}^{n-1} \sum_{\stackrel{\sum m_i^2=\lambda}{M=\{m_1\le\dots\le m_n\}}} a_M(-1)^{\sum m_i}\frac{1}{(n-k)!}\sum_{\sigma_n} 
  \prod_{i=1}^k \cos(m_{\sigma_n(i)}\pi).
  \\&=\sum_{k=1}^{n-1} \sum_{\stackrel{\sum m_i^2=\lambda}{M=\{m_1\le\dots\le m_n\}}} a_M(-1)^{\sum m_i}\frac{1}{(n-k)!}\sum_{\sigma_n} 
  (-1)^{\sum_{i=1}^k m_{\sigma_n(i)}}.
  \\&=\sum_{\stackrel{\sum m_i^2=\lambda}{M=\{m_1\le\dots\le m_n\}}} a_M(-1)^{\sum m_i}\sum_{k=1}^{n-1} \frac{1}{(n-k)!}\sum_{\sigma_n} 
  (-1)^{\sum_{i=1}^k m_{\sigma_n(i)}}.
\end{align*}
Note that in the innermost sum each term appears exactly $(n-k)!$ times, since we are using only the first $k$ values of each $\sigma_n$. Hence we are adding (exactly once) all products of $(-1)^{m_i}$, except for the full and empty product, so we may rewrite this as 
\begin{align*}
  \sum_{x\in X_0} \frac{1}{(n-\sum x_i)!}\varphi_\lambda(x)&
  =\sum_{\stackrel{\sum m_i^2=\lambda}{M=\{m_1\le\dots\le m_n\}}} a_M(-1)^{\sum m_i}
  \left[\prod_{i=1}^n (1+(-1)^{m_i}) -(-1)^{\sum m_i}-1\right].
\end{align*}
But at least one $m_i$ is odd, hence the product in the bracket is $0$. Furthermore, the sum of $m_i$ is also odd, hence the whole bracket is $0$. Thus

\begin{align*}
  \sum_{x\in X_0} \frac{1}{(n-\sum x_i)!}\varphi_\lambda(x)=0.
\end{align*}

Therefore either $\varphi_\lambda$ is $0$ at the centers of faces of arbitrary dimension, or $\varphi_\lambda$ must change sign. To prove the eigenfunction must change sign we will use a different method, similar to the one for equilateral triangles (\autoref{sec:trieignonneg}).  For the moment, we can show that eigenfunction cannot be positive on the boundary for a few low-dimensional cases with an argument about the parity of the $m_i$.

\begin{proposition}\label{lowdim}
  In dimensions 2, 3 and 4, any positive Neumann eigenfunction on a cube must be constant.
\end{proposition}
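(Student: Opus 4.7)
The strategy is to extend the weighted-sum argument preceding this proposition, which settled the case $m$ odd. The remaining task is to rule out strict positivity when $\lambda = m^2$ with $m$ even. First I would perform a parity analysis on $\sum m_i^2 = m^2$ modulo $4$. In $d=2$ or $d=3$, this forces every multi-index $M = (m_1, \ldots, m_n)$ appearing in $\varphi_\lambda$ to have all entries even. In $d=4$, a slightly finer analysis (modulo $8$) shows that either all $m_i$ are even, or all $m_i$ are odd; the latter option is available only when $m \equiv 2 \pmod 4$.

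Next I would handle the ``all $m_i$ even'' scenario by a rescaling reduction. Since all $m_i$ are even, the factors $\cos(m_i \pi x_i)$ are $1$-periodic in $x_i$, so $\varphi_\lambda$ is $1$-periodic in each coordinate and hence strictly positive on $\partial[0,1]^n$ (the interior faces $x_i=0$ inherit positivity from $x_i=1$). The affine substitution $z_i = 2x_i - 1$, combined with $\cos(m_i \pi (z_i+1)/2) = (-1)^{m_i/2} \cos((m_i/2) \pi z_i)$, shows that $\tilde\varphi(z) := \varphi_\lambda((z+1)/2)$ is a fully symmetric Neumann eigenfunction on $[-1,1]^n$ of the form \eqref{geneig} with parameter $m/2$, and is strictly positive on its boundary. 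Iterating strictly decreases the $2$-adic valuation of $m$, so after finitely many steps we reach either an odd parameter (handled by the preceding weighted-sum argument) or, in $d=4$, a parameter congruent to $2 \pmod 4$.

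The remaining case in $d=4$ is handled by evaluation at the boundary point $p=(1,1/2,1/2,0)$. For any $M$ with all $m_i$ odd, the factor $\cos(m_{\sigma(j)} \pi/2) = 0$ for $j \in \{2,3\}$ and every $\sigma$, so each summand vanishes. For $M$ with all $m_i$ even, write $m_i = 2m'_i$; since $\sum (m'_i)^2 = (m/2)^2$ is odd, exactly one or three of the $m'_i$ are odd, whence $S := \sum_i (-1)^{m'_i} = \pm 2$ and $S^2 = 4 = n$. A direct calculation yields
\begin{align*}
\sum_\sigma \prod_i \cos(m_{\sigma(i)} \pi p_i) = (-1)^{\sum m'_i} \cdot 2\bigl(S^2 - n\bigr) = 0,
\end{align*}
so every term in $\varphi_\lambda(p)$ vanishes, contradicting strict positivity on $\partial C$.

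The main obstacle is precisely the $d=4$ analysis: the rescaling cannot eliminate the ``all odd'' contributions, and the identity $S^2 = n$ that closes the argument is a numerical coincidence peculiar to $n=4$. This explains why the same technique does not extend to higher dimensions, and why the full \autoref{thm:cubes} will need the more refined integration method analogous to the one used in \autoref{sec:trieignonneg}.
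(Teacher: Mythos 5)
Your proof is correct, and it follows a genuinely different route from the paper's. The paper proves this by applying the weighted sum
\begin{align*}
\sum_{x\in X_h}\frac{1}{(n-\sum 1_{x_i}(1))!}\varphi_\lambda(x)=\sum_{M} a_M\left[\prod_{i=1}^n\bigl(\cos(m_i\pi h)+(-1)^{m_i}\bigr)-1-\prod_i\cos(m_i\pi h)\right]
\end{align*}
and then, for each dimension, choosing $h=1/2^s$ based on a divisibility analysis of the $m_i$ so that the bracket vanishes for every admissible $M$; in $d=4$ this leads to a case split on $m\pmod 4$ with a recursive reduction for $4\mid m$. You instead absorb the divisibility bookkeeping into a rescaling: you observe that when every $M$ has all entries even (forced for $d\in\{2,3\}$ and for $d=4$ with $4\mid m$, by the $\bmod\,4$ and $\bmod\,8$ analysis), the eigenfunction is $1$-periodic and the substitution $z=2x-1$ produces a new fully-symmetric eigenfunction of the form \eqref{geneig} with parameter $m/2$, still strictly positive on the boundary; iterating strictly decreases the $2$-adic valuation until $m$ is odd or (in $d=4$) $m\equiv 2\pmod 4$. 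The latter case is then dispatched by a clean single-point evaluation at $p=(1,1/2,1/2,0)$: all-odd $M$'s die because $\cos(m_{\sigma(j)}\pi/2)=0$ for $j\in\{2,3\}$, and all-even $M$'s die because $\sum_\sigma\prod_i\cos(m_{\sigma(i)}\pi p_i)=4\sum_{a<b}(-1)^{m_a'+m_b'}=2(S^2-4)=0$ when $S=\sum_i(-1)^{m'_i}=\pm 2$. Your rescaling argument is arguably tidier than the paper's recursion (the paper's claim that $4\mid m$ forces $4\mid m_i$ in $d=4$ is in fact not quite right, e.g.\ $m=4$ admits $M=(2,2,2,2)$; the fix is to reduce by $2$ rather than $4$, which your argument does automatically), and the single point $p$ replaces a weighted sum over a whole family of points. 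Two small slips, neither of which affects the conclusion: the prefactor $(-1)^{\sum m_i'}$ in your displayed formula is superfluous — the correct evaluation of $\sum_\sigma\prod_i\cos(m_{\sigma(i)}\pi p_i)$ is just $2(S^2-n)$ with no sign — and your parity claim ``exactly one or three of the $m_i'$ are odd'' is correct at the level you need ($S=\pm 2$), though a $\bmod\,8$ check in fact rules out the ``three'' case.
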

\begin{remark}
  Dimension 2 was proved by Hoffman-Ostenhof \cite{HO13}. 
\end{remark}
\begin{proof}
  We only need to consider even $m$.
For $0<h<1$ define   
\begin{align*}
  X_h=\{(1=x_1\ge x_2\ge\dots\ge x_n=h):x_i\in\{h,1\}\}.
\end{align*}
As above we get
\begin{align*}
    \sum_{x\in X_h} &\frac{1}{(n-\sum 1_{x_i}(1))!}\varphi_\lambda(x)
    \\&\qquad\qquad=\sum_{\stackrel{\sum m_i^2=\lambda}{M=\{m_1\le\dots\le m_n\}}} a_M
  \left[\prod_{i=1}^n (\cos(m_i\pi h)+(-1)^{m_i}) -1-\prod_i \cos(m_i\pi h)\right],
\end{align*}

We now consider each dimension separately: 
\begin{itemize}
    \item \textit{Dimension 2:}

The sum of the squares of two odd numbers is congruent to $2$ modulo $4$, hence it is not a square. Therefore, if $m$ is even, then both $m_i$ are even. Furthermore, by induction $2^s$ divides both $m_i$, but $2^{s+1}$ divides exactly one of them. Take $h=1/2^s$. Then $\cos(m_i\pi/2^s)$ have both signs. But both $m_i$ are even, hence the first product in the bracket is $0$, and the second product equals $-1$. Hence the whole bracket equals $0$.

\item \textit{Dimension 3:}

  The sum of three squares is again a square only if all numbers are even. Indeed, with two odd numbers, the sum that is congruent to $2$ modulo $4$. By induction, there exists $s$ such that $2^s$ divides all $m_i$, while $2^{s+1}$ divides none or two. In either case, the first product equals $0$, and the second equals $-1$. Hence the bracket is again $0$.

\item \textit{Dimension 4:}
  
The sum of $k$ odd squares is congruent to $k$ modulo $8$. Hence only $1$ or $4$ odd squares can give a square. Suppose some $m_i$ are odd. Since $m$ is even, all $m_i$ must be odd and $4$ does not divide $m$. Since all $m_i$ are odd, $\cos(m_i\pi/2)=0$ and the first product equals $1$. The second product is obviously $0$ and the bracket is again $0$. If all $m_i$ are even, but $4$ does not divide $m$, then exactly one of the $m_i/2$ is odd. Therefore the first product is $0$ and the second equals $-1$. Again the bracket is $0$. Finally, suppose $4$ divides $m$. Then $4$ divides all $m_i$, and we can reduce the problem to $m'=m/4$ and apply the same argument recursively.
\end{itemize}
\end{proof}

\begin{remark}
  In dimension 5 we have $36=6^2=4\times 3^2$. The first decomposition does give $0$ in the bracket. However the second gives $1$.

  In dimension 6 we have $36=6^2=2\times 4^2+4\times 1^2=5^2+2\times 2^2+3\times 1^2=5^2+3^2+2\times1^2$. Hence in dimensions 6 and higher, any integer smaller than $m$ may appear in the decomposition for $m^2$. Therefore an argument based on divisibility will most likely fail.
\end{remark}

\subsection{Nonnegative eigenfunctions}
To prove \autoref{thm:cubes} we will generalize the approach used on equilateral triangles in \autoref{sec:trieignonneg}. We will show that $\varphi_\lambda$ integrates to $0$ over a union of small cubes with codimension one contained in one of the faces. Since an eigenfunction cannot equal $0$ on an open subset of the boundary (it already satisfies the Neumann condition there), it must change sign in the union of these cubes. Note also that it is irrelevant if these cubes are disjoint, but they must be subsets of the face.

Suppose that $\lambda=m^2$ and consider the following set of points uniformly distributed on $(-1,1)$.
\begin{align*}
  X= \left\{x_k=1-\frac{2k+1}{m}:\;k=0,\dots,m-1\right\}
\end{align*}
By \cite[Section 1.341, Formula 3]{GR00}
 \begin{align*}
   \sum_{k=0}^{m-1} \cos(l\pi x_k)=
   \begin{cases}
     0& 0<l<m,\\
     m (-1)^{m+1}&l=m.
   \end{cases}
 \end{align*}
 Now take a lattice of cubes with centers on $X^{n-1}$ and side length $2a$. That is 
   \begin{align*}
     \mathcal L=\left\{ C_x=\{y:y_n=1, \max|x_i-y_i|\le a\}: x\in X^{n-1} \right\}
   \end{align*}
 Note that all cubes $C_x$ are on one face of $[-1,1]^n$ if $a<1/m$. 
 
 Consider one sequence $m_i$ and one permutation in the definition of $\varphi_\lambda$. The integral over the lattice of the resulting function equals
 \begin{align}
   \sum_{C_x\in \mathcal L} &\int_{C_x} \cos(m_{\sigma(n)} \pi) \prod_{i=1}^{n-1} \cos(m_{\sigma(i)} \pi z_i)dz_1\dots dz_{n-1}=\nonumber
   \\&=
   \cos(m_{\sigma(n)}\pi)\prod_{i=1}^{n-1} \sum_{k=0}^{m-1} \int_{x_k-a}^{x_k+a} \cos(m_{\sigma(i)}\pi z_i) dz_i=\nonumber
   \\&=
   \cos(m_{\sigma(n)}\pi)\prod_{i=1}^{n-1} \sum_{k=0}^{m-1} \frac{2}{m_{\sigma(i)}\pi}\sin(m_{\sigma(i)}\pi a)\cos(m_{\sigma(i)}\pi x_k)=\label{sinx/x}
   \\&=
   \begin{cases}\label{eq:cases}
     0,& (m_1,\dots,m_n)\ne (0,\dots,0,m),\\
     (-1)^m (2am)^{n-1},& m_{\sigma(n)}=m,\\
     \frac1\pi\sin(m\pi a)(-1)^{m+1}(2am)^{n-2},& m_{\sigma(i)}=m \text{ for some } i<n.
   \end{cases}
 \end{align}
 Note that in \eqref{sinx/x} we mean $\frac{\sin x}{x}=1$ if $x=0$. The top case in \eqref{eq:cases} is equivalent to $k>0$ in \autoref{sec:trieignonneg}, while the other two cases correspond to the integral from \eqref{MMmode}.

Hence
\begin{align*}
    \sum_{C_x\in \mathcal L} \int_{C_x} \varphi_\lambda(z) dz &= 2a_{0,\dots,0,m} (2am)^{n-2}\left(\sum_{\sigma(n)=n}am-\sum_{\sigma(n)\ne n} \frac{1}{\pi}\sin(m\pi a)\right)=
  \\&=
2a_{0,\dots,0,m} (2am)^{n-2}(n-1)!\left(am-\frac{n-1}{\pi}\sin(m\pi a)\right).
\end{align*}
The last expression equals $0$ if we choose $0<a<1/m$ so that 
\begin{align*}
  \pi a m = (n-1)\sin(\pi a m).
\end{align*}
The existence of such $a$ is equivalent to the existence of $0<x<\pi$ such that
\begin{align}\label{sineq}
  x=(n-1)\sin x.
\end{align}
This equation has a positive solution when $n>2$. This proves that in dimensions $n>2$ any eigenfunction of a cube must change sign on the boundary. However this argument fails in dimension 2, and \autoref{lowdim} (or the earlier result~\cite{HO13} by Hoffman-Ostenhof) is the best we can expect. It is remarkable that \eqref{sineq} is exactly the same as the equation for $a$ the the equilateral case (perhaps hinting at the fact that the equilateral triangle can be embedded in a cube as an intersection of that cube with a plane).

\subsection{General boxes}
Consider an $n$-dimensional box with sides of length $2a_i$ centered at the origin. The eigenvalues $\lambda$ for this box can be indexed using a sequence $L$ of $n$ natural numbers $l_i$ such that 
\begin{equation}
\label{eigenvalue as sum over L}
  \lambda=\frac{\pi^2}{4}\sum_{i=1}^n \frac{l_i^2}{a_i^2}.
\end{equation}
The complete set of eigenfunctions is given by
\begin{align*}
  \varphi(x)=\prod_{i=1}^n F(l_i \pi x_i/a_i),
\end{align*}
where $F$ is either sine or cosine. However, any eigenfunction that is nonnegative on the boundary can be axially symmetrized by summing over all sign changes for all coordinates. This procedure still gives a nonnegative boundary and eliminates all occurrences of sine. Therefore we can assume that $F(x)=\cos x$.

Any eigenfunction of a box, when restricted to a face, is also a sum of eigenfunctions on each face (put $x_i=a_i$ for some $i$). This lower dimensional combination of eigenfunctions consists of eigenfunctions that are orthogonal to a constant eigenfunction (that is, they integrate to $0$ over the face), and/or a constant term. If the constant term is not present, the linear combination must change sign on the face. 
Therefore, an eigenfunction that is nonnegative on the boundary must have a constant term when restricted to any face. Hence, its eigenvalue must admit indexing sequences $L_j=\{l_i=\delta_j(i)l_j\}$.  Taking $L = L_j$ in Equation~\eqref{eigenvalue as sum over L} thus yields
\begin{align*}
  \lambda=\frac{\pi^2}{4}\frac{l_1^2}{a_1^2}=\frac{\pi^2}{4}\frac{l_2^2}{a_2^2}=\dots=\frac{\pi^2}{4}\frac{l_n^2}{a_n^2}.
\end{align*}

This immediately proves that if any ratio of two squares of the side lengths is not the square of a rational number, then nonnegative eigenfunctions do not exist. 

For any $i\ne j$ we have
\begin{align*}
  \frac{a_i^2}{a_j^2}=\frac{l_i^2}{l_j^2}.
\end{align*}
Hence $a_i/a_j$ is also rational for any $i\ne j$. Therefore $a_i=r_i\alpha$ for some rational $r_i$ and real $\alpha$, and this box can be used to tile a cube. Then any eigenfunction positive on the boundary of this box gives an eigenfunction on a cube with the same property (thanks to Neumann boundary matching in the tiling). But we proved these do not exist. Therefore \autoref{thm:cubes} also holds for arbitrary boxes.

\section{Proof of \autoref{thm:regular} for $n\ge 6$}\label{sec:nge6}
    For a regular hexagon we can simply take the symmetric mode $\varphi_{0,1}$ of the equilateral triangle (defined in \eqref{eq:symeigf}) and cover the hexagon with its reflections to get an eigenfunction which is positive on the boundary.

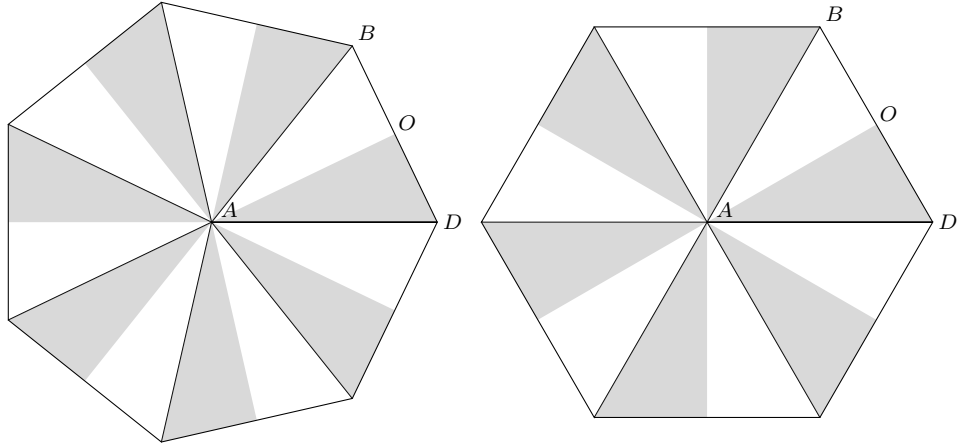
\begin{figure}[t]
  \begin{center}
\begin{tikzpicture}[scale=3,baseline=0]
  \regularlabelled{7}
\end{tikzpicture}
\begin{tikzpicture}[scale=3,baseline=0]
  \regularlabelled{6}
\end{tikzpicture}
  \end{center}
  \caption{Regular heptagon decomposed into \textbf{subequilateral} triangles, and regular hexagon decomposed into equilateral triangles.}
  \label{fig:seven}
\end{figure}

    Now consider a regular polygon with $n$ sides, where $n>6$.  Such a polygon can be decomposed into $n$ subequilateral triangles ($ABD$ on \autoref{fig:seven}). The second Neumann eigenvalue of a subequilateral triangle is simple (\autoref{simple}) and the second Neumann eigenfunction is symmetric \cite[Theorem 3.1]{LSminN}. Hence it is also the second eigenfunction on the right triangle formed by cutting the isosceles triangle in half ($ABO$ and $ADO$ on \autoref{fig:seven}).

    The second Neumann eigenfunction must have exactly 2 nodal domains, by Courant's nodal domain theorem (see e.g. \cite[Sec. V.5, VI.6]{CH1}). By symmetry, the nodal line must either connect the two long sides ($AB$ and $AD$) of the subequilateral triangle, or start and end on the short side ($BD$). From \autoref{line}, the second case is not possible, regardless of the shape of the triangle. Hence this eigenfunction is positive on the short side, and it can be reflected $n$ times inside of the regular polygon to cover the whole regular polygon. We obtain an eigenfunction on the regular polygon that is positive on the boundary. Therefore \autoref{thm:regular} holds for $n> 6$. 
    
As a corollary from the above proof we also get a partial result for \autoref{con:nodal} 
\begin{corollary}
The nodal line for the second Neumann eigenfunction on right triangles with smallest angle $\alpha<\pi/6$ connects the interiors of the two longest sides.
\end{corollary}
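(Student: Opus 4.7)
The plan is to reverse the doubling construction used in the $n\ge 7$ case of \autoref{thm:regular}. Any right triangle $R$ whose smallest angle is $\alpha<\pi/6$ can be realized as half of a subequilateral isosceles triangle $I$: reflecting $R$ across its longer leg (the one opposite the angle $\pi/2-\alpha$) produces an isosceles triangle with apex angle $2\alpha<\pi/3$. \autoref{simple} gives that the second Neumann eigenvalue of $I$ is simple, and by \cite[Theorem 3.1]{LSminN} the corresponding eigenfunction is symmetric about the axis of symmetry. Its restriction to $R$ automatically satisfies the Neumann condition on the axis, since the normal derivative of a symmetric function vanishes there, so the restriction is the second Neumann eigenfunction of $R$, and the nodal line in $R$ is exactly one half of the nodal line in $I$.

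Next I would pin down the nodal line on $I$. By Courant's nodal domain theorem it consists of a single arc. If this arc coincided with the axis of symmetry, the sign change of the eigenfunction across the arc would contradict the reflection symmetry at points infinitesimally close to the axis; hence the nodal arc crosses the axis transversely at exactly one interior point. \autoref{line} forbids both endpoints from lying on the same side of $I$, so, being symmetric under the reflection, the arc must run from the interior of one long side $AB$ to its mirror image on the other long side $AD$. In particular the endpoints cannot be the shared apex $A$.

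Finally I would translate back to $R$. The half of the arc lying in $R$ runs from an interior point of $AB$ (the hypotenuse of $R$) to an interior point of the axis $AO$ (a leg of $R$). Because $\alpha<\pi/6$ forces $\sin\alpha<\cos\alpha$, the short leg $BO$ of length $|AB|\sin\alpha$ is strictly shorter than the long leg $AO$ of length $|AB|\cos\alpha$, and both legs are strictly shorter than the hypotenuse. Hence $AB$ and $AO$ are the two longest sides of $R$ and the nodal line joins their interiors, as the corollary claims. The only real bookkeeping here is this comparison of side lengths at the end; the substantive analytic ingredients---simplicity of $\mu_2(I)$, symmetry of the second Neumann eigenfunction on subequilateral triangles, and \autoref{line}---are already in place from the preceding proof.
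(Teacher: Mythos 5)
Your proposal is correct and follows the same route as the paper: reflect the right triangle across its longer leg to obtain a subequilateral isosceles triangle, then use simplicity of $\mu_2$ (\autoref{simple}), symmetry of the second Neumann eigenfunction \cite[Theorem 3.1]{LSminN}, Courant's nodal domain theorem, and \autoref{line} to force the nodal arc to join the interiors of the two equal (long) sides of the isosceles triangle, which upon restriction to the right triangle are the hypotenuse and the longer leg. The paper presents this as a byproduct of the regular-polygon argument in \autoref{sec:nge6} rather than starting from the right triangle, but the underlying reasoning and cited lemmas are identical.
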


\section{Proof of \autoref{thm:regular} for regular pentagons.}\label{sec:pentagon}

A regular pentagon decomposes into acute superequilateral triangles instead of subequilateral triangles (as was the case of $n\ge 6$ sides). 

The second Neumann eigenvalue $\mu_2$ of a superequilateral triangle ($ABD$ on \autoref{fig:five}) is simple but the second eigenfunction is antisymmetric \cite[Theorem 3.2]{LSminN} (as opposed to symmetric for subequilateral triangles). By \autoref{symmetric} all eigenfunctions for $\mu_3$ are therefore symmetric. But all these eigenfunctions belong to the second (simple) eigenvalue of the right triangle $OAB$ obtained by cutting the isosceles triangle $ABD$ in half (shaded on \autoref{fig:five}). Therefore $\mu_3$ of a superequilateral triangle $ABD$ is simple, with the eigenfunction symmetric with respect to $OA$. Unfortunately, \autoref{line} applies only to eigenfunctions for $\mu_2$. Moreover, we need to exclude a possibility of having 3 nodal domains (allowed for $\mu_3$). 

\begin{figure}[t]
  \begin{center}
\begin{tikzpicture}[scale=3]
\regular{5}
  \fill ({-cos(36)},0) coordinate (0) circle (0.02) node [below right=-2pt] {\tiny $O$}; 
  \draw[blue,very thick] (0) -- (0,0) coordinate (a) -- (144:1) coordinate (b) -- (0) node [pos=0.5,left=-1pt] {\tiny $T$};
  \draw[red] (0,0) -- (144:1) -- (-{2*cos(36)},0) coordinate (c) node[pos=0.7,above] {\tiny $R$} -- (-144:1) coordinate (d) -- cycle;
  \fill (a) circle (0.02) node [below right=-2pt] {\tiny $A$}; 
  \fill (b) circle (0.02) node [above=-2pt] {\tiny $B$}; 
  \fill (c) circle (0.02) node [below left=-2pt] {\tiny $C$}; 
  \fill (d) circle (0.02) node [below=-2pt] {\tiny $D$}; 
\end{tikzpicture}
  \end{center}
  \caption{Regular pentagon decomposed into acute \textbf{superequilateral} triangles, blue triangle $T$ and red rhombus $R$.}
  \label{fig:five}
\end{figure}
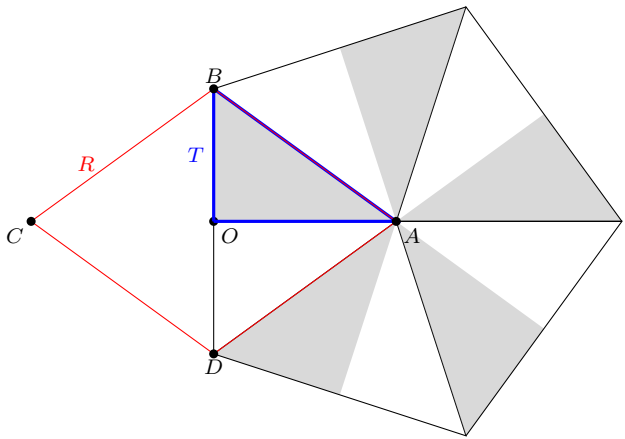

These two problems make the pentagonal case much harder than regular polygons with $n\ge 6$ sides.  Moreover, there is essentially no hope of finding explicit trigonometric formulae for its eigenfunctions.  A completely different approach is required.

Consider the rhombus $R$ ($ABCD$ on \autoref{fig:five}) built using right triangle $T$ ($ABD$ on the same figure) with the smallest angle at least $\pi/6$ (equal to $\pi/5$ for our regular pentagon). Then \cite[Corollary 1.3]{Srhombi} gives
\begin{align*}
  \mu_4(R)<\lambda_1(R).
\end{align*}
Note that the classical Levine-Weinberger inequality \cite{LW86} only gives $\mu_3\le\lambda_1$. Furthermore, the eigenfunction $u_2$ that belongs to $\mu_2(T)$ extends by symmetry to a doubly symmetric eigenfunction $\tilde u$ on $R$. Then $\tilde u$ must belong to the lowest eigenvalue of $R$ which possesses a doubly symmetric mode. Otherwise, a doubly symmetric eigenfunction of the lower eigenvalue of $R$ would be an eigenfunction for $T$. Therefore
\begin{align*}
  \mu_2(T)=\mu_4(R).
\end{align*}
For the triangle $T=ABD$ we have
\begin{lemma}\label{partials}
  The partial derivatives $u_x$ and $u_y$ of the second Neumann eigenfunction $u$ of $T$ are never zero and have opposite signs.
\end{lemma}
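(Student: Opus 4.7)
I set up $T$ so that the right angle is at the origin, with legs $L_y=\{x=0\}\cap T$ and $L_x=\{y=0\}\cap T$ on the coordinate axes and hypotenuse $H$ joining $(a,0)$ to $(0,b)$; put $v=u_x$ and $w=u_y$. The Neumann condition $\partial_\nu u=0$ immediately gives $v=0$ on $L_y$, $w=0$ on $L_x$, and the coupled condition $bv+aw=0$ on $H$; differentiating the first two tangentially yields $w_x=0$ on $L_y$ and $v_y=0$ on $L_x$ as well, while $-\Delta v=\mu_2(T)v$ and $-\Delta w=\mu_2(T)w$ in $T$.

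The core of the plan is to rule out zeros of $v$ in $T\setminus L_y$ (and of $w$ in $T\setminus L_x$ by the mirror argument). I extend $u$ by reflections across the two legs to the doubly symmetric Neumann eigenfunction $\tilde u$ on $R$ with eigenvalue $\mu_4(R)=\mu_2(T)$. Then $\tilde v=\tilde u_x$ is odd in $x$ and even in $y$, so its zero set contains the $y$-axis (the short diagonal of $R$), and $-\Delta\tilde v=\mu_4(R)\tilde v$ on $R$. If $v(p_0)=0$ for some $p_0\in T\setminus L_y$, the parities of $\tilde v$ force four symmetric copies of the nodal arc through $p_0$ to lie in the zero set, enlarging the nodal set of $\tilde v$ in $R$ beyond the short diagonal.

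I then do a case analysis on where the extra arc in $T$ meets $\partial T$: in the subcases where both endpoints lie on $L_x\cup L_y$, the fourfold-symmetric union of arcs closes into a loop whose interior is a nodal domain $\Omega\subset R$ disjoint from $\partial R$ (a lens around a leg segment, or a diamond around the origin). On such an interior $\Omega$, $\tilde v$ has constant sign and solves the Dirichlet eigenvalue problem with eigenvalue $\mu_2(T)$, so strict domain monotonicity gives
\begin{align*}
  \mu_2(T)=\lambda_1^{\mathrm{Dir}}(\Omega)>\lambda_1^{\mathrm{Dir}}(R)=\lambda_1(R),
\end{align*}
contradicting the rhombus inequality $\mu_4(R)<\lambda_1(R)$ recorded above. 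Combined with the mirror statement for $w$, this shows $v$ and $w$ have constant signs on $T\setminus L_y$ and $T\setminus L_x$ respectively; the coupled condition $bv+aw=0$ on $H$ then forces those signs to be opposite on $H$, hence throughout $T$.

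The delicate step is the remaining subcases in which at least one endpoint of the extra arc lies on $H$: there the reflected arcs terminate on $\partial R$, so no nodal domain of $\tilde v$ is obviously interior and the clean monotonicity contradiction fails. I expect these configurations to be excluded by exploiting the coupled boundary condition $bv+aw=0$ on $H$ in tandem with the parallel analysis for $w$ -- any zero of $v$ on $H$ forces $w$ to vanish there too, which links the extra $\tilde v$- and $\tilde w$-nodal structures and should eventually produce an interior nodal domain of one of them. This intertwined argument between $v$ and $w$ is the main technical obstacle.
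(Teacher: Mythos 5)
Your proposal has a genuine gap, and you have honestly flagged it yourself: the subcases in which the reflected nodal arc of $\tilde v$ (or $\tilde w$) terminates on the hypotenuse $H$ are not resolved, and there the clean ``interior nodal domain'' contradiction does not close. The hope that the coupled condition $bv+aw=0$ on $H$ plus an ``intertwined'' analysis of $v$ and $w$ will rule these out is not substantiated, and I do not see how to make it work from the ingredients you have assembled.

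The paper avoids this difficulty by importing one extra input that you are missing: Lemma~2 of~\cite{S13} applies to $T$, and it says that the second Neumann eigenfunction $u$ is \emph{strictly monotonic along the hypotenuse} $AB$. This is exactly the boundary information that pins down the signs of $u_x$ and $u_y$ on $H$ (say $u_x>0$, $u_y<0$ there), which your argument lacks. Once those signs are fixed on the hypotenuse, the doubly symmetric extension to the rhombus $R$ gives $u_y<0$ on $AB$ and $CB$, $u_y>0$ on $CD$ and $DA$, and $u_y=0$ on the diagonal $AC$. A hypothetical interior zero of $u_y$ then (via the sign-change argument of~\cite[Prop.~2.1(i)]{Mi12}) produces a positive nodal domain $N$ of $u_y$ \emph{entirely inside the triangle} $ABC$, touching $\partial R$ nowhere and at worst touching the axis $AC$; the inequality $\mu_4(R)=\lambda_1(N)>\lambda_1(R)>\mu_4(R)$ then yields the contradiction directly, with no case split on where arcs land. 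In other words, your ``lens/diamond'' subcases are what survive once the boundary signs are known; the hypotenuse subcases simply cannot occur, and the mechanism that excludes them is the monotonicity of $u$ on $AB$, not a self-contained interplay between $v$ and $w$. Add that ingredient and your approach essentially becomes the paper's.
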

\begin{remark}
    Note that this result can be deduced from the last paragraph on page 244 of Atar-Burdzy~\cite{AB04}. Nevertheless we present a simpler proof.
\end{remark}

\begin{proof}
We will follow the proofs of \cite[Lemmas 3.2,3.4]{Mi12} and \cite[Theorem 2]{S13}. First note that \cite[Lemma 2]{S13} applies to $T$, hence its second Neumann eigenfunction $u$ is strictly monotonic on $AB$. We can assume that $u_x>0$ and $u_y<0$ on $AB$. 

Now we consider the doubly symmetric extension of $u$ to the rhombus $R$. On $CB$ we also have $u_y<0$ due to double symmetry of $u$, while on $CD$ and $DA$ we have $u_y>0$. Similarly, $u_x>0$ on $DA$, and $u_x<0$ on $CB$ and $CD$. Furthermore, $u_x$ is antisymmetric with respect to $y$-axis and symmetric with respect to $x$-axis (again by double symmetry of $u$), while $u_y$ has reversed symmetries.

Suppose $u_y$ is zero somewhere in $R$, then by \cite[Proposition 2.1(i)]{Mi12} it must change sign inside $R$. By antisymmetry, it must be positive somewhere in $ABC$. But $u_y<0$ on $AB$ and $CB$ and $u_y=0$ on $AC$. Hence a nodal domain of $u_y$ is a subset of $ABC$ (part of the nodal line might be a subset of $AC$). We already noticed that $u$ belongs to $\mu_4(R)$, hence
\begin{align*}
  \mu_4(R)=\lambda_1(N)>\lambda_1(R)>\mu_4(R),
\end{align*}
a contradiction. Hence $u_y<0$ on $ABC$ (hence also on $T$). Similarly we can prove that $u_x>0$ on $T$.
\end{proof}

We need a domain monotonicity result for the eigenvalues of the domains with mixed boundary conditions. This is a special case of a more general partial domain monotonicity principle proved by Harrell. We present this special case due to its rather simple proof.

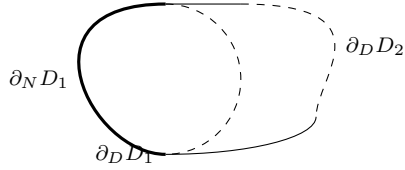
\begin{figure}[t]
  \begin{center}
    \begin{tikzpicture}
      \draw[very thick] (0,0) .. controls (-1,0) and (-2,2) .. (0,2) node [left,pos=0.5] {\tiny $\partial_N D_1$};
      \draw[dashed] (0,2) arc (90:-90:1) node [left,pos=0.5] {\tiny $\partial_D D_1$}; 
      \draw (0,2) -- (1,2);
      \draw (0,0) arc (270:360:2 and 0.5) coordinate (a);
      \draw[dashed] (1,2) .. controls (3,2) and +(0,0.5) .. (a) node [right,pos=0.5] {\tiny $\partial_D D_2$};
    \end{tikzpicture}
  \end{center}
  \caption{Domain monotonicity: $D_1\subset D_2$ and Neumann boundary condition on $D_1$ is specified only on a portion of the Neumann boundary $\partial_N D_2$. Solid lines indicate a Neumann boundary while dashed lines indicate Dirichlet boundarh.}
  \label{fig:monotonicity}
\end{figure}
\begin{lemma}[\protect{Special case of Harrell \cite[Corollary II.2]{Ha06}}]\label{lem:harrell}
  Suppose $D_1\subset D_2$ are open and the Neumann boundary $\partial_N D_1$ of $D_1$ is contained in the Neumann boundary $\partial_N D_2$ of $D_2$ (see \autoref{fig:monotonicity}). Then the lowest eigenvalue on $D_2$ for the mixed Dirichlet-Neumann problem is smaller than the lowest mixed eigenvalue on $D_1$, unless $D_1=D_2$ and $\partial_N D_1=\partial_N D_2$. 
\end{lemma}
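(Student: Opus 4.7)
The plan is to use the first mixed eigenfunction of the smaller domain as a test function in the Rayleigh--Ritz characterization of $\lambda_1(D_2)$ after extending it by zero, and then invoke unique continuation for the strict inequality. Let $u_1\in H^1_{\partial_D D_1}(D_1)$ realize
\[
\lambda_1(D_1) = \frac{\int_{D_1}|\nabla u_1|^2}{\int_{D_1}u_1^2},
\]
chosen nonnegative. Define $\tilde u$ on $D_2$ by $\tilde u = u_1$ on $D_1$ and $\tilde u = 0$ on $D_2\setminus D_1$.

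The first step is to verify that $\tilde u \in H^1_{\partial_D D_2}(D_2)$. The crucial geometric observation is that $\partial_N D_1\subset \partial_N D_2\subset \partial D_2$, so no point of $\partial_N D_1$ lies in the open set $D_2$; consequently the part of $\partial D_1$ interior to $D_2$ is contained in $\partial_D D_1$, where $u_1$ vanishes in the trace sense. This is exactly what is needed for the zero extension across that interior interface to produce an $H^1$ function on $D_2$. For the trace on $\partial_D D_2$: on $\partial_D D_2\cap \partial D_1$, membership in $\partial_N D_1$ is impossible since $\partial_N D_1\subset \partial_N D_2$ is disjoint from $\partial_D D_2$, so this intersection lies in $\partial_D D_1$, where $u_1=0$; on $\partial_D D_2\setminus \partial D_1$, the extension is zero by construction. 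Substituting $\tilde u$ into the variational formulation for $\lambda_1(D_2)$ yields
\[
\lambda_1(D_2) \le \frac{\int_{D_2}|\nabla \tilde u|^2}{\int_{D_2}\tilde u^2} = \lambda_1(D_1).
\]

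For the strict inequality, suppose equality. Then $\tilde u$ achieves the infimum of the Rayleigh--Ritz quotient on $H^1_{\partial_D D_2}(D_2)$, so it is a first mixed eigenfunction on $D_2$ and, by elliptic regularity, real-analytic in $D_2$. If $D_1\subsetneq D_2$, then (assuming $D_1$ is regular enough that $D_2\setminus \overline{D_1}$ has nonempty interior) $\tilde u$ vanishes on an open set, contradicting real-analyticity. If $D_1=D_2$ but $\partial_N D_1\subsetneq \partial_N D_2$, then $u_1$ solves the eigenvalue equation in $D_2$ while both $u_1=0$ and $\partial_\nu u_1=0$ on the relatively open portion $\partial_N D_2\setminus \partial_N D_1\subset \partial_D D_1$ of $\partial D_2$; Holmgren's unique continuation theorem then forces $u_1\equiv 0$, a contradiction. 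The main subtlety lies in the boundary bookkeeping in the first step, which is where the hypothesis $\partial_N D_1\subset \partial_N D_2$ is used; the Holmgren step is routine.
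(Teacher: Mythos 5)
Your proof is correct and follows essentially the same route as the paper's: extend the first mixed eigenfunction of $D_1$ by zero, observe that the hypothesis $\partial_N D_1\subset\partial_N D_2$ guarantees that the zero extension is an admissible trial function in $H^1_{\partial_D D_2}(D_2)$, deduce $\lambda_1(D_2)\le\lambda_1(D_1)$, and then rule out equality because a first eigenfunction of $D_2$ cannot vanish on an open subset of $D_2$ nor satisfy both Dirichlet and Neumann conditions on a relatively open piece of $\partial D_2$. You spell out the $H^1$ bookkeeping and invoke Holmgren's theorem explicitly where the paper states these points more tersely, but the underlying argument is identical.
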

\begin{proof}
  Suppose $\varphi$ is the eigenfunction for $D_1$. Extend it with $0$ to the whole set $D_2$. 
  Note that the extension satisfies the Dirichlet boundary condition on $\partial D_2\setminus \partial_N D_1$, hence on the Dirichlet boundary $\partial_D D_2$. Note also that $\partial_N D_1$ does not intersect $D_2\setminus D_1$, hence the extension is continuous.
  Therefore it is a valid trial function for the Rayleigh-Ritz quotient on $D_2$. But it also equals $0$ on an open set (if $D_1$ strictly included in $D_2$), or equals $0$ on a piece of boundary $\partial_N D_2\setminus \partial_N D_1$ (satisfies both Dirichlet and Neumann condition). In either case it must be $0$ everywhere.
\end{proof}

\begin{corollary}\label{nodal line}
  Let $D_1\subset D$ be a nodal domain for the eigenfunction for $\mu_2(D)$. Suppose we find $D_2\subset D$ such that $\partial_N D_1\subset \partial_N D_2\subset \partial D$, and the mixed eigenvalue $\lambda_1(D_2)> \mu_2(D)$. Then the nodal line for $\mu_2(D)$ must intersect the Dirichlet boundary $\partial_D D_2$.
\end{corollary}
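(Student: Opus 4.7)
The plan is to argue by contrapositive using Harrell's domain monotonicity principle (\autoref{lem:harrell}), once I correctly identify the nodal domain $D_1$ as a mixed-eigenvalue problem with first eigenvalue $\mu_2(D)$.

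The first step is the standard observation that restricting a nodal eigenfunction to a nodal domain yields the ground state of a mixed problem. Namely, let $u$ be the eigenfunction for $\mu_2(D)$ and set $v := u|_{D_1}$. On $D_1$, $v$ is of one sign, solves $-\Delta v = \mu_2(D)\,v$, vanishes on $\partial D_1$ minus the portion on $\partial D$ (i.e., on the part of $\partial D_1$ carved out by the nodal line), and inherits $\partial_\nu v=0$ on $\partial_N D_1 = \partial D_1 \cap \partial D$. Being nonzero and of one sign, $v$ must be the principal eigenfunction of the mixed problem on $D_1$, so $\lambda_1^{\text{mixed}}(D_1) = \mu_2(D)$.

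The second step is the contrapositive argument. Suppose the nodal line does not meet $\partial_D D_2$. Since the nodal line is interior to $D$ (up to its endpoints on $\partial D$) and $\partial_N D_2 \subset \partial D$, the nodal line also avoids $\partial_N D_2$. Together with $\partial_N D_1 \subset \partial_N D_2$, this forces $\partial D_1 \subset \overline{D_2}$, and since $D_1$ is a connected open subset of $D$ whose boundary lies in $\overline{D_2}$, one concludes $D_1 \subset D_2$. Now $D_1 \subset D_2$ with $\partial_N D_1 \subset \partial_N D_2$ is exactly the configuration of \autoref{lem:harrell}, which yields
\begin{equation*}
\lambda_1(D_2) \le \lambda_1^{\text{mixed}}(D_1) = \mu_2(D),
\end{equation*}
contradicting the hypothesis $\lambda_1(D_2) > \mu_2(D)$.

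The only delicate step is justifying $D_1 \subset D_2$ from the assumption that the nodal line misses $\partial_D D_2$; everything else is a direct invocation of \autoref{lem:harrell} plus the eigenfunction-restriction principle. This containment is essentially topological: each boundary piece of $D_1$ is either a subset of $\partial D$ (which by hypothesis sits inside $\partial_N D_2 \subset \overline{D_2}$) or a piece of the nodal line (which by assumption never crosses $\partial D_2$), so $D_1$ cannot escape $D_2$.
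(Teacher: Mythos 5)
Your proof is correct and takes essentially the same route as the paper: identify the nodal domain $D_1$ as a mixed eigenvalue problem with $\lambda_1(D_1)=\mu_2(D)$, then use Harrell's monotonicity (\autoref{lem:harrell}) to conclude $\lambda_1(D_2)\le\mu_2(D)$ whenever $D_1\subset D_2$, contradicting the hypothesis. The only difference is presentational — you argue the containment step ($D_1\subset D_2$ if the nodal line avoids $\partial_D D_2$) by contrapositive, while the paper argues directly ($D_1\not\subset D_2$ forces the nodal line to cross $\partial_D D_2$) — and both versions leave the same topological point at the same level of informality.
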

\begin{proof}
  The mixed eigenvalue of $D_1$ equals $\mu_2(D)$ and is smaller than $\lambda_1(D_2)$. Note also that for a nodal domain we always have $\partial_N D_1\subset \partial D$ and $\partial_D D_1$ is the nodal line. If $D_1\subset D_2$, then the above lemma gives $\lambda_1(D_1)>\lambda_1(D_2)$, leading to a contradiction.  Hence $D_1\not\subset D_2$, and the nodal line $\partial_D D_1$ must have a nonempty intersection with $D\setminus D_2$. Hence it must intersect $\partial_D D_2$.
\end{proof}

\begin{lemma}
  The lowest eigenvalue of a mixed Dirichlet-Neumann eigenvalue problem is simple and the eigenfunction can be taken positive inside of the domain.
\end{lemma}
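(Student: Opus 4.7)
The plan is to adapt the classical proof that the principal Dirichlet eigenvalue is simple with a positive eigenfunction, exploiting the fact that $H^1_B(D)$ is stable under the truncations $u\mapsto |u|$ and $u\mapsto u^{\pm}=\max(\pm u,0)$. The two assertions — positivity and simplicity — are handled in turn, with the strong maximum principle doing the heavy lifting in both.

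First I would prove that the eigenfunction can be chosen positive. Let $u$ be a minimizer of the Rayleigh-Ritz quotient over $H^1_B(D)$ whose value is $\lambda_1$. By Stampacchia's lemma, $|u|\in H^1_B(D)$ (the trace vanishes on $B$ since $u$ does), and $|\nabla |u||=|\nabla u|$ almost everywhere, so $|u|$ is again a minimizer and hence an eigenfunction satisfying $-\Delta |u|=\lambda_1|u|$ in $D$ with $|u|\ge 0$ and $|u|\not\equiv 0$. By elliptic regularity $|u|$ is real-analytic in the interior, and the strong maximum principle (or equivalently the Harnack inequality for the operator $\Delta+\lambda_1$) then forces $|u|>0$ throughout $D$. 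In particular $u$ cannot change sign, and after a possible sign flip we obtain an eigenfunction strictly positive in $D$.

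Next I would establish simplicity. Suppose $v\in H^1_B(D)$ is an eigenfunction for $\lambda_1$ that is linearly independent of $u$. The argument just given, applied to $v$, shows that after a sign flip $v>0$ in $D$ as well. Fix any interior point $x_0\in D$ and set
\begin{align*}
    w=v-\frac{v(x_0)}{u(x_0)}\,u.
\end{align*}
Then $w\in H^1_B(D)$ is an eigenfunction for $\lambda_1$ with $w(x_0)=0$. By the positivity argument applied to $w$, the function $w$ must be of constant sign in $D$; but a one-signed eigenfunction vanishing at an interior point must vanish identically by the strong maximum principle. Hence $v$ is a scalar multiple of $u$, contradicting linear independence.

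The main technical point to be careful about is verifying the strong maximum principle in the mixed-boundary setting. Since the conclusions we need — strict positivity inside $D$ and vanishing-at-an-interior-point $\Rightarrow$ $\equiv 0$ — are purely interior statements and eigenfunctions of $-\Delta$ are real-analytic in the open set $D$, the presence of a Dirichlet portion $B\subset\partial D$ does not interfere; the standard interior Harnack inequality suffices. The other routine item is checking that the truncations $u^\pm$ lie in $H^1_B(D)$, which is immediate from the Stampacchia chain rule together with the fact that $u$ vanishes on $B$ in the trace sense.
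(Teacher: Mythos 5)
Your proposal is correct and follows essentially the same line as the paper: truncation to $|u|$ combined with elliptic regularity and the strong minimum principle for the superharmonic function $|u|$ gives interior positivity. For simplicity the paper invokes orthogonality to force a second eigenfunction to change sign, whereas you normalize a linear combination to vanish at a chosen interior point; both variants reduce to the same contradiction against strict interior positivity of the modulus of an eigenfunction for $\lambda_1$.
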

\begin{proof}
Let $u$ be the eigenfunction for the smallest mixed eigenvalue. Then $|u|$ gives the same value of the Rayleigh-Ritz quotient. This means that $|u|$ is also a minimizer of the Rayleigh-Ritz quotient. Any minimizer is an eigenfunction (see e.g. \cite[page 55]{Lnotes}).
Then elliptic regularity shows that $|u|$ solves the eigenvalue problem classically (not in the weak sense). But $\Delta |u|=-\lambda|u|\le 0$, so $|u|$ is superharmonic. Take any point $p$ inside the domain at which $|u|=0$.  Nonnegativity at $p$ violates the minimum principle. Therefore $u>0$ inside of the domain.

Suppose $u> 0$ and $v$ are orthogonal eigenfunctions belonging to the lowest eigenvalue. Orthogonality forces $v$ to change sign in the domain, and the argument above shows that $|v|$ is yet another eigenfunction, violating the minimum principle. Therefore the eigenvalue is simple.
\end{proof}

Now we restrict our attention to the right triangle $T=OAB$ with angle $\pi/5$ near $A$ (see \autoref{fig:five}). We can assume that $|OA|=1$ and the second Neumann eigenfunction $u$ is negative at $B$. We know that $u_x$ is positive and $u_y$ is negative by \autoref{partials}. This implies:
\begin{lemma}\label{le:monotonic}
The nodal line for the eigenfunction for the lowest positive eigenvalue on the right triangle $OAB$ is the graph of a strictly increasing function, hence it must touch the longest side $AB$. 
\end{lemma}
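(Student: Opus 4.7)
The plan is to deduce both claims directly from \autoref{partials}, which yields $u_x>0$ and $u_y<0$ throughout $T=OAB$. Since $\nabla u$ is nowhere zero on $T$, the implicit function theorem shows that in a neighbourhood of each point of $\{u=0\}$ the nodal set is the graph of a smooth function $x\mapsto y(x)$, and implicit differentiation of $u(x,y(x))=0$ gives
\[
y'(x)=-\frac{u_x(x,y(x))}{u_y(x,y(x))}>0,
\]
so locally the nodal set is a strictly increasing curve.

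To globalise this picture I would invoke Courant's nodal domain theorem: the eigenfunction for $\mu_2(T)$ has exactly two nodal domains in the simply connected triangle $T$. Because $\nabla u\ne 0$ along the nodal set there is no branching and no closed loops, and \autoref{line} forces the nodal set to meet $\partial T$ at exactly two points. Hence the nodal set is a single simple arc joining two boundary points, which by the previous paragraph is globally the graph of a strictly increasing function of $x$.

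It remains to locate the endpoint at which $x$ and $y$ simultaneously attain their maxima along the arc. This endpoint cannot lie on the leg $OA$: there $y$ equals its minimum over $\overline{T}$, which would force every other point of the arc (each lying in the open triangle, where $y>0$) to have smaller $y$-coordinate, contradicting strict monotonicity. An identical argument in the $x$-coordinate rules out the leg $OB$ as well as the vertices $O$, $A$, $B$, each of which attains the minimum of either $x$ or $y$. The only remaining possibility is a point in the relative interior of the hypotenuse $AB$, which is the longest side, proving the lemma. I anticipate no serious difficulty here, since the heavy lifting has already been done by \autoref{partials}; the only step deserving a little care is the patching of the local implicit graphs into a single globally monotone arc, which is handled by combining Courant's theorem with the nonvanishing of $\nabla u$.
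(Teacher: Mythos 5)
Your argument is correct and takes essentially the same route the paper has in mind: the paper offers no explicit proof of \autoref{le:monotonic}, merely stating that the sign conditions $u_x>0$, $u_y<0$ from \autoref{partials} ``imply'' the lemma, and you have simply supplied the details (implicit function theorem giving positive slope, globalization via Courant's nodal domain theorem and \autoref{line}, and the endpoint analysis) that the authors leave to the reader.
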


Suppose that the nodal line also touches the shortest side $OB$ (or hits the origin $O$).  We construct an explicit subdomain $D_2$ of $OAB$, such that a nodal domain of the second eigenfunction of T is contained in $D_2$, yet $\lambda_1(D_2)>\mu_2(T)$, contradicting \autoref{nodal line}.

Therefore the second eigenfunction of $T$ has a fixed sign on $OB$. Reflecting this eigenfunction 10 times we can cover a regular pentagon and find an eigenfunction which is negative on its boundary. This proves the remaining case $n=5$ in \autoref{thm:regular}.

 \begin{lemma}\label{lem:stretching}
  Let $\Omega$ be a Lipschitz domain and $\Omega_t=\{(x,ty): (x,y)\in\Omega\}$ (a stretched domain), for $t>1$. Then for any mixed boundary conditions $\lambda_1(\Omega_t)\le\lambda_1(\Omega)$. The same is true for the smallest nonzero Neumann eigenvalues.
\end{lemma}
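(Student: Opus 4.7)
The plan is to use the variational characterization together with the obvious pullback under the stretching map $\Phi_t : \Omega \to \Omega_t$ given by $\Phi_t(x,y) = (x, ty)$. This is a bi-Lipschitz homeomorphism carrying the Dirichlet part $B \subset \partial \Omega$ onto the corresponding set $\Phi_t(B) \subset \partial \Omega_t$ and the Neumann part to the Neumann part, so it induces an isomorphism between $H^1_B(\Omega)$ and $H^1_{\Phi_t(B)}(\Omega_t)$ via $u \mapsto v$, where $v(x,y) := u(x, y/t)$.

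First I would carry out the change of variables. Substituting $y' = y/t$ (so $dy = t\, dy'$) gives
\begin{align*}
  \int_{\Omega_t} v^2 \, dx\, dy &= t \int_\Omega u^2 \, dx \, dy', \\
  \int_{\Omega_t} |\nabla v|^2 \, dx \, dy &= t \int_\Omega u_x^2 \, dx \, dy' + \frac{1}{t} \int_\Omega u_y^2 \, dx \, dy',
\end{align*}
since $v_x(x,y) = u_x(x, y/t)$ and $v_y(x,y) = t^{-1} u_y(x, y/t)$. Dividing, the Rayleigh--Ritz quotient satisfies
\begin{align*}
  \frac{\int_{\Omega_t} |\nabla v|^2}{\int_{\Omega_t} v^2} = \frac{\int_\Omega u_x^2 + t^{-2} \int_\Omega u_y^2}{\int_\Omega u^2} \le \frac{\int_\Omega |\nabla u|^2}{\int_\Omega u^2},
\end{align*}
because $t^{-2} < 1$. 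Taking $u$ to be the eigenfunction for $\lambda_1(\Omega)$ then yields the mixed-problem inequality $\lambda_1(\Omega_t) \le \lambda_1(\Omega)$ directly from the infimum formulation.

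For the second statement, regarding the smallest nonzero Neumann eigenvalue, one additionally needs to check that the pullback preserves the orthogonality-to-constants constraint. This is immediate: if $\int_\Omega u \, dx \, dy' = 0$, then by the same substitution $\int_{\Omega_t} v \, dx \, dy = t \int_\Omega u \, dx \, dy' = 0$. Applying the min-max characterization of $\mu_2$ on $\Omega_t$ to the trial function $v$ built from the $\mu_2(\Omega)$-eigenfunction finishes the argument.

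There is no real obstacle in the proof; the only mildly delicate point is verifying that $\Phi_t$ transports the Dirichlet boundary to the Dirichlet boundary and the Neumann boundary to the Neumann boundary, so that $v \in H^1_{\Phi_t(B)}(\Omega_t)$ is a legitimate trial function in the variational problem for $\lambda_1(\Omega_t)$. This is automatic from the definition of $\Phi_t$ as a homeomorphism of the closures.
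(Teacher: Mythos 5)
Your proof is correct and is the standard direct argument. The paper itself does not write out a proof here; it simply refers to the last paragraph on page~132 of \cite{LSminN}, where this same change-of-variables computation in the Rayleigh--Ritz quotient is carried out. Your version is self-contained and gets the details right: the Jacobian factor~$t$, the chain rule giving $v_y = t^{-1}u_y$, the resulting $t^{-2}<1$ coefficient on the $u_y^2$ term, the preservation of the Dirichlet/Neumann split under the bi-Lipschitz map $\Phi_t$, and the preservation of the mean-zero constraint needed for the Neumann case.
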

\begin{proof}
  See the last paragraph on page 132 of \cite{LSminN}. 
\end{proof}

To avoid irrational triangle vertices, we define right triangles $T'\subset T\subset T''$, as well as a reference triangle $T_{\text{ref}}$.  These triangles have two common vertices $O=(0,0)$ and $A=(1,0)$, and their third vertices are as follows:
\begin{align}
    T':&\, B'=(0, 85/117) = (0, 0.72649573\ldots) ,\label{eq:Tp}\\
    T:&\, B=(0, \tan(\pi/5) = (0, 0.72654252\ldots),\label{eq:T}\\
    T'':&\, B''=(0, 93/128) = (0, 0.7265625), \label{eq:Tpp}\\
    T_{\text{ref}}:&\, B_{\text{ref}}=(0,1).
\end{align}
The triangle $T$ can be used to create the regular pentagon. Let us define linear maps 
\begin{align}
    \varphi:\, T_{\text{ref}}&\to T,\\
    \psi:\, T_{\text{ref}}&\to T''
\end{align}
which fix $A$ and send $B_{\text{ref}}$ to $B$ and $B''$, respsectively.

\begin{lemma}\label{lem:threshold}
    The smallest nonzero  Neumann eigenvalues of $T$ and $T'$ satisfy
    \begin{align*}
        \mu_T\le \mu_{T'}< 12.25.
    \end{align*}
\end{lemma}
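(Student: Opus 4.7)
The plan is to handle the two inequalities by completely different means.

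For the first inequality, $T$ is the image of $T'$ under the vertical stretch $(x,y)\mapsto(x,ty)$ with factor $t=117\tan(\pi/5)/85$, so by \autoref{lem:stretching} it is enough to verify $t>1$. Using $\tan^2(\pi/5)=5-2\sqrt{5}$, this reduces to the surd inequality $117^2(5-2\sqrt{5})>85^2$; isolating $\sqrt{5}$ and squaring once turns this into a single integer comparison that can be checked by hand.

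For the second inequality, since $T'$ has rational vertices, I would upper bound $\mu_{T'}$ by Rayleigh--Ritz: choose a conforming (piecewise polynomial) trial space on a fine triangulation of $T'$ and compute the smallest positive eigenvalue of the resulting generalized matrix eigenvalue problem. Because conforming Galerkin approximations are always upper bounds for Neumann eigenvalues, this automatically certifies an upper bound on $\mu_{T'}$. Since all stiffness and mass matrix entries are rational, the computation can be performed in exact or interval arithmetic inside the validated-numerics framework of \autoref{sec:NonconformingFiniteElements}, and only a modest amount of refinement is needed to push the Galerkin value safely below $49/4=12.25$.

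The first inequality is essentially a one-line arithmetic verification and poses no difficulty. The substantive step is the upper bound on $\mu_{T'}$; this should be routine given the machinery the paper develops, but the one real concern is that $12.25$ may sit only slightly above the true $\mu_{T'}$, so the trial space has to be refined enough that the certified Galerkin eigenvalue clears $49/4$ with a guaranteed margin rather than just numerically.
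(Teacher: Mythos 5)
Your proposal follows the paper's proof almost exactly: the paper also obtains $\mu_T \le \mu_{T'}$ by noting that $T$ is a vertical stretch of $T'$ (invoking \autoref{lem:stretching}), and certifies $\mu_{T'} \le 12.2483 < 12.25$ by evaluating the Rayleigh--Ritz quotient of a conforming piecewise-quadratic finite element approximation on $T'$ using interval arithmetic. The only addition you make is the explicit surd-arithmetic check that the stretch factor $t = 117\tan(\pi/5)/85 > 1$, which the paper implicitly takes for granted from the choice $85/117 < \tan(\pi/5)$.
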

\begin{proof}

    The triangle $T$ is obtained by vertically stretching $T'$, hence \autoref{lem:harrell} gives the required monotonicity.  We can get a very accurate upper bound for $\mu_{T'}$ using the finite element method with quadratic nodal conforming elements. An eigenfunction approximation obtained from conforming elements is a valid trial function (continuous, piecewise quadratic) for the Rayleigh-Ritz quotient, hence we get a strict upper bound for $\mu_{T'}$ by plugging it directly into the Rayleigh-Ritz quotient. We tesselate $T'$ by congruent triangles with $N$ distinct nodes, and construct test functions in $H^1(T')$ which are continuous and piecewise quadratic. Standard finite element approximation arguments tell us that these Rayleigh-Ritz quotients will form a decreasing sequence in $N$, which converges to the true $\mu(T')$. In \autoref{tab:UppermuT'} we show the Rayleigh-Ritz quotient computed using interval arithmetic (we only report 4 digits, though the interval around these approximations is of width $1e-9$).  We can comfortably bound $\mu(T')$ above:
    \begin{table}[t]
    
	\begin{tabular}{@{}cc@{}}
	    \toprule
    Number of nodes N & Rayleigh Quotient\\
	    \midrule
    38&12.2482\\
    128&12.2476\\
    463&12.2475\\
	    \bottomrule
    \end{tabular}
    \caption{Upper bounds for $\mu_{T'}$ using conforming quadratic finite elements (validated numerics.}
    \label{tab:UppermuT'}
    \end{table}

\begin{align*}
  \mu_{T'} \le 12.2483< 12.25.
\end{align*}
\end{proof}

In what follows we will use a threshold value $V=12.25$. We choose this particular value to make sure $\mu_T<V$.

\begin{lemma}\label{lem:subdomainmonotonicity}
    Let $D\subset T_{\text{ref}}$. Clearly $\psi(D)\subset T'', \varphi(D)\subset T$. Impose any mixed Dirichlet-Neumann boundary conditions on $\partial D$. Then $\psi$ and $\varphi$ induce mixed boundary conditions on $\psi(\partial D)$ and $\varphi(\partial D)$. The smallest eigenvalues for the mixed eigenvalue problems on $\varphi(D)$ and $\phi(D)$ then satisfy
    \begin{align*}
	\lambda(\varphi(D))\ge \lambda(\psi(D)).
    \end{align*}
\end{lemma}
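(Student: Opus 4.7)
The plan is to recognize that both $\varphi$ and $\psi$ are simply vertical scalings, so that $\psi(D)$ is obtained from $\varphi(D)$ by a vertical stretching, and then apply the stretching lemma (\autoref{lem:stretching}) directly. Since $\varphi$ and $\psi$ are linear maps fixing $A=(1,0)$ (and necessarily fixing $O=(0,0)$, being linear) and sending $B_{\text{ref}}=(0,1)$ to the points $B$ and $B''$ respectively, both lying on the $y$-axis, the maps must have the form
\begin{align*}
    \varphi(x,y) = (x, \tan(\pi/5)\,y), \qquad \psi(x,y) = (x, (93/128)\,y).
\end{align*}

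First, I would verify the numerical fact that $\tan(\pi/5) < 93/128$. From the decimal expansions given in \eqref{eq:T} and \eqref{eq:Tpp} we have $\tan(\pi/5) = 0.72654252\ldots < 0.7265625 = 93/128$, so the ratio
\begin{align*}
    \alpha := \frac{93/128}{\tan(\pi/5)}
\end{align*}
satisfies $\alpha > 1$. Define $S(x,y) = (x,\alpha y)$. Then $\psi = S \circ \varphi$, and consequently $\psi(D) = S(\varphi(D))$, so $\psi(D)$ is exactly the vertical stretch of $\varphi(D)$ by the factor $\alpha > 1$.

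Next, I would check that the mixed boundary conditions match up under this identification. By hypothesis, the boundary conditions on $\varphi(\partial D)$ and $\psi(\partial D)$ are each induced from the same decomposition of $\partial D$ into Dirichlet and Neumann parts. Since $S$ is the bijection sending $\varphi(\partial D)$ to $\psi(\partial D)$ point-by-point consistently with this decomposition, the Dirichlet/Neumann labelling on $\psi(\partial D)$ coincides with the one $S$ transports from $\varphi(\partial D)$. Therefore \autoref{lem:stretching}, applied with $\Omega = \varphi(D)$ and $t = \alpha > 1$, yields
\begin{align*}
    \lambda(\psi(D)) = \lambda\bigl(S(\varphi(D))\bigr) \le \lambda(\varphi(D)),
\end{align*}
which is exactly the desired inequality.

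The main obstacle, such as it is, lies in confirming the numerical inequality $\tan(\pi/5) < 93/128$, which is the entire reason the intermediate triangle $T''$ was chosen with this rational vertex coordinate; once that is in hand, the remainder is a one-line invocation of \autoref{lem:stretching}. A minor bookkeeping point is to make explicit that both $\varphi$ and $\psi$ being linear forces $O=(0,0)$ to be fixed, so that the claim "fix $A$ and send $B_{\text{ref}}$ to $B$ (resp.\ $B''$)" uniquely determines the maps as the asserted vertical scalings.
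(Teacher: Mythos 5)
Your proof is correct and follows exactly the same approach as the paper, which simply notes that $\psi(D)$ is a vertically stretched version of $\varphi(D)$ and invokes \autoref{lem:stretching}. You have merely spelled out the details — the explicit form of the scaling maps, the verification that $\tan(\pi/5) < 93/128$, and the transport of boundary conditions — that the paper leaves implicit.
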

\begin{proof}
    The domain $\psi(D)$ is a vertically stretched version $\varphi(D)$. Therefore \autoref{lem:stretching} implies the result.
\end{proof}

\begin{definition}[cf. \autoref{fig:alg}]\label{def:pUL}
    Let $a=1$ and $b=\tan(\pi/5)$ and $N=64$. Define a grid on $T$:
\[
G = \{(ai/N, bj/N)\;:0 \leq i,j \leq N, i+j \leq N\;\},
\]
and for a grid point $p$ let
\begin{align*}
R_L(p) &= \{(x, y) \;|\; x \geq x_0,  y \leq y_0\} \cap T, \\
R_U(p) &= \{(x, y) \;|\;  x \leq x_0, y \geq y_0\} \cap T. 
\end{align*}
Furthermore define two sequences of points:
\begin{align}
    \begin{split}
	p_L=\big\{(28,21),\,(31,27),\,(24,14),\,&(28,23),\,(26,19),\\
	      &(21,9),\,(30,27),\,(23,15),\,(31,30)  \big\},
    \end{split}\label{eq:pL}\\
\begin{split}
    p_U=\big\{(25,24),\,(20,17),\,(28,30),\,&(18,13),\,(22,18),\,(28,28),\\
		&(29,29),\,(19,12),\,(27,25),\,(25,20) \big\}.
\end{split}\label{eq:pU}
\end{align}
\end{definition} 
For these particular points $p_L^{(k)}$ and $p_{U}^{(k)}$ we define sequences of subdomains of $T$.
\begin{definition}\label{def:DUL}
    Let $U^{(0)}=L^{(0)}=\emptyset$, and
\begin{align*}
L^{(k+1)} &= L^{k} \cup R_L(p_L^{(k+1)}),\\
U^{(k+1)} &= U^{k} \cup R_U(p_U^{(k+1)}).
\end{align*}
We call these sets the \emph{upper} and \emph{lower exclusion regions} respectively, for reasons which will become clear shortly.  Also define the following subdomains of $T$ with associated mixed boundary conditions:
\begin{align}
    D_U(k,p) &= T\setminus (L^{(k)}\cup R_L(p)),\,\,\text{Dirichlet on } \partial D_U(k,p)\setminus(\overline{AB}\cup \overline{OB}), \text{Neumann elsewhere}\\
    D_L(k,p) &= T\setminus (U^{(k)}\cup R_U(p)),\quad\text{Dirichlet on } \partial D_L(k,p)\setminus \partial T, \text{Neumann elsewhere}
\end{align}
\end{definition}
\begin{remark}
We note that for appropriate choices of points $p=p_L^{(k+1)}, p_U^{(k+1)}$ in \autoref{eq:pL} and \autoref{eq:pU} will make 
$$ D_U(k,p_L^{(k+1)})=  T\setminus (L^{(k+1)}), \qquad D_L(k,p_U^{(k+1)}) = T\setminus(U^{(k+1)}).$$
 We use will this fact later on.
\end{remark}

\begin{lemma}\label{lem:validated}
    The lowest mixed eigenvalues on the following subdomains of $T$ and $T''=\psi\varphi^{-1}T$ satisfy
    \begin{align}
	\lambda_1\left(D_U\left(k, p_U^{(k+1)}\right)\right)&\ge \lambda_1\left( \psi\varphi^{-1}D_U\left(k,p_U^{(k+1)}\right) \right)>V,\qquad\text{ for }0\le k\le 9,\\
	\lambda_1\left(D_L\left(k+1, p_L^{(k+1)}\right)\right)&\ge \lambda_1\left( \psi\varphi^{-1}D_L\left(k+1, p_L^{(k+1)}\right) \right)>V,\qquad\text{ for }0\le k \le 8,\\
	\lambda_1\left(D_L(10, (0,0))\right)&\ge \lambda_1\left( \psi\varphi^{-1}D_L(10, (0,0)) \right)>V.
    \end{align}
\end{lemma}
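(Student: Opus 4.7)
The plan is to reduce the 20 claimed lower bounds to a purely validated numerical computation on polygons with rational vertex coordinates. There are two ingredients: a domain monotonicity step that replaces the irrational triangle $T$ with the slightly taller triangle $T''$ whose third vertex $B''=(0,93/128)$ is rational, and a rigorous finite element lower bound for the smallest mixed eigenvalue on each of the 20 resulting polygons. Together these certify that every listed $\lambda_1$ strictly exceeds the threshold $V=12.25$.

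The first (monotonicity) inequality in each of the three displayed chains is a direct application of \autoref{lem:subdomainmonotonicity}. For instance, setting $\tilde D = \varphi^{-1} D_U(k,p_U^{(k+1)}) \subset T_{\text{ref}}$, we have $\varphi(\tilde D)= D_U(k,p_U^{(k+1)}) \subset T$ and $\psi(\tilde D)=\psi\varphi^{-1} D_U(k,p_U^{(k+1)}) \subset T''$. Since $T''$ is obtained from $T$ by vertically stretching the common base $\overline{OA}$ to move $B$ slightly upward to $B''$, the map $\psi\varphi^{-1}$ is a vertical stretch that carries each portion of $\partial \tilde D$ to the correct part of the Dirichlet/Neumann boundary of $\psi(\tilde D)$. \autoref{lem:stretching} then gives $\lambda_1(\varphi(\tilde D))\ge \lambda_1(\psi(\tilde D))$. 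The identical argument applies to both $D_L$ cases, including the terminal $D_L(10,(0,0))$, so only the second (strict) inequality requires work.

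The heart of the lemma is therefore the bound $\lambda_1(\psi\varphi^{-1} D) > V$ on each of the 20 stretched polygons. By construction every corner of such a polygon is either a vertex of $T''$ or a point of the grid $\psi\varphi^{-1}(G)$, and in the stretched coordinates this grid consists of the points $(i/64,(93/128)(j/64))$; in particular every vertex has rational coordinates. We apply the nonconforming finite element method of \autoref{sec:NonconformingFiniteElements}, which is a generalization of the Carstensen--Gedicke framework \cite{CG14} to the mixed boundary setting and produces a certified lower bound for $\lambda_1$ from below in terms of the smallest eigenvalue of an explicit, exactly representable sparse stiffness/mass generalized eigenvalue problem. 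Rationality of the vertices is used here in a crucial way: the stiffness and mass matrices can be assembled exactly (or within thin interval bounds), so no irrationality creeps in. Each matrix eigenvalue lower bound is then certified independently by the three redundant methods discussed in the introduction, namely $LDL^T$ with interval arithmetic, $LU$ with exact rational arithmetic, and Hessenberg reduction together with Sturm sequences; the implementations and verified outputs are recorded in \autoref{sec:matrixlowerbound} and \autoref{validatedcomputing}.

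The main obstacle is producing, domain by domain, a mesh fine enough that the \emph{guaranteed} FEM lower bound (which includes the nonconforming consistency correction) actually clears $V=12.25$, and then certifying each of the 20 resulting matrix eigenvalue bounds. This is the reason one cannot simply exhibit a short analytic witness: the 20 subdomains have been selected by the learning algorithm of \autoref{sec:algorithm} precisely so that $V$ is attained with a workable mesh size, and so that the threefold cross-check is feasible in exact rational or narrow-interval arithmetic. Once these certified matrix bounds are in hand, combining them with \autoref{lem:subdomainmonotonicity} as above yields all three assertions of the lemma simultaneously.
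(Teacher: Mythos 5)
Your proposal is correct and follows essentially the same route as the paper: the first inequality in each chain comes from the stretching/domain monotonicity argument (\autoref{lem:subdomainmonotonicity} via \autoref{lem:stretching}), and the strict inequality $>V$ is established on the rational-vertex stretched polygons by the mixed-boundary extension of the Carstensen--Gedicke nonconforming lower bound, reduced to certified positive-definiteness (equivalently, lower bounds on the smallest eigenvalue) of the exactly assembled sparse stiffness/mass systems, cross-checked by $LDL^T$ interval arithmetic, exact rational $LU$, and tridiagonalization with Sturm sequences. The only small detail you elide is the further rescaling $T''\to T'''$ (integer perpendicular legs $93$ and $128$) that makes the assembled matrices integer-valued, and the explicit choice of the algebraic threshold $\Lambda = 12.25 + 3/256$ needed so that the consistency correction $\Lambda/(1+\kappa^2\Lambda H^2)$ still clears $V$; both are technical points consistent with, and implied by, what you wrote.
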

\begin{remark}
    Note that in the last case we use $p = (0,0)$, which results in no enlargement to $U^{(10)}$. Effectively, $D_L(10, (0,0))=T\setminus U^{(10)}$.
\end{remark}

The proof is postponed to \autoref{Section:Numerics}. The proof involves lower bounds for the eigenvalues of large sparse rational matrices.

\begin{lemma}\label{lem:nodalline}
    Assuming the nodal line for $\mu_T$ starts on the side $OB$, it does not intersect any of the excluded sets $U^{(k)}$ and $L^{(k)}$ (e.g. the shaded regions on \autoref{fig:alg}).
\end{lemma}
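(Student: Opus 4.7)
The plan is a straightforward induction on $k$, showing that at every stage $L^{(k)}\subset D_1^L$ and $U^{(k)}\subset D_1^U$, where $D_1^L$ and $D_1^U$ denote the lower and upper nodal domains of the second Neumann eigenfunction $u$ on $T$. Under the standing assumption that the nodal line begins on $OB$, \autoref{le:monotonic} shows the nodal line is the graph of a strictly increasing function $y=f(x)$ from a point of $OB$ to a point of $AB$; consequently $D_1^U$ is the component containing the corner $B$ with Neumann boundary contained in $\overline{AB}\cup\overline{OB}$, while $D_1^L$ contains the side $OA$ with Neumann boundary on the remaining portion of $\partial T$. Both nodal domains share the lowest mixed eigenvalue $\mu_T$, and $\mu_T<V$ by \autoref{lem:threshold}.

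The key geometric observation, coming directly from monotonicity of $f$, is that for any point $p=(x_0,y_0)$, the rectangle $R_L(p)$ misses the nodal line exactly when $f(x_0)>y_0$, in which case $R_L(p)\subset D_1^L$; symmetrically, $R_U(p)$ misses the nodal line exactly when $f(x_0)<y_0$, in which case $R_U(p)\subset D_1^U$. Thus the conclusion of the lemma is equivalent to the pair of inclusions $L^{(k)}\subset D_1^L$ and $U^{(k)}\subset D_1^U$.

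The base case $k=0$ is trivial since $L^{(0)}=U^{(0)}=\emptyset$. For the inductive step, assume the inclusions hold at level $k$. I first show $U^{(k+1)}\subset D_1^U$ by contradiction. If the nodal line passed above $p_U^{(k+1)}$, then by the geometric observation $R_L(p_U^{(k+1)})\subset D_1^L$, which together with the hypothesis $L^{(k)}\subset D_1^L$ gives $D_1^U\subset T\setminus(L^{(k)}\cup R_L(p_U^{(k+1)}))=D_U(k,p_U^{(k+1)})$. Since $\partial_N D_1^U\subset \overline{AB}\cup\overline{OB}$, which is precisely the Neumann part of $\partial D_U$ prescribed in \autoref{def:DUL}, Harrell's monotonicity (\autoref{lem:harrell}) would give
\[
\lambda_1(D_U(k,p_U^{(k+1)}))\le\lambda_1(D_1^U)=\mu_T<V,
\]
contradicting \autoref{lem:validated}. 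The mirror argument—supposing the nodal line passed below $p_L^{(k+1)}$, deducing $R_U(p_L^{(k+1)})\subset D_1^U$ and hence $D_1^L\subset D_L(k+1,p_L^{(k+1)})$ using the freshly established $U^{(k+1)}\subset D_1^U$, and then applying Harrell against the $D_L$ half of \autoref{lem:validated}—yields $L^{(k+1)}\subset D_1^L$ and completes the induction.

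There is no substantive obstacle inside this lemma itself. The mixed boundary conditions in \autoref{def:DUL} were engineered so that the Neumann-containment hypothesis of Harrell is automatic in both halves of the step: any $D_1^U\subset D_U(k,p_U^{(k+1)})$ has Neumann boundary confined to $\overline{AB}\cup\overline{OB}$, matching $D_U$, while any $D_1^L\subset D_L(k+1,p_L^{(k+1)})$ has Neumann boundary on $\partial T$, matching $D_L$. The entire computational difficulty has been deferred to \autoref{lem:validated}; the present lemma merely packages those validated eigenvalue bounds, via Harrell's monotonicity, into a sequence of successive geometric restrictions on the nodal line.
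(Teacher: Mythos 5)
Your proof is correct and follows essentially the same inductive strategy as the paper: at each step, combine the validated eigenvalue bounds from \autoref{lem:validated} with Harrell's domain monotonicity (\autoref{lem:harrell}) and the monotonicity of the nodal line (\autoref{le:monotonic}) to force the nodal line past the current test point. The only cosmetic difference is that the paper invokes \autoref{nodal line} as a packaged corollary where you unwind the Harrell contradiction directly; otherwise the argument is identical.
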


\begin{proof}
    The claim is clearly true for $U^{(0)}$ and $L^{(0)}$. Assume the claim holds for $U^{(k)}$ and $L^{(k)}$. We construct $D_U(k, p_U^{(k+1)})$ and note that $\lambda_1(D_U(k, p_U^{(k+1)}))$ is larger than the threshold $V$ (by \autoref{lem:validated}). Hence $D_U(k, p_U^{(k+1)})$ is a subdomain of $T$ whose lowest eigenvalue exceeds the smallest nonzero Neumann eigenvalue $\mu_T$ for T. Therefore \autoref{nodal line} implies that the nodal line for the eigenvalue $\mu_T$ must intersect $D_U(k, p_U^{(k+1)})$. This cannot happen on $L^{(k)}$, and hence the nodal line intersects $R_L(p_U^{(k+1)})\setminus L^{(k)}$. Monotonicity of the nodal line (\autoref{le:monotonic}) ensures that point $p_L^{(k+1)}$ is separated from $L^{(k)}$ by the nodal line. In particular it must belong to a different nodal domain than $L^{(k)}$, as does any point above and to the left of $p_L^{(k+1)}$. Hence the nodal line cannot intersect the constructed $U^{(k+1)}$. 

A similar argument applies to $L^{(k+1)}$.
%, except that we use $U^{(k+1)}$ instead of $L^{(k)}$ and we switch every index $L$ to $U$ and \emph{vice versa}.
  
\end{proof}

\begin{theorem}\label{thm:twolongestsides}
    The nodal line for the eigenfunction for $\mu_T$ does not intersect $OB$.
\end{theorem}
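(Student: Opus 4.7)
The plan is a short proof by contradiction built from \autoref{lem:nodalline}, \autoref{lem:harrell}, \autoref{lem:threshold}, and the third inequality of \autoref{lem:validated}. All the genuine difficulty lives in constructing the exclusion sequences and establishing the validated eigenvalue bounds of \autoref{lem:validated}; by the time I reach this theorem, the deduction collapses to a quick contrapositive, so I do not expect a further obstacle at this stage.

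First I would assume for contradiction that the nodal line intersects $\overline{OB}$. By \autoref{le:monotonic} it is a strictly increasing graph, so it meets $\overline{OB}$ at exactly one point $(0,y^{\ast})$ with $0\le y^{\ast}<\tan(\pi/5)$ and terminates on $AB$ at some point $q^{\ast}$. Let $N_L$ denote the lower nodal domain (below the nodal line), and $N_U$ the upper one. By \autoref{lem:nodalline} the nodal line avoids $U^{(10)}$. Each set $R_U\bigl(p_U^{(k)}\bigr)$ is a convex (hence connected) intersection of two half-planes with $T$, is disjoint from the nodal line, and contains the vertex $B=(0,\tan(\pi/5))$ because every $y$-coordinate appearing in $p_U$ satisfies $y_0\le 30\tan(\pi/5)/64<\tan(\pi/5)$ (and every $x$-coordinate satisfies $x_0\ge 0$). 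Since $B\in\overline{N_U}$, each such rectangle, and therefore $U^{(10)}$ itself, lies entirely in $\overline{N_U}$. Consequently $N_L\subset T\setminus U^{(10)}=D_L(10,(0,0))$, with strict inclusion because $D_L(10,(0,0))$ also contains a neighborhood of the nodal line on the upper side.

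Next I would verify the boundary hypothesis $\partial_N N_L\subset \partial_N D_L(10,(0,0))$ needed for \autoref{lem:harrell}. Both sets lie inside $\partial T$. Since $U^{(10)}\subset \overline{N_U}$, any point of $U^{(10)}\cap \partial T$ lies in $\overline{N_U}\cap \partial T$; this set can meet $\partial_N N_L = \overline{N_L}\cap \partial T$ only at the two common boundary points $(0,y^{\ast})$ and $q^{\ast}$, which are the endpoints of the nodal line and therefore not in $U^{(10)}$ by \autoref{lem:nodalline}. Hence $\partial_N N_L\subset \partial T\setminus U^{(10)}=\partial_N D_L(10,(0,0))$.

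With $N_L\subsetneq D_L(10,(0,0))$ and $\partial_N N_L\subset \partial_N D_L(10,(0,0))$, \autoref{lem:harrell} yields
\[
  \lambda_1\bigl(D_L(10,(0,0))\bigr) < \lambda_1(N_L)=\mu_T.
\]
But the third inequality of \autoref{lem:validated} gives $\lambda_1(D_L(10,(0,0)))>V$, while \autoref{lem:threshold} gives $\mu_T<V$, contradicting the display above. Therefore the nodal line cannot intersect $OB$, completing the proof.
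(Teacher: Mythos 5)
Your proof is correct and follows essentially the same route as the paper's: assume the nodal line hits $OB$, observe that one nodal domain is contained in $D_L(10,(0,0))=T\setminus U^{(10)}$, and derive a contradiction from \autoref{lem:harrell} together with the last inequality of \autoref{lem:validated} and the threshold bound $\mu_T<V$. You supply more detail than the paper does (verifying $U^{(10)}\subset\overline{N_U}$ via convexity of each $R_U$ and membership of $B$, and checking the Neumann-boundary hypothesis of Harrell's lemma explicitly), but the underlying argument is identical.
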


\begin{proof}
    Suppose the nodal line has its endpoints on $AB$ and $OB$ (\autoref{le:monotonic} ensures it ends on $AB$).
    Divide $T$ into two nodal domains of the eigenfunction of $\mu_T$.
    By construction, the smallest mixed eigenvalues of these nodal domains equal $\mu_T$, which is not larger than the threshold $V$. Furthermore,one of these nodal domains is contained in $ T\setminus U^{(10)}$.
    
    Domain monotonicity (\autoref{lem:harrell}) and the last case of \autoref{lem:validated} lead to a contradiction.
\end{proof}
This theorem ensures that the eigenfunction for $\mu_T$ of the right triangle $T$ does not change sign on the shortest side. Its symmetric extension to the regular pentagon is an eigenfunction of that pentagon, which does not change sign on the boundary, proving the remaining case of \autoref{thm:regular}.

\section{The exclusion search algorithm}\label{sec:algorithm}
Our proof for pentagons contains a rather strange collection of domains and associated grid points $p_L$ and $p_U$ defined in \eqref{eq:pL} and \eqref{eq:pU}. 
In \autoref{lem:validated} we check that the smallest mixed eigenvalues of these domains satisfy appropriate inequalities. The lemma does not depend on, or explain, how these exact domains were chosen. In reality, these domains were found by a computer search.  In this section, we present an algorithm which generates these points $p_L$ and $p_U$ and the associated domains based on numerical computations. In the following section we describe how, for a given domain, we validate the assertion of \autoref{lem:validated}.

The proof in the preceding section is a human-readable, computer-assisted proof, and it is worth making a comment about the structure of such proofs.  Of course, all mathematicians appreciate a short, beautiful proof of a theorem, but it is not clear what standard of ``beauty'' should apply to a computer's work.  In this section, we argue that a computer proof should strive to have \emph{small certificates}, be \emph{easily checkable}, and be \emph{adaptable to other situations}, and that our proof has these properties.  We direct the interested reader to~\cite[Ch. 2]{aeqb} for a reference on human-readable, computer-assisted proofs.  

\subsection{Small Certificate}
The computer-generated part of a human-readable proof is called the \emph{proof certificate}.
Because computers can generate a lot of output very quickly, the proof certificates that they generate can be very long and time-consuming to read, unless care is taken to program the computer to search for \emph{short} proofs.   

For example, Appel and Haken's famous proof of the four-color theorem in graph theory~\cite{AH77a,AH77b,AH89} is partly computer-generated; its certificate consists of a certain argument by cases.  This portion of the argument is very long: it occuped a lengthy microfiche supplement to the articles~\cite{AH77a,AH77b}.  A later version of the proof appeared in the book~\cite{AH89}, which is over 700 pages long.  A mathematician who wanted to read their proof would, amongst other tasks, have to inspect all of this work carefully; it is likely that this would take years of full-time work to do.  More recent research on the four-color theorem, such as~\cite{robertson-etal-1997}, has in large part focused on finding computer-generated proofs with shorter, simpler proof certificates.  These proofs are much less time-consuming for a mathematician to read.

We found a rather short proof certificate for our problem: it is the two lists of points \eqref{eq:pL}, \eqref{eq:pU}, together with the upper bound $\mu_{T'} < 12.2483$ in the proof of Lemma 6.7.

\subsection{Checkability}
When possible, a computer-assisted proof should be checkable without requiring specialized software.
To check our proof, a human needs only use standard, validated numerics sofware (such as {\tt INTLAB}~\cite{Ru99a}, for interval arithmetic) to give bounds on the eigenvalues of certain matrices which we describe.  An interested reader can use this, or other standard interval arithmetic software, to check the proof without needing to develop their own specialized code. 

\subsection{Adaptability}
A mathematician should be able to adapt a computer-assisted proof to other problems, without requiring significant changes in the human-written part.
Our proof has this advantage: it is possible to change the grid size $N=64$, or triangle height $b=\tan(\pi/5)$, and re-run our algorithm.  This would produce a new list of points $p_L$ and $p_U$, and a new upper bound for $T'$.  Having made these changes, the reader would select a new threshold $V$ exceeding this upper bound, and then verify the assertions of \autoref{lem:validated} in order to establish the analogue of \autoref{thm:twolongestsides}.

\subsection{The algorithm}

\begin{figure}[t]
  \begin{center}
    
\hspace{\fill}
    \subfloat[Steps (1) and (2): $L^{(k)}$ shaded with red, $U^{(k)}$ magenta and $R_U$ green. Dirichlet boundary conditions on domain $D_U(k, p)$ as red lines, Neumann as blue lines. Test domain $D_L(k, p)$ is the complement of magenta and green.\label{fig:step12}]{
  \begin{tikzpicture}[scale=7]
      \draw (0,0) node [below] {\tiny $O$};
      \draw (0,0.7265625) node [above] {\tiny $B$};
      \draw (1,0) node [below] {\tiny $A$};
    \begin{scope}\draw[clip] (0,0.7265625)  |- (1,0)  --cycle;
\draw[help lines,yscale=0.7265625,opacity=0.5] (0,0) grid[step=0.015625] (1,1);\fill[red,fill opacity=0.2] (0.0, 0.0)--(0.375, 0.0)--(0.375, 0.158935546875)--(0.4375, 0.158935546875)--(0.4375, 0.2611083984375)--(0.484375, 0.2611083984375)--(0.484375, 0.3065185546875)--(0.578125, 0.3065185546875)--(1, 0)--(0, 0) -- cycle;
\fill[magenta,fill opacity=0.2] (0.0, 0.0)--(0.0, 0.1475830078125)--(0.28125, 0.1475830078125)--(0.28125, 0.1929931640625)--(0.3125, 0.1929931640625)--(0.3125, 0.2724609375)--(0.390625, 0.2724609375)--(0.390625, 0.340576171875)--(0.4375, 0.340576171875)--(0.4375, 0.40869140625)--(0, 0.7265625) -- cycle;

\fill[green!50!black,opacity=0.2] (0,0.7265625) rectangle (0.34375, 0.204345703125);
\draw (0.34375, 0.204345703125) node [shape=circle,fill=black,fill opacity=1,minimum size=1.5mm,inner sep=0pt,outer sep=0pt] {} node [below left=-3pt,black,opacity=1] {\tiny $p_U^{(k+1)}$};

\end{scope}\draw[very thick, red](0.0, 0.0)--(0.34375, 0.0)--(0.34375, 0.204345703125)--(0.4375, 0.204345703125)--(0.4375, 0.2611083984375)--(0.484375, 0.2611083984375)--(0.484375, 0.3065185546875)--(0.578125, 0.3065185546875)coordinate (a); \draw[very thick,blue] (a) -- (0,0.7265625) -- (0,0);
\end{tikzpicture}
}
\hspace{\fill}
    \subfloat[Steps (3) and (4): $U^{(k+1)}$ shaded with red, $L^{(k)}$ magenta and $R_L$ green. Dirichlet boundary conditions on domain $D_L(k+1, p)$ as red lines, Neumann as blue lines. Test domain $D_U(k,p)$ is the complement of magenta and green.\label{fig:step34}]{
\begin{tikzpicture}[scale=7]
      \draw (0,0) node [below] {\tiny $O$};
      \draw (0,0.7265625) node [above] {\tiny $B$};
      \draw (1,0) node [below] {\tiny $A$};
\begin{scope}\draw[clip] (0,0.7265625) |- (1,0) --cycle;

\draw[help lines,yscale=0.7265625,opacity=0.5] (0,0) grid[step=0.015625] (1,1);\fill[red,fill opacity=0.2] (0.0, 0.0)--(0.0, 0.1475830078125)--(0.28125, 0.1475830078125)--(0.28125, 0.1929931640625)--(0.3125, 0.1929931640625)--(0.3125, 0.204345703125)--(0.34375, 0.204345703125)--(0.34375, 0.2724609375)--(0.390625, 0.2724609375)--(0.390625, 0.340576171875)--(0.4375, 0.340576171875)--(0.4375, 0.40869140625)--(0, 0.7265625) -- cycle;
\fill[magenta,fill opacity=0.2] (0.0, 0.0)--(0.375, 0.0)--(0.375, 0.158935546875)--(0.4375, 0.158935546875)--(0.4375, 0.2611083984375)--(0.484375, 0.2611083984375)--(0.484375, 0.3065185546875)--(0.578125, 0.3065185546875)--(1, 0)--(0, 0) -- cycle;

\fill[green!50!black,opacity=0.2] (1,0) rectangle (0.40625, 0.2156982421875);
\draw (0.40625, 0.2156982421875) node [shape=circle,fill=black,fill opacity=1,minimum size=1.5mm,inner sep=0pt,outer sep=0pt] {} node [above right=-3pt,black,opacity=1] {\tiny $p_L^{(k+1)}$};

\end{scope}\draw[very thick, red](0.0, 0.1475830078125)--(0.28125, 0.1475830078125)--(0.28125, 0.1929931640625)--(0.3125, 0.1929931640625)--(0.3125, 0.204345703125)--(0.34375, 0.204345703125)--(0.34375, 0.2156982421875)--(0.40625, 0.2156982421875)--(0.40625, 0.340576171875)--(0.4375, 0.340576171875)--(0.4375, 0.40869140625)coordinate (a); \draw[very thick,blue] (a) -- (1,0) -- (0,0) -- (0.0, 0.1475830078125);
\end{tikzpicture}
}
\hspace{\fill}

  \end{center}
  \caption{Algorithm for constructing excluded domains.}
  \label{fig:alg}
\end{figure}
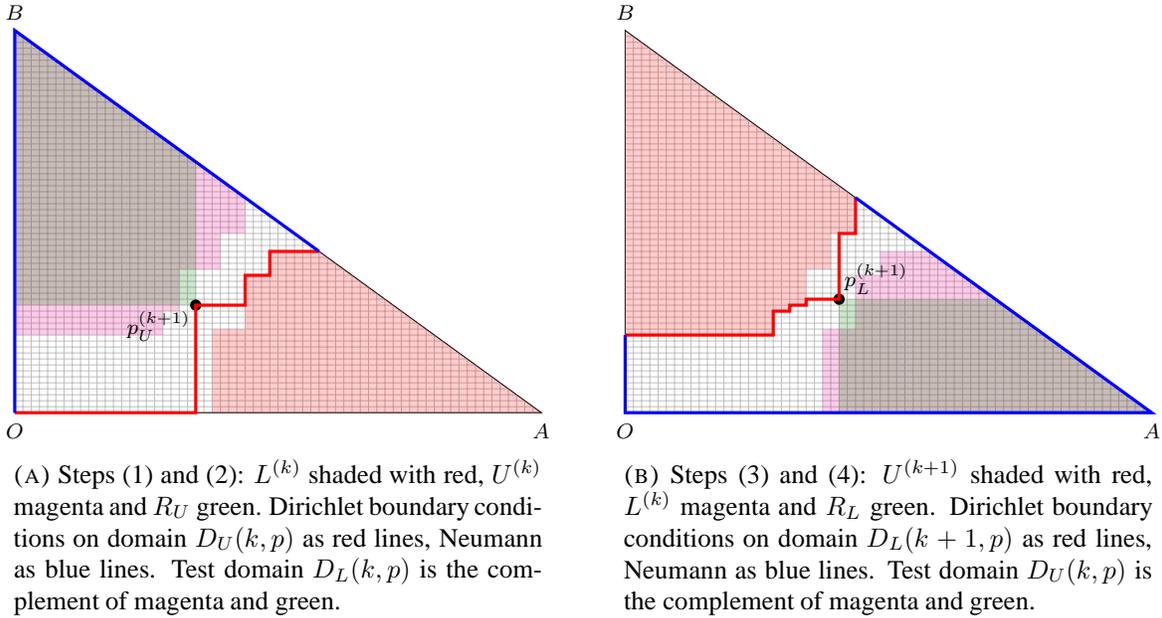
We now present a strategy, Algorithm \ref{alg}, for constructing certain subdomains $D_L, D_U, U^{(k)}, L^{(k)}$, $k=0,..$ of $T$. These are the subdomains for which we shall study the lowest mixed eigenvalues. The algorithm fails if both of the subdomains $L^{(k)}$ and $U^{(k)}$ cannot be constructed for some $k$. The steps of the algorithm are shown on \autoref{fig:alg}.

\begin{algorithm}[h]
\small  
\begin{algorithmic}[1]
\State Begin with $L^{(0)}$ and $U^{(0)}$ and set threshold value $V$ as in the previous section and let $k=0$.
\For{k=0,1,2..}
    \State Let $S=\emptyset$.
    \For{ all points $p$ on the grid $G$ {\it (See Remark \ref{Remark:linearsearch} below)}}
      \State Construct the domain $D_U(k, p)$. 
      \If {$\lambda_1(D_U(k, p))>V$}, add $p$ to the set $S$. \EndIf
     \EndFor
     \If{$S$ is empty}, $U^{(k+1)}:=U^{(k)},p_{U}^{k+1}=(0,0)$
     \Else
       \For{all points $p \in S$} construct the domain $D_L(k, p)$.
       \EndFor
    \State  Let $p_U^{(k+1)}$ be the point $p$ such that the smallest mixed eigenvalue of $D_L(k, p)$ is maximal (over $S$). See \autoref{fig:step12}. Point $p_U^{(k+1)}$ defines $U^{(k+1)}$.
   \EndIf

    \State Let $S=\emptyset$.
   \For{all points $p \in G$ {\it (See Remark \ref{Remark:linearsearch} below)}}
   \State Construct the domain $D_L(k+1, p)$ (we are using the newly created $U^{(k+1)}$).
   \If {$\lambda_1(D_L(k+1, p))>V$} add $p$ to the set $S$. \EndIf
   \EndFor
 
    \If{$S$ is empty and it was empty at step 7} \Return 'Algorithm failed' and exit \EndIf
    \If{$S$ is empty}, $L^{(k+1)}:=L^{(k)},p_{L}^{k+1}=(0,0)$
    \Else
	\For{all points $p \in S$} construct the domain $D_U(k, p)$. Here we are using $L^{(k)}$. 
	\EndFor
	\State Let $p_L^{(k+1)}$ be the point $p$ such that the smallest eigenvalue of $D_U(k, p)$ is maximal. See \autoref{fig:step34}. Point $p_L^{(k+1)}$ defines $L^{(k+1)}$. 
\EndIf
\If{The smallest mixed eigenvalue for $D_L(k, p_U^{(k+1)})$ or $D_U(k, p_L^{(k+1)})$ exceeds $V$} 
\State \Return 'Algorithm is successful' and terminate. 
\Else   $\;k \gets k+1$ and go back to step 2.\EndIf

\EndFor

\end{algorithmic}
\caption{Construction of of subdomains $U$ and $L$.}\label{alg}
\end{algorithm}

\clearpage

\begin{remarks}$\;$

\begin{enumerate}
\item  \label{Remark:linearsearch}   
	Note that it is not necessary to construct $D_U(k, p)$ for every grid point $p$. Indeed, if $p\in S$ then any point from $R_U(p)$ is also in $S$, by domain monotonicity \autoref{lem:harrell}. Similarly if $p\not\in S$, then neither are any points from $R_L(p)$. As a consequence, we only perform a linear search starting at the origin and moving up until we find the first grid point $p\in S$, then move one grid-point right and repeat.  
\item Steps 2 and 4 choose the ``best'' point $p$ to use to expand the excluded regions, with the aim of fine-tuning the algorithm to terminate in as few steps as possible. It is entirely possible that the length of the lists in \eqref{eq:pL} and \eqref{eq:pU} could be reduced by using different strategy.  On the other hand, the above formulation is simple, and more elaborate choices we tried did not lead to significant improvements.

\item We choose to grow the exclusion sets $U$ and $L$ by adding only one point $p$. We could simply add all points $p\in S$ to new exclusion sets. However, for every $p$ we use, we need to supply either an exact eigenvalue or a validated lower bound.  In other words the lists of points $p_L$ and $p_U$ could be replaced with lists of lists of points, and in every step the whole list of points would be used to grow $U^{(k)}$ and $L^{(k)}$.  This is certainly better from a numerical point of view, but this improvement would come at the expense of an extremely long proof certificate.

\end{enumerate}
\end{remarks}

\section{Validated lower bounds}\label{Section:Numerics}
There are two technical difficulties, which we resolve here, in obtaining rigorous numerical bounds for eigenvalues. By considering $T'\subset T\subset T''$ (see (\ref{eq:Tp}-\ref{eq:Tpp})) with rational coordinates in $T'$ and $T''$ we avoid floating point errors due to inexact domain representation. We further map  $T'' \rightarrow T'''$, a similar triangle with perpendicular sides 93 and 128. If $\lambda$ is an eigenvalue of $T'''$, then $128^2\lambda$ is an eigenvalue of $T''$.

Before we describe the validated numerical strategy, we list the sub-domains of $T'''$ for which we will compute the lowest mixed eigenvalues. Next, in \autoref{sec:NonconformingFiniteElements}, we fix notation and definitions for the numerical approximation strategy used.

\subsection{Polygonal domains for which we find validated lower bounds.}$\;$
\label{sec:polygonaldomain}
We now enumerate the upper and lower domains, by listing their vertices.  Note that for ease of reading these are listed on the $(64,64)$ right isosceles triangle $T_{ref}$. To obtain the coordinates of these points in $T'''$, scale the x-coordinate by 2 and the y-coordinate by $2\alpha=93/64$.

The upper domains $D_U(k,p_U^{(k+1)})$ (shown on \autoref{fig:upper}) are created using grid points $p_U$. We only list every other vertex on the step-like (red) portion of the boundary of the polygon and we skip the top vertex $(0, 64)$ (connecting two blue boundary pieces):
{\small
\begin{itemize}
\item[$k=0$]: (0, 0), (25, 24) 
\item[$k=1$]: (0, 0), (20, 17), (28, 21) 
\item[$k=2$]: (0, 0), (28, 30) 
\item[$k=3$]: (0, 0), (18, 13), (24, 14), (28, 21), (31, 27)
\item[$k=4$]: (0, 0), (22, 18), (28, 23), (31, 27), 
\item[$k=5$]: (0, 0), (24, 14), (26, 19), (28, 28), 
\item[$k=6$]: (0, 0), (21, 9),  (24, 14), (26, 19), (28, 23), (29, 29)
\item[$k=7$]: (0, 0), (19, 12), (24, 14), (26, 19), (28, 23), (30, 27)
\item[$k=8$]: (0, 0), (21, 9),  (23, 15), (26, 19), (27, 25), (30, 27)
\item[$k=9$]: (0, 0), (21, 9),  (23, 15), (25, 20), (28, 23), (30, 27), (31, 30)
\end{itemize}
}
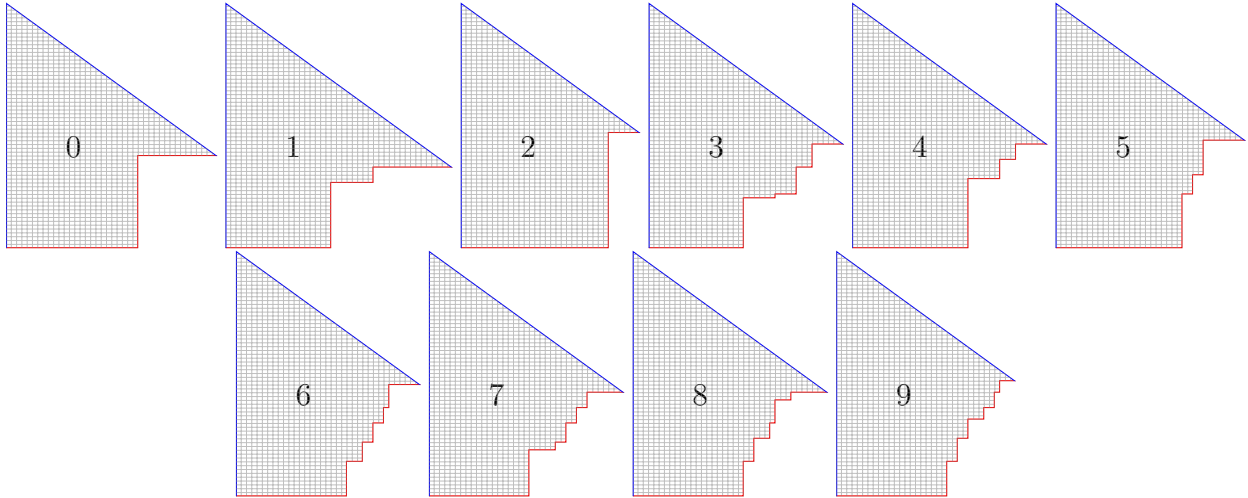
\begin{figure}[t]
  \begin{center}

\begin{tikzpicture}[scale=4.5]
\clip(0.0, 0.0)--(0.390625, 0.0)--(0.390625, 0.2724609375)--(0.625, 0.2724609375) -- (0,0.7265625);
\draw[thick, red](0.0, 0.0)--(0.390625, 0.0)--(0.390625, 0.2724609375)--(0.625, 0.2724609375)coordinate (a);
\draw[thick, blue] (a) -- (0,0.7265625) -- (0,0); 
\draw (0.2,0.3) node {$0$};
\draw[help lines,yscale=0.7265625,opacity=0.5] (0,0) grid[step=0.015625] (1,1);
\end{tikzpicture}
\begin{tikzpicture}[scale=4.5]
\clip(0.0, 0.0)--(0.3125, 0.0)--(0.3125, 0.1929931640625)--(0.4375, 0.1929931640625)--(0.4375, 0.2384033203125)--(0.671875, 0.2384033203125) -- (0,0.7265625);
\draw[thick, red](0.0, 0.0)--(0.3125, 0.0)--(0.3125, 0.1929931640625)--(0.4375, 0.1929931640625)--(0.4375, 0.2384033203125)--(0.671875, 0.2384033203125)coordinate (a);
\draw[thick, blue] (a) -- (0,0.7265625) -- (0,0);
\draw (0.2,0.3) node {$1$};
\draw[help lines,yscale=0.7265625,opacity=0.5] (0,0) grid[step=0.015625] (1,1);
\end{tikzpicture}
\begin{tikzpicture}[scale=4.5]
\clip(0.0, 0.0)--(0.4375, 0.0)--(0.4375, 0.340576171875)--(0.53125, 0.340576171875) -- (0,0.7265625);
\draw[thick, red](0.0, 0.0)--(0.4375, 0.0)--(0.4375, 0.340576171875)--(0.53125, 0.340576171875)coordinate (a);
\draw[thick, blue] (a) -- (0,0.7265625) -- (0,0);
\draw (0.2,0.3) node {$2$};
\draw[help lines,yscale=0.7265625,opacity=0.5] (0,0) grid[step=0.015625] (1,1);
\end{tikzpicture}
\begin{tikzpicture}[scale=4.5]
\clip(0.0, 0.0)--(0.28125, 0.0)--(0.28125, 0.1475830078125)--(0.375, 0.1475830078125)--(0.375, 0.158935546875)--(0.4375, 0.158935546875)--(0.4375, 0.2384033203125)--(0.484375, 0.2384033203125)--(0.484375, 0.3065185546875)--(0.578125, 0.3065185546875) -- (0,0.7265625);
\draw[thick, red](0.0, 0.0)--(0.28125, 0.0)--(0.28125, 0.1475830078125)--(0.375, 0.1475830078125)--(0.375, 0.158935546875)--(0.4375, 0.158935546875)--(0.4375, 0.2384033203125)--(0.484375, 0.2384033203125)--(0.484375, 0.3065185546875)--(0.578125, 0.3065185546875)coordinate (a);
\draw[thick, blue] (a) -- (0,0.7265625) -- (0,0);
\draw (0.2,0.3) node {$3$};
\draw[help lines,yscale=0.7265625,opacity=0.5] (0,0) grid[step=0.015625] (1,1);
\end{tikzpicture}
\begin{tikzpicture}[scale=4.5]
\clip(0.0, 0.0)--(0.34375, 0.0)--(0.34375, 0.204345703125)--(0.4375, 0.204345703125)--(0.4375, 0.2611083984375)--(0.484375, 0.2611083984375)--(0.484375, 0.3065185546875)--(0.578125, 0.3065185546875) -- (0,0.7265625);
\draw[thick, red](0.0, 0.0)--(0.34375, 0.0)--(0.34375, 0.204345703125)--(0.4375, 0.204345703125)--(0.4375, 0.2611083984375)--(0.484375, 0.2611083984375)--(0.484375, 0.3065185546875)--(0.578125, 0.3065185546875)coordinate (a);
\draw[thick, blue] (a) -- (0,0.7265625) -- (0,0);
\draw (0.2,0.3) node {$4$};
\draw[help lines,yscale=0.7265625,opacity=0.5] (0,0) grid[step=0.015625] (1,1);
\end{tikzpicture}
\begin{tikzpicture}[scale=4.5]
\clip(0.0, 0.0)--(0.375, 0.0)--(0.375, 0.158935546875)--(0.40625, 0.158935546875)--(0.40625, 0.2156982421875)--(0.4375, 0.2156982421875)--(0.4375, 0.31787109375)--(0.5625, 0.31787109375) -- (0,0.7265625);
\draw[thick, red](0.0, 0.0)--(0.375, 0.0)--(0.375, 0.158935546875)--(0.40625, 0.158935546875)--(0.40625, 0.2156982421875)--(0.4375, 0.2156982421875)--(0.4375, 0.31787109375)--(0.5625, 0.31787109375)coordinate (a);
\draw[thick, blue] (a) -- (0,0.7265625) -- (0,0);
\draw (0.2,0.3) node {$5$};
\draw[help lines,yscale=0.7265625,opacity=0.5] (0,0) grid[step=0.015625] (1,1);
\end{tikzpicture}
\begin{tikzpicture}[scale=4.5]
\clip(0.0, 0.0)--(0.328125, 0.0)--(0.328125, 0.1021728515625)--(0.375, 0.1021728515625)--(0.375, 0.158935546875)--(0.40625, 0.158935546875)--(0.40625, 0.2156982421875)--(0.4375, 0.2156982421875)--(0.4375, 0.2611083984375)--(0.453125, 0.2611083984375)--(0.453125, 0.3292236328125)--(0.546875, 0.3292236328125) -- (0,0.7265625);
\draw[thick, red](0.0, 0.0)--(0.328125, 0.0)--(0.328125, 0.1021728515625)--(0.375, 0.1021728515625)--(0.375, 0.158935546875)--(0.40625, 0.158935546875)--(0.40625, 0.2156982421875)--(0.4375, 0.2156982421875)--(0.4375, 0.2611083984375)--(0.453125, 0.2611083984375)--(0.453125, 0.3292236328125)--(0.546875, 0.3292236328125)coordinate (a);
\draw[thick, blue] (a) -- (0,0.7265625) -- (0,0);
\draw (0.2,0.3) node {$6$};
\draw[help lines,yscale=0.7265625,opacity=0.5] (0,0) grid[step=0.015625] (1,1);
\end{tikzpicture}
\begin{tikzpicture}[scale=4.5]
\clip(0.0, 0.0)--(0.296875, 0.0)--(0.296875, 0.13623046875)--(0.375, 0.13623046875)--(0.375, 0.158935546875)--(0.40625, 0.158935546875)--(0.40625, 0.2156982421875)--(0.4375, 0.2156982421875)--(0.4375, 0.2611083984375)--(0.46875, 0.2611083984375)--(0.46875, 0.3065185546875)--(0.578125, 0.3065185546875) -- (0,0.7265625);
\draw[thick, red](0.0, 0.0)--(0.296875, 0.0)--(0.296875, 0.13623046875)--(0.375, 0.13623046875)--(0.375, 0.158935546875)--(0.40625, 0.158935546875)--(0.40625, 0.2156982421875)--(0.4375, 0.2156982421875)--(0.4375, 0.2611083984375)--(0.46875, 0.2611083984375)--(0.46875, 0.3065185546875)--(0.578125, 0.3065185546875)coordinate (a);
\draw[thick, blue] (a) -- (0,0.7265625) -- (0,0);
\draw (0.2,0.3) node {$7$};
\draw[help lines,yscale=0.7265625,opacity=0.5] (0,0) grid[step=0.015625] (1,1);
\end{tikzpicture}
\begin{tikzpicture}[scale=4.5]
\clip(0.0, 0.0)--(0.328125, 0.0)--(0.328125, 0.1021728515625)--(0.359375, 0.1021728515625)--(0.359375, 0.1702880859375)--(0.40625, 0.1702880859375)--(0.40625, 0.2156982421875)--(0.421875, 0.2156982421875)--(0.421875, 0.2838134765625)--(0.46875, 0.2838134765625)--(0.46875, 0.3065185546875)--(0.578125, 0.3065185546875) -- (0,0.7265625);
\draw[thick, red](0.0, 0.0)--(0.328125, 0.0)--(0.328125, 0.1021728515625)--(0.359375, 0.1021728515625)--(0.359375, 0.1702880859375)--(0.40625, 0.1702880859375)--(0.40625, 0.2156982421875)--(0.421875, 0.2156982421875)--(0.421875, 0.2838134765625)--(0.46875, 0.2838134765625)--(0.46875, 0.3065185546875)--(0.578125, 0.3065185546875)coordinate (a);
\draw[thick, blue] (a) -- (0,0.7265625) -- (0,0);
\draw (0.2,0.3) node {$8$};
\draw[help lines,yscale=0.7265625,opacity=0.5] (0,0) grid[step=0.015625] (1,1);
\end{tikzpicture}
\begin{tikzpicture}[scale=4.5]
\clip(0.0, 0.0)--(0.328125, 0.0)--(0.328125, 0.1021728515625)--(0.359375, 0.1021728515625)--(0.359375, 0.1702880859375)--(0.390625, 0.1702880859375)--(0.390625, 0.22705078125)--(0.4375, 0.22705078125)--(0.4375, 0.2611083984375)--(0.46875, 0.2611083984375)--(0.46875, 0.3065185546875)--(0.484375, 0.3065185546875)--(0.484375, 0.340576171875)--(0.53125, 0.340576171875) -- (0,0.7265625);
\draw[thick, red](0.0, 0.0)--(0.328125, 0.0)--(0.328125, 0.1021728515625)--(0.359375, 0.1021728515625)--(0.359375, 0.1702880859375)--(0.390625, 0.1702880859375)--(0.390625, 0.22705078125)--(0.4375, 0.22705078125)--(0.4375, 0.2611083984375)--(0.46875, 0.2611083984375)--(0.46875, 0.3065185546875)--(0.484375, 0.3065185546875)--(0.484375, 0.340576171875)--(0.53125, 0.340576171875)coordinate (a);
\draw[thick, blue] (a) -- (0,0.7265625) -- (0,0);
\draw (0.2,0.3) node {$9$};
\draw[help lines,yscale=0.7265625,opacity=0.5] (0,0) grid[step=0.015625] (1,1);
\end{tikzpicture}

  \end{center}
  \caption{Upper domains from the algorithm.}
  \label{fig:upper}
\end{figure}
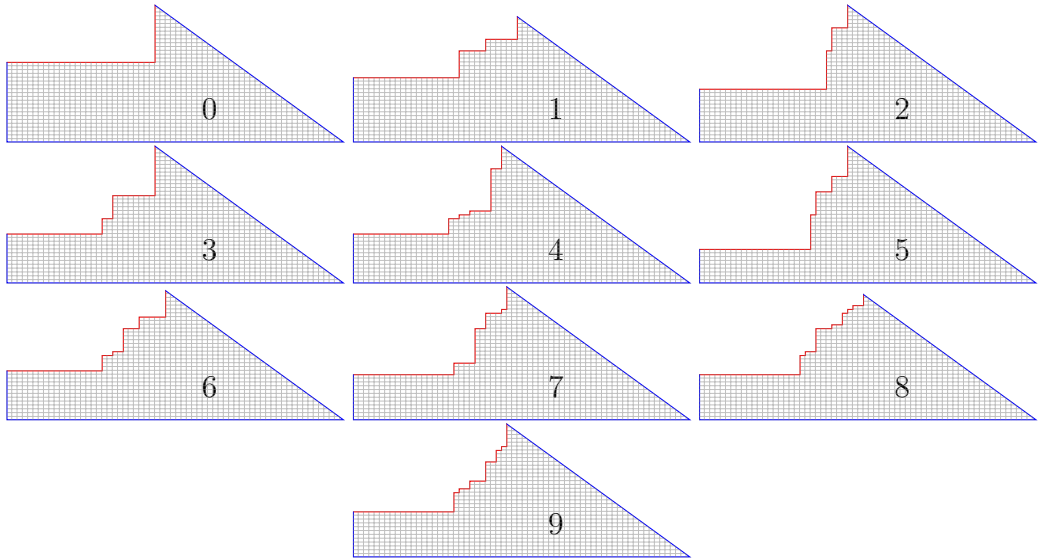
\begin{figure}[t]
  \begin{center}

\begin{tikzpicture}[scale=4.5]
\clip (1,0) -- (0,0) --(0.0, 0.2384033203125)--(0.4375, 0.2384033203125)--(0.4375, 0.40869140625);
\draw[thick, red](0.0, 0.2384033203125)--(0.4375, 0.2384033203125)--(0.4375, 0.40869140625)coordinate (a);
\draw[thick, blue] (a) -- (1,0) -- (0,0) -- (0.0, 0.2384033203125);
\draw (0.6,0.1) node {$0$};
\draw[help lines,yscale=0.7265625,opacity=0.5] (0,0) grid[step=0.015625] (1,1);
\end{tikzpicture}
\begin{tikzpicture}[scale=4.5]
\clip (1,0) -- (0,0) --(0.0, 0.1929931640625)--(0.3125, 0.1929931640625)--(0.3125, 0.2724609375)--(0.390625, 0.2724609375)--(0.390625, 0.3065185546875)--(0.484375, 0.3065185546875)--(0.484375, 0.3746337890625);
\draw[thick, red](0.0, 0.1929931640625)--(0.3125, 0.1929931640625)--(0.3125, 0.2724609375)--(0.390625, 0.2724609375)--(0.390625, 0.3065185546875)--(0.484375, 0.3065185546875)--(0.484375, 0.3746337890625)coordinate (a);
\draw[thick, blue] (a) -- (1,0) -- (0,0) -- (0.0, 0.1929931640625);
\draw (0.6,0.1) node {$1$};
\draw[help lines,yscale=0.7265625,opacity=0.5] (0,0) grid[step=0.015625] (1,1);
\end{tikzpicture}
\begin{tikzpicture}[scale=4.5]
\clip (1,0) -- (0,0) --(0.0, 0.158935546875)--(0.375, 0.158935546875)--(0.375, 0.2724609375)--(0.390625, 0.2724609375)--(0.390625, 0.340576171875)--(0.4375, 0.340576171875)--(0.4375, 0.40869140625);
\draw[thick, red](0.0, 0.158935546875)--(0.375, 0.158935546875)--(0.375, 0.2724609375)--(0.390625, 0.2724609375)--(0.390625, 0.340576171875)--(0.4375, 0.340576171875)--(0.4375, 0.40869140625)coordinate (a);
\draw[thick, blue] (a) -- (1,0) -- (0,0) -- (0.0, 0.158935546875);
\draw (0.6,0.1) node {$2$};
\draw[help lines,yscale=0.7265625,opacity=0.5] (0,0) grid[step=0.015625] (1,1);
\end{tikzpicture}
\begin{tikzpicture}[scale=4.5]
\clip (1,0) -- (0,0) --(0.0, 0.1475830078125)--(0.28125, 0.1475830078125)--(0.28125, 0.1929931640625)--(0.3125, 0.1929931640625)--(0.3125, 0.2611083984375)--(0.4375, 0.2611083984375)--(0.4375, 0.40869140625);
\draw[thick, red](0.0, 0.1475830078125)--(0.28125, 0.1475830078125)--(0.28125, 0.1929931640625)--(0.3125, 0.1929931640625)--(0.3125, 0.2611083984375)--(0.4375, 0.2611083984375)--(0.4375, 0.40869140625)coordinate (a);
\draw[thick, blue] (a) -- (1,0) -- (0,0) -- (0.0, 0.1475830078125);
\draw (0.6,0.1) node {$3$};
\draw[help lines,yscale=0.7265625,opacity=0.5] (0,0) grid[step=0.015625] (1,1);
\end{tikzpicture}
\begin{tikzpicture}[scale=4.5]
\clip (1,0) -- (0,0) --(0.0, 0.1475830078125)--(0.28125, 0.1475830078125)--(0.28125, 0.1929931640625)--(0.3125, 0.1929931640625)--(0.3125, 0.204345703125)--(0.34375, 0.204345703125)--(0.34375, 0.2156982421875)--(0.40625, 0.2156982421875)--(0.40625, 0.340576171875)--(0.4375, 0.340576171875)--(0.4375, 0.40869140625);
\draw[thick, red](0.0, 0.1475830078125)--(0.28125, 0.1475830078125)--(0.28125, 0.1929931640625)--(0.3125, 0.1929931640625)--(0.3125, 0.204345703125)--(0.34375, 0.204345703125)--(0.34375, 0.2156982421875)--(0.40625, 0.2156982421875)--(0.40625, 0.340576171875)--(0.4375, 0.340576171875)--(0.4375, 0.40869140625)coordinate (a);
\draw[thick, blue] (a) -- (1,0) -- (0,0) -- (0.0, 0.1475830078125);
\draw (0.6,0.1) node {$4$};
\draw[help lines,yscale=0.7265625,opacity=0.5] (0,0) grid[step=0.015625] (1,1);
\end{tikzpicture}
\begin{tikzpicture}[scale=4.5]
\clip (1,0) -- (0,0) --(0.0, 0.1021728515625)--(0.328125, 0.1021728515625)--(0.328125, 0.204345703125)--(0.34375, 0.204345703125)--(0.34375, 0.2724609375)--(0.390625, 0.2724609375)--(0.390625, 0.31787109375)--(0.4375, 0.31787109375)--(0.4375, 0.40869140625);
\draw[thick, red](0.0, 0.1021728515625)--(0.328125, 0.1021728515625)--(0.328125, 0.204345703125)--(0.34375, 0.204345703125)--(0.34375, 0.2724609375)--(0.390625, 0.2724609375)--(0.390625, 0.31787109375)--(0.4375, 0.31787109375)--(0.4375, 0.40869140625)coordinate (a);
\draw[thick, blue] (a) -- (1,0) -- (0,0) -- (0.0, 0.1021728515625);
\draw (0.6,0.1) node {$5$};
\draw[help lines,yscale=0.7265625,opacity=0.5] (0,0) grid[step=0.015625] (1,1);
\end{tikzpicture}
\begin{tikzpicture}[scale=4.5]
\clip (1,0) -- (0,0) --(0.0, 0.1475830078125)--(0.28125, 0.1475830078125)--(0.28125, 0.1929931640625)--(0.3125, 0.1929931640625)--(0.3125, 0.204345703125)--(0.34375, 0.204345703125)--(0.34375, 0.2724609375)--(0.390625, 0.2724609375)--(0.390625, 0.3065185546875)--(0.46875, 0.3065185546875)--(0.46875, 0.385986328125);
\draw[thick, red](0.0, 0.1475830078125)--(0.28125, 0.1475830078125)--(0.28125, 0.1929931640625)--(0.3125, 0.1929931640625)--(0.3125, 0.204345703125)--(0.34375, 0.204345703125)--(0.34375, 0.2724609375)--(0.390625, 0.2724609375)--(0.390625, 0.3065185546875)--(0.46875, 0.3065185546875)--(0.46875, 0.385986328125)coordinate (a);
\draw[thick, blue] (a) -- (1,0) -- (0,0) -- (0.0, 0.1475830078125);
\draw (0.6,0.1) node {$6$};
\draw[help lines,yscale=0.7265625,opacity=0.5] (0,0) grid[step=0.015625] (1,1);
\end{tikzpicture}
\begin{tikzpicture}[scale=4.5]
\clip (1,0) -- (0,0) --(0.0, 0.13623046875)--(0.296875, 0.13623046875)--(0.296875, 0.1702880859375)--(0.359375, 0.1702880859375)--(0.359375, 0.2724609375)--(0.390625, 0.2724609375)--(0.390625, 0.31787109375)--(0.4375, 0.31787109375)--(0.4375, 0.3292236328125)--(0.453125, 0.3292236328125)--(0.453125, 0.3973388671875);
\draw[thick, red](0.0, 0.13623046875)--(0.296875, 0.13623046875)--(0.296875, 0.1702880859375)--(0.359375, 0.1702880859375)--(0.359375, 0.2724609375)--(0.390625, 0.2724609375)--(0.390625, 0.31787109375)--(0.4375, 0.31787109375)--(0.4375, 0.3292236328125)--(0.453125, 0.3292236328125)--(0.453125, 0.3973388671875)coordinate (a);
\draw[thick, blue] (a) -- (1,0) -- (0,0) -- (0.0, 0.13623046875);
\draw (0.6,0.1) node {$7$};
\draw[help lines,yscale=0.7265625,opacity=0.5] (0,0) grid[step=0.015625] (1,1);
\end{tikzpicture}
\begin{tikzpicture}[scale=4.5]
\clip (1,0) -- (0,0) --(0.0, 0.13623046875)--(0.296875, 0.13623046875)--(0.296875, 0.1929931640625)--(0.3125, 0.1929931640625)--(0.3125, 0.204345703125)--(0.34375, 0.204345703125)--(0.34375, 0.2724609375)--(0.390625, 0.2724609375)--(0.390625, 0.2838134765625)--(0.421875, 0.2838134765625)--(0.421875, 0.31787109375)--(0.4375, 0.31787109375)--(0.4375, 0.3292236328125)--(0.453125, 0.3292236328125)--(0.453125, 0.340576171875)--(0.484375, 0.340576171875)--(0.484375, 0.3746337890625);
\draw[thick, red](0.0, 0.13623046875)--(0.296875, 0.13623046875)--(0.296875, 0.1929931640625)--(0.3125, 0.1929931640625)--(0.3125, 0.204345703125)--(0.34375, 0.204345703125)--(0.34375, 0.2724609375)--(0.390625, 0.2724609375)--(0.390625, 0.2838134765625)--(0.421875, 0.2838134765625)--(0.421875, 0.31787109375)--(0.4375, 0.31787109375)--(0.4375, 0.3292236328125)--(0.453125, 0.3292236328125)--(0.453125, 0.340576171875)--(0.484375, 0.340576171875)--(0.484375, 0.3746337890625)coordinate (a);
\draw[thick, blue] (a) -- (1,0) -- (0,0) -- (0.0, 0.13623046875);
\draw (0.6,0.1) node {$8$};
\draw[help lines,yscale=0.7265625,opacity=0.5] (0,0) grid[step=0.015625] (1,1);
\end{tikzpicture}
\begin{tikzpicture}[scale=4.5]
\clip (1,0) -- (0,0) --(0.0, 0.13623046875)--(0.296875, 0.13623046875)--(0.296875, 0.1929931640625)--(0.3125, 0.1929931640625)--(0.3125, 0.204345703125)--(0.34375, 0.204345703125)--(0.34375, 0.22705078125)--(0.390625, 0.22705078125)--(0.390625, 0.2838134765625)--(0.421875, 0.2838134765625)--(0.421875, 0.31787109375)--(0.4375, 0.31787109375)--(0.4375, 0.3292236328125)--(0.453125, 0.3292236328125)--(0.453125, 0.3973388671875);
\draw[red, thick](0.0, 0.13623046875)--(0.296875, 0.13623046875)--(0.296875, 0.1929931640625)--(0.3125, 0.1929931640625)--(0.3125, 0.204345703125)--(0.34375, 0.204345703125)--(0.34375, 0.22705078125)--(0.390625, 0.22705078125)--(0.390625, 0.2838134765625)--(0.421875, 0.2838134765625)--(0.421875, 0.31787109375)--(0.4375, 0.31787109375)--(0.4375, 0.3292236328125)--(0.453125, 0.3292236328125)--(0.453125, 0.3973388671875) coordinate (a);
\draw[thick, blue] (a) -- (1,0) -- (0,0) -- (0.0, 0.13623046875);
\draw (0.6,0.1) node {$9$};
\draw[help lines,yscale=0.7265625,opacity=0.5] (0,0) grid[step=0.015625] (1,1);
\end{tikzpicture}

  \end{center}
  \caption{Lower domains from the algorithm.}
  \label{fig:lower}
\end{figure}
Lower domains $D_L(k+1,p_U^{(k+1)})$ (shown on \autoref{fig:lower}) are created using points $p_L$. We again skip some vertices which can be deduces from the rest. 
{\small
\begin{itemize}
\item[$k=0$]:  (28, 21)
\item[$k=1$]: (20, 17), (25, 24), (31, 27)
\item[$k=2$]: (24, 14), (25, 24), (28, 30)
\item[$k=3$]: (18, 13), (20, 17), (28, 23)
\item[$k=4$]: (18, 13), (20, 17), (22, 18), (26, 19), (28, 30)
\item[$k=5$]: (21, 9),  (22, 18), (25, 24), (28, 28)
\item[$k=6$]: (18, 13), (20, 17), (22, 18), (25, 24), (30, 27)
\item[$k=7$]: (19, 12), (23, 15), (25, 24), (28, 28), (29, 29)
\item[$k=8$]: (19, 12), (20, 17), (22, 18), (25, 24), (27, 25), (28, 28), (29, 29), (31, 30)
\item[$k=9$]: (19, 12), (20, 17), (22, 18), (25, 20), (27, 25), (28, 28), (29, 29)
\end{itemize}
}

\subsection{Nonconforming finite elements method}
\label{sec:NonconformingFiniteElements}

Suppose $\Omega$ is any of the polygonal domains listed above in \autoref{sec:polygonaldomain}. Let $\Gamma_D$ be the Dirichlet part of its boundary. Since there is no closed-form expression for the eigenvalues, a lower bound for the smallest mixed eigenvalue $\lambda_1$ must be determined with the aid of approximation theory.

We can reformulate the mixed eigenvalue problem for $(u_j,\lambda_j)$ on the domain in variational form: find the $j$th eigenpair $(u_j,\lambda_j)$ so that for all test functions $v$ in the Sobolev space $H_{\Gamma_D}^1(\Omega)$, $$ \int_{\Omega}\nabla u_j \cdot \nabla v = \lambda_j \int_{\Omega} u_jv.$$
Choose a suitable finite-dimensional function space $V_h$ (where $h>0$ is called the {\it mesh parameter}), and consider  an approximation $u_{j,h} \in V_h$ of $u_j$, which satisfies the {\it discrete eigenvalue problem}: find the $j$th {\it discrete} eigenpair $(u_{j,h},\lambda_{j,h})$ so that for all test functions $v_h \in V_h $, $$ \int_{\Omega}\nabla u_{j,h} \cdot \nabla v_h = \lambda_{j,h} \int_{\Omega} u_{j,h}v_h.$$
By choosing $v_h$ to be basis functions of $V_h$, we obtain a discrete generalized eigenvalue system of the form
\begin{equation}\label{discretesystem}
\mathtt{K}_h\mathtt{u}_{j,h}=\lambda_{j,h}\mathtt{M}_h \mathtt{u}_{j,h}
\end{equation}The matrix $\mathtt{K_h}$ is called the {\it stiffness matrix} and the matrix $\mathtt{M}_h$ is the {\it mass matrix}.

Recall that if the integrals in \eqref{discretesystem} are computed exactly, and $V_h\subset H^1_{\Gamma_D}(\Omega)$, then the method is called {\it conforming}, otherwise it is {\it non-conforming}. 
 For a conforming method it is easy to see from the definition of the Rayleigh-Ritz quotient that $\lambda_{1}\leq \lambda_{1,h}$. However, we want a {\it lower} bound for $\lambda_1$. For this, we shall use a non-conforming method (following \cite{armentano,CG14}).
In what follows, we will be using a the well-known {\it Crouzeix-Raviart} linear non-conforming finite element discretization. 
Let $\hat{T}=QCD$ denote a right-angled triangle which is similar to $OAB''$, with a right angle at $Q$.  Let $|QC|=h, |QC|=\alpha h$ where $\alpha = \frac{93}{128}$. In what follows $h=2$. 
We tesselate a polygonal subdomain $\Omega\subset OAB''$ by a mesh $\Tau = \Tau(h)$ consisting of congruent copies $T$ of $\hat{T}$, ie, $OAB''=\bigcup_{T\in \Tau} T$. 
%This means that the degrees of freedom are evaluated at nodes of the mesh with rational coordinates. 
Recall that a linear function on $\hat{T}$ can be uniquely described by 
specifying its values at three points (these are its 'degrees of freedom'). For the Crouzeix-Raviart method we use, these degrees of freedom are on the (rational) midpoints of the edges of $\hat{T}$.
Let $\mathcal{E}$ denote the collection of all edges in the tesselation, and let $\Gamma_D$ denote the Dirichlet boundary of $\Omega$. 
 The finite-dimensional function space we use is
\begin{eqnarray*}V_n=CR_D^1(\Tau)&:=&\left\{v \in L^2(\Omega)\vert v\vert_{T}\in P_1(T), v \mbox{ is continuous at interior nodes} \right.\\
&& \left.\mbox{and 0 at the nodes on } \mathcal{E}(\Gamma_D)\right\}\end{eqnarray*}

It is obvious, now, that with this choice of $V_h$  the matrices in \eqref{discretesystem} are symmetric and that all entries are rational. Also, the mass matrix $\mathtt{M}_h$ is a diagonal matrix with diagonal blocks $\frac{h^2\alpha}{6} \mathtt{I}_{3\times3}.$ In our tesselation,  it is clear that each node $x_k$ is adjacent to at most 4 other nodes in $\Tau$ (recall that in the Crouzeix-Raviart discretization, each interior node lies at the midpoints of the sides of two triangles in the mesh, and its adjacent nodes are the midpoints of the other sides of those triangles). Therefore, $\mathtt{K}_h$ will have atmost 5 non-zero entries per row and column. By examining the {\it local stiffness matrix} for $\hat{T}\subset \Omega$, 
$$ [\hat{K}_h] = \left[\int_{\hat{T}} \nabla \phi_i\cdot \nabla \phi_j\right] = \left(\begin{matrix}
{2}/{\alpha} &-{2}/{\alpha}&0\\
-{2}/{\alpha}& 2(\alpha +{1}/{\alpha})&-2\alpha\\
0&-2\alpha & 2\alpha
\end{matrix}\right).$$
The entries of the full stiffness matrix $\mathtt{K}_h$ are built from these local contributions. The diagonal entries of $\mathtt{K}_h$ are, therefore, equal to $2\alpha,4\alpha, 2/\alpha, 4/\alpha, 2(\alpha + 1/\alpha), \text{ or }4(\alpha+1/\alpha)$. The off-diagonal terms are equal to $\pm 2\alpha$ or $\pm 2/\alpha$.
 The simple structure of $\mathtt{M}_h$ allows us to write
\begin{equation}\label{rationalmatrix}
 \mathtt{K}_h\mathtt{u}_{j,h} = \lambda_{j,h}\mathtt{M}_h \mathtt{u}_{j,h} = \frac{h^2\alpha}{6}\lambda_{j,h} \mathtt{B}_h \mathtt{u}_{j,h}\end{equation} where $\mathtt{B_h}$ is diagonal with 1 or 2 as the only nonzero entries. This matrix $\mathtt{B}_h$ is easy to factor and to invert. We also define a scaled version of the stiffness matrix which has integer entries $\mathtt{C_h} := (93)(64)\mathtt{K_h}$. We can then rewrite \eqref{rationalmatrix} as
 \begin{equation}\label{discretematrix}
 \mathtt C_h \mathtt u_{j,h} = (93)(64) \frac{h^2 \alpha}{6}\lambda_{j,h} \mathtt B_h\mathtt u_{j,h}= =\frac{93^2}{3}\lambda_{j,h} \mathtt{B}_{h}\mathtt{u}_{j,h}.
 \end{equation}

  We denote by $\mathtt{D}_h:=(\mathtt{B}_h)^{-1/2}$. This is a diagonal matrix with $1$ or $1/\sqrt{2}$ on the diagonals. With this notation we can rewrite the generalized eigenvalue problem in \eqref{rationalmatrix} to 
\begin{equation}\label{rescaleddiscrete}
    \mathtt{P}_h\mathtt{u}_{j,h}\equiv(\mathtt{D}_h\mathtt{K}_h \mathtt{D}_h) \mathtt{u}_{j,h}=\frac{h^2\alpha}{6} \lambda_{j,h} \mathtt{u}_{j,h} \equiv l_{j,h} \mathtt{u}_{j,h}
\end{equation}
The scaling in $h$ above is typically important from the point of view of performing stable computations; the conditioning of the original discrete system deteriorates with $h$.  However, note that we use the fixed value $h=2$.

Each of the stiffness matrices $\mathtt{K}_h$ and mass matrices $\mathtt{M}_h$ for $T'''$ and for the various subdomains $D_U(\cdot)$, $D_L(\cdot)$ has roughly $4000$ rows and columns, so we do not reproduce these matrices here.  Rather, we describe how the matrices were constructed. 
We first used {\tt FEniCS} \cite{LMW} to assemble the stiffness and mass matrices on $T'''$ (the full triangle, not the subdomains). We rescaled the resulting matrices to get integer entries, and transferred the results to a {\tt scipy.sparse} \cite{scipy} matrix for further processing, including handling of boundary conditions.  For the subdomains, to enforce the shape of a subdomain and the Dirichlet condition, we set the relevant degrees of freedom to $0$, forcing the solutions to equal $0$ on the complements of the subdomains from \autoref{lem:validated}. In practice, we accomplish this by removing the rows and columns for these degrees of freedom from both mass and stiffness matrices.

\subsection{Generalization of the validated nonconforming lower bounds.}

We now describe how to transform the above numerical scheme into rigorous lower bounds for mixed Dirichlet-Neumann eigenvalues. First, for the purpose of validated numerics, the entries of the matrices $\mathtt{K}_h, \mathtt{M}_h$ should be rational numbers. This can be done by ensuring the vertices of the domains above are rational, and by defining $V_h$ appropriately, which was done in the preceding section.  Next, the eigenvalues of large discrete systems such as \eqref{discretesystem} cannot be found in closed form. We must therefore ensure any approximation errors incurred are controlled in a manner that the desired lower bound is robust. We present three approaches in \autoref{sec:matrixlowerbound}~and~\autoref{sec:constructivelowerbound} below.

We begin by surveying the literature on such rigorous lower bounds.  In \cite{armentano}, an {\it asymptotic} result shows that provided $h$ is small enough, $\lambda_{1,h}$ computed using a (specific) non-conforming method provides a lower bound for the first eigenvalue $\lambda_1$ for the Dirichlet problem. It is only recently that a {\it computable}  lower bound based on a non-conforming finite element method for the first Dirichlet eigenvalue of the Laplacian on a polygonal domain has become available, in \cite{CG14}. This is achieved by approximating the first Dirichlet eigenfunctions using piecewise linear Crouzeix-Raviart elements, that is, approximating the first eigenfunction by functions from 
\begin{eqnarray*}CR_0^1(\Tau)&:=\left\{v \in L^2(\Omega)\vert v\vert_{T}\in P_1(T), v \mbox{ continuous at the midpoints of interior edges} \right.\\
& \left.\mbox{and 0 at the midpoints of boundary edges}\right\}\end{eqnarray*}
Suppose $CR_0^1(\Tau)$ is used to construct the generalized eigenvalue problem of form \eqref{discretesystem} for the pure Dirichlet problem. Barring very special circumstances, the eigenvalues of this system cannot be found in closed form. 
Therefore, suppose we use an iterative approximation technique to obtain an {\it approximation} $\tilde{\lambda}_{CR,1}$ to $\lambda_{1,h}$ (which is, itself, an approximation to $\lambda_1$). 
The authors of \cite{CG14} establish the computable inequality Theorem 3.1 of their paper: if $(\tilde{\lambda}_{CR,1}, \tilde{u}_{CR,1})\in \mathbb{R}\times CR_0^1(\Tau)$ approximate the first eigenpair $(\lambda_1, u_1)$ (with eigenfunction normalized to 1), 
if $\mathbf{r}$ is the algebraic residual, and if $\tilde{\lambda}_{CR,1}$ is closer to the first true discrete eigenvalue $\lambda_{CR,1} $ than to any other discrete eigenvalue (see \cite[Lemma 3.8]{CG14}), then 
    
    \begin{equation}\label{eqn:Carstensen}
	\frac{\tilde{\lambda}_{CR,1}-|\mathbf{r}|_{B^{-1}}}{1+\kappa^2(\tilde{\lambda}_{CR,1}-|\mathbf{r}|_{B^{-1}})H^2}\leq \lambda_1.
\end{equation} 

Here, $H$ is the maximal diameter in the regular triangulation $\Tau$ and $\kappa$ is a universal constant given by $\kappa^2 = \frac{1}{8} + j_{1,1}^{-2} \leq 0.1931$ for the first positive root $j_{i,1}$ of the Bessel function of the first kind.

In our problem, we need to establish a similar lower bound for the first eigenvalue of the Laplacian with {\it mixed} data. We need to therefore extend Theorem 3.1 from \cite{CG14} to this case. Specifically, we seek approximations $u_{CR,1}\in CR_D^1(\mathcal{T})$ for the first eigenfunction in a  mixed eigenvalue problem.

The non-conforming interpolant used to establish the interpolation estimate of Theorem 2.1 in \cite{CG14} remains unaltered except for the obvious change in domain and range: $\mathcal{I}_{NC}:H^1_{\Gamma_D}\rightarrow CR^1(\Tau)$ is defined on all edges $E$ which are not on $\Gamma_D$ as:
$$ I_{NC}v(mid(E)):=\frac{1}{|E|}\int_E v\, ds.$$ Theorem 2.1 also holds, with $\kappa$ unaltered. Recall that this $\kappa$ is the optimal Poincar\'e constant on a triangle \cite{LSminN}. Provided $\Gamma_D$ is non-empty, the $H^1(D)$ semi-norm remains a full norm (a fact used in the argument of \cite{CG14}). Therefore, the same inequality \eqref{eqn:Carstensen} holds.

Note, the lower bounds by Carstensen and Gedicke are conditional on algebraic eigenvalue approximation being close the smallest true eigenvalue (see \cite[Lemma 3.8]{CG14}), and this condition is not easy to validate. Nevertheless, their Theorem 3.2, relating exact discrete eigenvalue $\lambda_{CR,1}$ and $\lambda_1$, generalizes unconditionally to mixed eigenvalues:
\begin{lemma} Let $(\lambda_{CR,1})$ be the true eigenvalue corresponding to a shape-regular Crouzeix-Raviart discretization of the mixed eigenvalue problem for the Laplacian on a polygonal domain, with maximal mesh size $H$. Let $\lambda_1$ be the first eigenvalue of the mixed eigenvalue problem on this domain. Then 
\begin{align}
\label{eqn:nilima-lower-bound}
    \frac{\lambda_{CR,1}}{1+\kappa^2\lambda_{CR,1}H^2}\le \lambda_1.
\end{align}
Moreover, if $\tilde{\lambda}_{CR,1}$ is an approximation to the discrete eigenvalue $\lambda_{CR,1}$ which is closer to $\lambda_{CR,1}$ than to $\lambda_{CR,2}$, then again  \begin{equation}
	\frac{\tilde{\lambda}_{CR,1}-|\mathbf{r}|_{B^{-1}}}{1+\kappa^2(\tilde{\lambda}_{CR,1}-|\mathbf{r}|_{B^{-1}})H^2}\leq \lambda_1.
\end{equation}

\end{lemma}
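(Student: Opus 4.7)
The plan is to verify that the proof of \cite[Theorem 3.2]{CG14} transports verbatim to the mixed Dirichlet-Neumann setting, because every ingredient in that proof is either local to a single triangle (hence insensitive to the imposed outer boundary data) or is a purely matrix-theoretic statement about a symmetric generalized eigenvalue problem. Three preliminary verifications suffice. First, the Crouzeix-Raviart interpolant $\mathcal{I}_{NC}$ maps $H^1_{\Gamma_D}(\Omega)$ into $CR_D^1(\Tau)$: for any $u\in H^1_{\Gamma_D}$ and any edge $E\subset\Gamma_D$, the trace of $u$ on $E$ vanishes, so the edge average $|E|^{-1}\int_E u\,ds$ that defines $(\mathcal{I}_{NC}u)(\mathrm{mid}(E))$ is zero. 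Second, the Poincar\'e-type interpolation estimate $\|u - \mathcal{I}_{NC}u\|_{L^2(T)} \le \kappa H \|\nabla(u - \mathcal{I}_{NC}u)\|_{L^2(T)}$ is a single-triangle inequality with sharp constant $\kappa$ from \cite{LSminN}, and the broken-gradient orthogonality $\int_T \nabla(u - \mathcal{I}_{NC}u)\cdot\nabla w_h\,dx = 0$ for all $w_h\in CR_D^1(\Tau)$ reduces on each $T$ to $\nabla w_h|_T$ being constant combined with the midpoint rule on $\partial T$; neither of these facts notices the boundary data. Third, $\Gamma_D$ has positive surface measure, so the $H^1$-seminorm is a norm on $H^1_{\Gamma_D}$ and both $\lambda_1$ and $\lambda_{CR,1}$ admit the usual Rayleigh-Ritz characterizations over $H^1_{\Gamma_D}$ and $CR_D^1(\Tau)$ respectively.

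With these in hand, the argument runs as in \cite{CG14}. Let $u$ be a normalized continuous first eigenfunction, so $\|\nabla u\|_{L^2(\Omega)}^2 = \lambda_1$, and set $v_h := \mathcal{I}_{NC} u \in CR_D^1(\Tau)$. The orthogonality verified above yields the Pythagorean identity $\lambda_1 = \sum_T \|\nabla(u - v_h)\|_{L^2(T)}^2 + \sum_T \|\nabla v_h\|_{L^2(T)}^2$. The interpolation estimate gives $\sum_T \|\nabla(u-v_h)\|_{L^2(T)}^2 \ge \|u-v_h\|_{L^2}^2/(\kappa H)^2$, and the Rayleigh-Ritz characterization of $\lambda_{CR,1}$ gives $\sum_T \|\nabla v_h\|_{L^2(T)}^2 \ge \lambda_{CR,1}\|v_h\|_{L^2}^2$. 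Combined with the reverse triangle inequality $\|v_h\|_{L^2} \ge |1 - \|u-v_h\|_{L^2}|$, this produces $\lambda_1 \ge t^2/(\kappa H)^2 + \lambda_{CR,1}(1-t)^2$ with $t := \|u-v_h\|_{L^2}$; minimizing the right-hand side in $t$ delivers $\lambda_1 \ge \lambda_{CR,1}/(1 + \kappa^2 \lambda_{CR,1} H^2)$, the first asserted inequality. For the second, substitute the residual bound $\lambda_{CR,1} \ge \tilde\lambda_{CR,1} - |\mathbf{r}|_{B^{-1}}$ from \cite[Lemma~3.8]{CG14} into the first. That residual bound is a pure statement about symmetric matrix pencils, requires only the spectral-gap hypothesis on $\tilde\lambda_{CR,1}$, and is indifferent to how the matrices $\mathtt{K}_h, \mathtt{M}_h$ were assembled.

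There is no deep analytic obstacle; the entire proof is an audit that every step of Carstensen-Gedicke is either triangle-local (interpolation, orthogonality, and the optimal Poincar\'e constant) or a purely algebraic statement about the discrete symmetric pencil (the residual bound). The only mildly delicate point is confirming that the sharp constant $\kappa$ in the interpolation estimate is unaltered in the mixed-data setting, which is exactly the subject of \cite{LSminN} and has already been observed by the authors in the paragraph immediately preceding the lemma.
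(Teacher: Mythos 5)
Your proposal is correct and follows the same route as the paper: both verify the three Carstensen--Gedicke ingredients---the interpolant lands in $CR_D^1(\Tau)$ because edge averages of traces on $\Gamma_D$ vanish, the triangle-local interpolation estimate holds with the same optimal Poincar\'e constant $\kappa$ from \cite{LSminN}, and $\Gamma_D\neq\emptyset$ makes the $H^1$-seminorm a norm---and then transport the argument of \cite{CG14} unchanged. You have written out the Pythagorean decomposition, the reverse triangle inequality, and the one-variable minimization that the paper only cites, so your version is more self-contained but is not a different proof.
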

Furthermore, the left hand side of~\eqref{eqn:nilima-lower-bound} is increasing in $\lambda$ allowing us to find a lower bound for $\lambda_1$ using a lower bound for the discrete eigenvalue $\lambda_{CR,1}$. In particular 
\begin{align}\label{lowereigbound}
    \Lambda< \lambda_{CR,1}\quad\Longrightarrow\quad\frac{\Lambda}{1+\kappa^2\Lambda H^2}< \lambda_1.
\end{align}
Note that we have $H^2=(h/64)^2+(\alpha h/64)^2$ and $\kappa^2\leq0.1931$ (recalling that we take $h = 2$ and $\alpha = 93/128)$. 
To obtain a proof of \autoref{lem:validated}, we need $V=12.25<\lambda_1$. Therefore it is enough to show that $\lambda_{CR,1}\ge 12.25+3/256=:\Lambda$. 
This choice of $\Lambda$ is made such that if we multiply both sides of the discrete eigenvalue problem by a power of $2$, then all matrices involved in (26) have integer entries. No floating point errors are incurred at the stage of matrix assembly.

As is standard for discrete eigenvalue computations, the quality of approximation to the first discrete eigenvalue $\tilde{\lambda}_{CR,1}$ depends (inversely) on the spectral gap between the first  and second discrete eigenvalues, $\lambda_{CR,2}-\lambda_{CR,1}$.

\subsection{Lower bound for the smallest eigenvalue of a sparse matrix}
\label{sec:matrixlowerbound}
Here we present a method of finding a verifiable lower bound for the lowest eigenvalue of a sparse rational matrix, without computing any eigenvalues. We use this method on the stiffness and mass matrix systems described above, and the domains from \autoref{lem:validated}. 

We have several generalized eigenvalue problems $Kx=\lambda_{CR,1}Mx$, one for each domain from \autoref{lem:validated}. We want to show that the smallest eigenvalue of each exceeds $\Lambda$. Instead of finding an approximate lowest eigenvalue (and checking that it is actually the smallest), we may prove that $K - \Lambda M$ is positive definite.
$K-\Lambda M$ is positive definite if and only if $\sqrt{M^{-1}}K\sqrt{M^{-1}}-\Lambda I$ is positive definite. If the latter is positive definite than the smallest eigenvalue of $\sqrt{M^{-1}}K\sqrt{M^{-1}}$ is larger than $\Lambda$, and it is the same as the smallest eigenvalue for the generalized system $Kx-\gamma Mx$.

\begin{proposition}
The matrices $K-\Lambda M$ are all positive definite.
\end{proposition}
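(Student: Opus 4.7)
The plan is to reduce the positive-definiteness of each $K-\Lambda M$ to a finite-precision computation which can be carried out rigorously using validated numerics on rational (or interval-rational) data. The decisive observation, already set up in the excerpt, is that the mass matrix $\mathtt{M}_h$ is block-diagonal with very simple rational entries, so that after the rescaling in \eqref{rescaleddiscrete} the test reduces to checking positive-definiteness of a single symmetric matrix with integer (or rational) entries per domain.

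First I would restate the criterion. Setting $\Lambda = 12.25 + 3/256$ and recalling $\mathtt{M}_h = \tfrac{h^2\alpha}{6}\mathtt{B}_h$ with $\mathtt{B}_h$ diagonal with entries in $\{1,2\}$, the matrix
\[
\mathtt{A} \;:=\; \mathtt{K}_h - \tfrac{h^2\alpha}{6}\,\Lambda\,\mathtt{B}_h
\]
has rational entries whose denominators we can clear by a single integer scaling (all entries of $\mathtt{K}_h$ are rational multiples of $\alpha = 93/128$, and $\Lambda$ has denominator $256$). So the problem becomes: show that a specific, explicit symmetric \emph{integer} matrix of size roughly $4000 \times 4000$ is positive definite, and repeat this for the $20$ domains of \autoref{lem:validated}.

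Next I would carry out three independent validated verifications, exactly as advertised in the introduction, for redundancy: (i) an exact $LU$ (equivalently $LDL^\top$) factorization using arbitrary-precision rational arithmetic, where positive-definiteness is certified by checking that every pivot is strictly positive and no pivot vanishes, (ii) an $LDL^\top$ factorization carried out in interval arithmetic, so that if every interval diagonal entry of $D$ lies strictly in $(0,\infty)$ the claim is proved, and (iii) reduction to Hessenberg (in fact tridiagonal, since $\mathtt{A}$ is symmetric) form in interval arithmetic followed by a Sturm sequence sign-count on the characteristic polynomial evaluated at $0$, showing it has no roots in $(-\infty,0]$. Any one of these suffices; running all three guards against software bugs.

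The main obstacle is practical rather than conceptual: the matrices are large enough that naive exact-rational elimination is expensive (fill-in destroys the at-most-five nonzeros per row of $\mathtt{K}_h$ after a few elimination steps), so one must rely on the sparse integer structure and interval pivoting to keep the computation tractable and the enclosures tight. For this reason the interval $LDL^\top$ pass is the workhorse; the exact rational pass, restricted to the cases it can handle, serves as an independent cross-check, and the Sturm/Hessenberg pass provides a third, structurally different certificate. Once each of the $20$ symmetric integer matrices is certified positive definite by at least one of these methods, the proposition follows immediately, and with it the three inequalities of \autoref{lem:validated} via \eqref{lowereigbound}.
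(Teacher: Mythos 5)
Your proposal is correct and follows essentially the same route as the paper: clear denominators to reduce to a symmetric rational/integer matrix, then certify positive definiteness via validated factorizations (the paper's proof of this proposition in particular uses a fill-reducing permutation from CHOLMOD followed by an interval-arithmetic $LDL'$ and an exact rational $LU$ check that all pivots are positive, while the Hessenberg/Sturm route you also mention appears in the paper as a separate, independent verification in the subsequent subsection). The only cosmetic difference is organizational, not mathematical.
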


\begin{proof}
Since $\Lambda$ has been chosen to be a rational number, this is a routine check with standard software.  For the diligent reader who wishes to check this on their own, there are several techniques, all of which we have tried successfully: one can use the $LU$ and $LDL'$ decomposition in interval arithmetic, or one can compute an exact LU decomposition with rational arithmetic.  Here are the details of how to check the proposition with standard software.

The matrix $K-\Lambda M$ has rational entries and is sparse. We used {\tt CHOLMOD} \cite{cholmod} to find a fill-reducing permutation (a matrix $P$ which makes the $LDL'$ decomposition of $P(K-\gamma M)P'$ as sparse as possible). Then we find the $LDL'$ or $LU$ decomposition of the permuted matrix. A matrix is positive definite if all diagonal entries in $D$ (or, respectively, in $U$) are positive.

To do the computation with interval arithmetic, we used the {\tt mpmath.mpi} \cite{mpmath} interval arithmetic library and the modified {\tt sympy} \cite{sympy} function {\tt SparseMatrix.LDLdecomposition} to check positivity of the entries in $D$.  To do the LU decomposition with exact rational arithmetic, we used sparse matrices from {\tt sage} \cite{sage} and its built-in LU method.  The second approach gives exact values for the diagonal entries of $U$, and again, all entries are positive.
\end{proof}

% % %

\subsection{Strategy for computing discrete eigenvalues}\label{validatedcomputing}
\label{sec:constructivelowerbound}

The nonconstructive method described the previous section already provides two ways to prove \autoref{lem:validated}. Nevertheless, in this section, we also present a constructive approach.  As discussed above, the eigenvalues of the discrete system \eqref{rescaleddiscrete} are not available in closed form. An iterative method will be required to compute an approximation to the eigenvalues.  We then convert the resulting approximation to bounds for the eigenvalues using interval arithmetic.

\begin{proposition}
    The smallest two mixed eigenvalues for the domains $D_L(\cdot)$, $D_U(\cdot)$ lie within the ranges shown in \autoref{table:upper domains 1st and 2nd},~\ref{table:lower domains 1st and 2nd}.%~\ref{table:upper domains 1st}~and~\ref{table:lower domains 1st}.
\end{proposition}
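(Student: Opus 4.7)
The plan is to bound the two smallest generalized eigenvalues $\lambda_{CR,1}\le\lambda_{CR,2}$ of $K_h x = \lambda M_h x$ on each of the $20$ polygonal subdomains by a direct application of Sylvester's law of inertia to shifted rational matrices. For any rational $\mu$ that is not itself a generalized eigenvalue, the number of eigenvalues strictly below $\mu$ equals the number of negative diagonal entries in an $LDL^\top$ (or pivoted $LU$) decomposition of the symmetric rational matrix $K_h-\mu M_h$. Hence, to certify $\lambda_{CR,1}\in[a_1,b_1]$ and $\lambda_{CR,2}\in[a_2,b_2]$ it suffices to verify exactly four inertia counts per domain: zero negative entries at the shift $\mu=a_1$, exactly one at each of $\mu=b_1$ and $\mu=a_2$, and exactly two at $\mu=b_2$. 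This upgrades the single positive-definiteness check of the preceding subsection to the two-eigenvalue version needed here.

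Each such check is structurally identical to the one already performed in the previous proposition and can be carried out with any of the three redundant tools the paper already uses: interval-arithmetic $LDL^\top$ (\texttt{mpmath.mpi} together with the modified \texttt{sympy.SparseMatrix.LDLdecomposition}), exact $LU$ over $\mathbb{Q}$ (\texttt{sage}), and the Hessenberg/tridiagonal reduction in interval arithmetic combined with Sturm-sequence sign counting. All three rest on the same mathematical fact (Sylvester's law of inertia, applied either directly to $K_h-\mu M_h$ or, after symmetric rescaling, to the interval tridiagonalization of $P_h-\tfrac{h^2\alpha}{6}\mu I$), but are implemented on disjoint code paths, providing insurance against software errors. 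To choose the rational interval endpoints, a preliminary non-validated Lanczos/Arnoldi run on each discrete problem supplies floating-point approximations to $\lambda_{CR,1}$ and $\lambda_{CR,2}$; the four endpoints per domain are then taken as nearby rationals with small denominators, and the \texttt{CHOLMOD} fill-reducing permutation from the previous proof keeps the cost of exact decompositions on the $\sim\!4000\times 4000$ sparse systems under control.

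The main obstacle is not the certification step itself, which is mechanical once the endpoints are fixed, but a tightness-versus-feasibility tension in the choice of those endpoints. On one hand, the intervals must be narrow enough and well enough separated that the computed approximation $\tilde\lambda_{CR,1}$ lies closer to $\lambda_{CR,1}$ than to $\lambda_{CR,2}$, so that the proximity hypothesis of \cite[Lemma 3.8]{CG14} used in \autoref{sec:NonconformingFiniteElements} applies, and that the endpoint $a_1$ remains large enough for \eqref{lowereigbound} to yield a continuous lower bound exceeding the threshold $V=12.25$. On the other hand, the denominators of $a_j,b_j$ must be small enough that the exact rational $LU$ (or interval $LDL^\top$) decomposition of each shifted sparse matrix terminates in reasonable time without catastrophic fill-in. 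The non-validated floating-point computations confirm that a sufficient spectral gap is present on each of the $20$ domains, so in practice the proposition reduces to running this finite list of inertia checks.
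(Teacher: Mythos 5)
Your proposal is correct but takes a genuinely different computational route from the paper's. The paper's proof for this proposition proceeds in two stages: (i) interval-arithmetic tridiagonalization of $\mathtt P_h$ followed by bisection using the Sturm-sequence property, which certifies that no eigenvalue is missed, that $\lambda_{1,h}$ is simple, and that the gap $\lambda_{2,h}-\lambda_{1,h}$ is large; and (ii) a Lanczos iteration on the generalized problem to produce a high-accuracy approximate eigenpair, post-processed by INTLAB's \texttt{VerifyEig}, which wraps a Banach contraction-mapping argument (Rump's verification method) around the approximate eigenpair to emit the tight enclosures $\bar\lambda_{1,h}\pm\epsilon$ with $\epsilon\approx10^{-11}$ reported in the tables. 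Your approach instead promotes the single positive-definiteness check of the preceding subsection to four inertia counts per domain, via $LDL^\top$ or exact rational $LU$ on the shifted sparse matrix $K_h-\mu M_h$. Mathematically this is the same Sylvester-inertia fact that underlies the Sturm-sequence count (you say so yourself), but you apply it directly on the sparse system rather than after a dense tridiagonalization, and you dispense with the Lanczos-plus-\texttt{VerifyEig} refinement entirely. This buys elementarity and preserves sparsity throughout (the tridiagonalization in the paper's route destroys it), at the cost of needing many bisection steps -- hence many fresh factorizations -- if one wants the intervals as narrow as those in the tables; the paper's \texttt{VerifyEig} step gets that tightness in a single post-processing pass.

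One small misalignment: you appeal to the proximity hypothesis of \cite[Lemma 3.8]{CG14} as a constraint on the choice of $a_1,b_1$. That hypothesis governs the conditional Carstensen--Gedicke bound \eqref{eqn:Carstensen} used to pass from discrete to continuous eigenvalues (and the paper's \autoref{lem:validated} uses the unconditional variant \eqref{lowereigbound} precisely to sidestep it), but it is irrelevant to the proposition at hand, which asserts only the enclosures for the \emph{discrete} eigenvalues. Your inertia counts are unconditional; they need no proximity hypothesis, and no reference to the threshold $V$, to establish what this proposition claims.
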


Using interval arithmetic, we first convert $\mathtt{P}_h$ to tridiagonal form (with intervals for each of the non-zero matrix entries), and then use a bisection method to obtain an approximation to $\frac{h^2\alpha}{6}\lambda_{1,h}$. This method uses the Sturm sequence property~\cite{TB97}, and allows us to locate other eigenvalues as well. In particular, we also obtain validated approximations to $\lambda_{2,h} $.

Then, using interval arithmetic, the Lanczos method~\cite{La50,GvL} on $\mathtt{P}_h $, and a validated nonlinear solver we obtain an approximation with error bound for $\lambda_{1,h}$.

For the latter two calculations, we use an interval arithmetic calculation in {\tt INTLAB} \cite{Ru99a}. Specifically, we begin with the rational matrices $\mathtt{K}_h,\mathtt{M}_h$; all subsequent matrix operations are done within interval arithmetic.

The use of the bisection method and the Sturm sequence property ensures that we have {\it no} missing eigenvalues in the initial part of the spectrum of \eqref{rescaleddiscrete}, and that the first two discrete eigenvalues are seperated: $0<l_{1,h}< l_{2,h}\leq l_{3,h}$. As it standard with this method, the convergence rate is slow. In each of the subdomains under consideration, we used this method to verify that (i) the first eigenvalue is simple and (ii) the (normalized) spectral gap $\lambda_{2,h}-\lambda_{1,h}$ is large. This method also provides an initial estimate for $\lambda_{1,h}$. 

With this initial estimate for $\lambda_{1,h}$, we use the Lanczos iteration on the generalized eigenvalue problem \eqref{rationalmatrix}, 
\[\mathtt{K}_h \mathtt{u}_{1,h} = \lambda_{1,h} \mathtt{M}_h \mathtt{u}_{1,h},\]  
to compute a higher-accuracy approximation to $\lambda_{1,h}$ and a good approximation to the corresponding eigenvector. Thanks to the (large) spectral gap between the first and the second discrete eigenvalue, we know the first eigenvector will be well-approximated. The specific implementation we use is the {\tt eigs} subroutine in {\tt Matlab}, which gives as output the eigenpair $(\tilde{\lambda}_{1,h}, \tilde{\mathtt{u}}_{1,h})$.  The validated numerics {\tt INTLAB} algorithm {\tt VerifyEig} is then applied to the eigenpair $(\frac{6}{h^2 \alpha}l_{1,h}, \bar{\mathtt{u}}_{1,h})$. The output, using validated interval numerics, is the approximate (interval) eigenpair $\bar{\lambda_{1,h}},\bar{u_{1,h}}$. 
The assertion follows from the Banach contraction mapping principle, \cite{rump2}. Therefore, the true eigenvalue $\lambda_{1,h}\subset\{ \bar{\lambda}_{1,h} \pm \epsilon\}$ where $\epsilon$ is the radius of the eigenvalue inclusion interval. 

In \autoref{table:upper domains 1st and 2nd} and \autoref{table:lower domains 1st and 2nd},
we record the midpoints $\bar{\lambda}_{1,h}, \bar{\lambda}_{2,h}$ of the first and second eigenvalues for each of the domains. We report the {\it quality of the first eigenvector}:  $\delta$ is the largest (component-wise) relative error (radius of interval scaled by midpoint). 
In \autoref{table:upper domains 1st} and \autoref{table:lower domains 1st}
we record first eigenvalue (midpoint of interval) $\bar{\lambda}_{1,h}$, the radius $\epsilon$ of this interval, and the quantity $\bar{\lambda}_{1,h}-\lambda-L*$. It is clear that 
$$ \bar{\lambda}_{1,h}-\lambda-L*> \epsilon>0.$$ Since the true discrete eigenvalue is provably guaranteed to lie $ \in \{ \bar{\lambda}_{1,h} \pm \epsilon\}$, we conclude that $\lambda_{1,h}> L*$. Consequently, $\Lambda>\gamma$.

\begin{table}
    \begin{tabular}{@{}cccc@{}}
	\toprule
Domain&  $\bar{\Lambda}_{1,h} $&$\bar{\Lambda}_{2,h} $ &$128^2\delta$\\
	\midrule
0 & 12.32808937 & 61.28234427 & 3.73706385e-08 \\  
1 & 12.28049706 & 63.11306391 & 3.62476864e-08 \\  
2 & 12.27067982 & 59.72331787 & 3.86656900e-08 \\  
3 & 12.28475539 & 63.34657906 & 3.61297611e-08 \\  
4 & 12.32978563 & 62.41415118 & 3.66665220e-08 \\  
5 & 12.27032723 & 60.89756087 & 3.73813582e-08 \\  
6 & 12.26881872 & 61.32799996 & 3.71251840e-08 \\  
7 & 12.26612593 & 62.64435623 & 3.63766466e-08 \\  
8 & 12.28907295 & 61.74502565 & 3.69613229e-08 \\  
9 & 12.28227434 & 61.77654477 & 3.70115832e-08 \\ 
	\bottomrule
    \end{tabular}\caption{$\bar{\Lambda}_{1,h}=128^2\bar{\lambda}_{1,h},\bar{\Lambda}_{2,h}=128^2 \bar{\lambda}_{2,h},\delta$ for the upper domains
\label{table:upper domains 1st and 2nd}
}  
\end{table}

\begin{table}
    \begin{tabular}{@{}cccc@{}}
    \toprule
Domain&  $\bar{\Lambda}_{1,h} $& $\bar{\Lambda}_{2,h} $ &$128^2\delta$\\ 
    \midrule
0 & 12.35848215 & 47.63909357 & 6.49892877e-08 \\  
1 & 12.27959016 & 56.23540471 & 5.64489920e-08 \\  
2 & 12.41003685 & 63.87608517 & 4.84346952e-08 \\  
3 & 12.27157804 & 61.18083146 & 4.90587904e-08 \\  
4 & 12.32234011 & 62.36099167 & 4.82268720e-08 \\  
5 & 12.30303087 & 64.35809373 & 4.12266778e-08 \\  
6 & 12.29807616 & 61.14841468 & 4.91463301e-08 \\  
7 & 12.30521249 & 63.27332626 & 4.61834285e-08 \\  
8 & 12.29850781 & 62.20762852 & 4.69517628e-08 \\  
9 & 12.29425300 & 62.65668399 & 4.66658156e-08 \\  
    \bottomrule
\end{tabular}\caption{$\bar{\Lambda}_{1,h}=128^2\bar{\lambda}_{1,h},\bar{\Lambda}_{2,h}=128^2 \bar{\lambda}_{2,h},\epsilon$ for the lower domains
\label{table:lower domains 1st and 2nd}
}  
\end{table}

\begin{table}
    \begin{tabular}{@{}cccc@{}}
    \toprule
Domain&  $\bar{\Lambda}_{1,h}$& $128^2\epsilon$&$\bar{\Lambda}_{1,h}-128^2\epsilon-(12.25+3/256)$ \\ 
    \midrule
0& 12.32808937 & 4.14743795e-11 & 6.63706231e-02 \\  
1 & 12.28049706 & 4.44586590e-11 & 1.87783061e-02 \\  
2 & 12.27067982 & 4.25988134e-11 & 8.96106588e-03 \\  
3& 12.28475539 & 4.13429291e-11 & 2.30366406e-02 \\  
4& 12.32978563 & 4.97983876e-11 & 6.80668799e-02 \\  
5& 12.27032723 & 5.32356381e-11 & 8.60847819e-03 \\  
6& 12.26881872 & 3.81223941e-11 & 7.09997069e-03 \\  
7& 12.26612593 & 3.64668296e-11 & 4.40717954e-03 \\  
8& 12.28907295 & 3.70494746e-11 & 2.73542003e-02 \\  
9& 12.28227434 & 4.88817875e-11 & 2.05555934e-02 \\  
    \bottomrule
\end{tabular}
\caption{$\Lambda_{1,h}, 128^2\epsilon, \Lambda_{1,h}128^2\epsilon -(12.25+3/256)$, for the upper domains. 
\label{table:upper domains 1st}
}
\end{table}
\begin{table}
    \begin{tabular}{@{}cccc@{}}
	\toprule
Domain&  $\bar{\Lambda}_{1,h}$& $128^2\epsilon$&$\bar{\Lambda}_{1,h}-128^2\epsilon-(12.25+3/256)$ \\ 
	\midrule
0 &12.35848215 & 4.66915395e-11 & 9.67633998e-02\\  
1& 12.27959016 & 4.41691128e-11 & 1.78714114e-02\\  
2 &12.41003685 & 4.30731006e-11 & 1.48318103e-01\\  
3& 12.27157804 & 4.69331241e-11 & 9.85928892e-03\\  
4 &12.32234011 & 4.10835810e-11 & 6.06213576e-02\\  
5& 12.30303087 & 4.15116830e-11 & 4.13121178e-02\\  
6& 12.29807616 & 5.08979525e-11 & 3.63574115e-02\\  
7& 12.30521249 & 4.22684110e-11 & 4.34937388e-02\\  
8& 12.29850781 & 4.78745932e-11 & 3.67890648e-02\\  
9& 12.29425300 & 4.56950033e-11 & 3.25342545e-02\\  
	\bottomrule
\end{tabular}
\caption{$\Lambda_{1,h}, 128^2\epsilon, \Lambda_{1,h}128^2\epsilon -(12.25+3/256)$, for the lower domains. 
\label{table:lower domains 1st}
}
\end{table}


\begin{thebibliography}{99}

\bibitem{AH77a}\textbf{K.~Appel and W.~Haken}, \textit{Every planar map is four colorable.  {I}. {D}ischarging},  Illinois J. Math. \textbf{21} (1977), \textit{no.~3},  429--490. \mref{MR0543792}

\bibitem{AH77b}\textbf{K.~Appel, W.~Haken, and J.~Koch}, \textit{Every planar map is four  colorable. {II}. {R}educibility},  Illinois J. Math. \textbf{21} (1977),  \textit{no.~3}, 491--567. \mref{MR0543793}

\bibitem{AH89}\textbf{K.~Appel and W.~Haken}, \textit{Every planar map is four colorable},  Contemporary Mathematics, vol.~98, American Mathematical Society, Providence,  RI, 1989, With the collaboration of J. Koch. \mref{MR1025335}

\bibitem{armentano}\textbf{M.~G. Armentano and R.~G. Dur{\'a}n}, \textit{Asymptotic lower bounds  for eigenvalues by nonconforming finite element methods},  Electron. Trans.  Numer. Anal. \textbf{17} (2004), 93--101 (electronic). \mref{MR2040799}

\bibitem{AB02}\textbf{R.~Atar and K.~Burdzy}, \textit{On nodal lines of {N}eumann eigenfunctions}, Electron. Comm. Probab. \textbf{7} (2002), 129--139 (electronic). \mref{MR1937899} 

\bibitem{AB04} \textbf{R.~Atar and K.~Burdzy}, \textit{On {N}eumann eigenfunctions in lip domains}, J. Amer. Math. Soc. \textbf{17} (2004), \textit{no.~2}, 243--265 (electronic). \mref{MR2051611}

\bibitem{Ba80} \textbf{C.~Bandle}, \textit{Isoperimetric inequalities and applications},  Monographs and Studies in Mathematics, vol.~7, Pitman (Advanced Publishing  Program), Boston, Mass.-London, 1980. \mref{MR572958}

\bibitem{Be80} \textbf{C.~A. Berenstein}, \textit{An inverse spectral theorem and its relation  to the {P}ompeiu problem},  J. Analyse Math. \textbf{37} (1980), 128--144.  \mref{MR583635}

\bibitem{BY82} \textbf{C.~A. Berenstein and P.~Yang}, \textit{An overdetermined {N}eumann  problem in the unit disk},  Adv. in Math. \textbf{44} (1982), \textit{no.~1},  1--17. \mref{MR654546}

\bibitem{BB92} \textbf{P.~Blanchard and E.~Br{\"u}ning}, \textit{Variational methods in  mathematical physics}, Texts and Monographs in Physics, Springer-Verlag,  Berlin, 1992, A unified approach, Translated from the German by Gillian M.  Hayes. \mref{MR1230382}

\bibitem{CG14}\textbf{C.~Carstensen and J.~Gedicke}, \textit{Guaranteed lower bounds for  eigenvalues},  Math. Comp. \textbf{83} (2014), \textit{no.~290}, 2605--2629.  \mref{MR3246802}

\bibitem{cholmod} \textbf{Y.~Chen, T.~A. Davis, W.~W. Hager, S.~Rajamanickam}, \textit{Algorithm 887: CHOLMOD, Supernodal Sparse Cholesky Factorization and Update/Downdate}, ACM Transactions on Mathematical Software \textbf{35} (2008), \textit{no.~3}, 22:1 - 22:14.

%\bibitem{ciarlet} \textbf{P.~G. Ciarlet}, \textit{The finite element method for elliptic  problems}, North-Holland Publishing Co., Amsterdam-New York-Oxford, 1978,  Studies in Mathematics and its Applications, Vol. 4. \mref{MR0520174}

\bibitem{CH1} \textbf{R.~Courant and D.~Hilbert}, \textit{Methods of mathematical physics.  {V}ol. {I}}, Interscience Publishers, Inc., New York, N.Y., 1953.  \mref{MR0065391}

\bibitem{De12} \textbf{J.~Deng}, \textit{Some results on the {S}chiffer conjecture in  {$R^2$}},  J. Differential Equations \textbf{253} (2012), \textit{no.~8},  2515--2526. \mref{MR2950461}



\bibitem{GvL} \textbf{G.~H. Golub and C.~F. Van~Loan}, \textit{Matrix computations}, third  ed., Johns Hopkins Studies in the Mathematical Sciences, Johns Hopkins  University Press, Baltimore, MD, 1996. \mref{MR1417720}


%  \bibitem{FS10} \textbf{P.~Freitas and B.~Siudeja}, \textit{Bounds for the first {D}irichlet  eigenvalue of triangles and quadrilaterals},  ESAIM Control Optim. Calc. Var.  \textbf{16} (2010), \textit{no.~3}, 648--676. \mref{MR2674631}
  
\bibitem{GR00}\textbf{I.~S. Gradshteyn and I.~M. Ryzhik}, \textit{Table of integrals, series,  and products}, sixth ed., Academic Press, Inc., San Diego, CA, 2000,  Translated from the Russian, Translation edited and with a preface by Alan  Jeffrey and Daniel Zwillinger. \mref{MR1773820}

\bibitem{Ha06} \textbf{E.~M. Harrell}, \textit{Geometric lower bounds for the spectrum of  elliptic {PDE}s with {D}irichlet conditions in part},  J. Comput. Appl. Math.  \textbf{194} (2006), \textit{no.~1}, 26--35. \mref{MR2230968}

\bibitem{HO13} \textbf{T.~Hoffmann-Ostenhof}, \textit{{Eigenfunctions for 2-Dimensional Tori and for Rectangles with Neumann Boundary Conditions}}, Moscow Math. J. \textbf{15} (2015), \textit{no.~1}, 101-106.  

\bibitem{mpmath} \textbf{F.~Johansson et al.} \textit{mpmath: a Python library for arbitrary-precision floating-point arithmetic}, \url{http://code.google.com/p/mpmath}
%\bibitem{HP61}\textbf{W.~Hooker and M.~H. Protter}, \textit{Bounds for the first eigenvalue  of a rhombic membrane},  J. Math. and Phys. \textbf{39} (1960/1961), 18--34.  \mref{MR0127610}

\bibitem{scipy} \textbf{E.~Jones, T.~Oliphant, P.~Peterson, et al.}, {{SciPy}: Open source scientific tools for {Python}}, {2001--}, \url{http://www.scipy.org}.

\bibitem{KKS13}\textbf{T.~Kulczycki, M.~Kwasnicki, and B.~Siudeja}, \textit{{Spilling from a cognac glass}}. \arxiv{1311.7296}


\bibitem{La50} \textbf{C.~Lanczos}, \textit{An iteration method for the solution of the  eigenvalue problem of linear differential and integral operators},  J.  Research Nat. Bur. Standards \textbf{45} (1950), 255--282. \mref{MR0042791}

\bibitem{Lnotes} 
\textbf{R.~S. Laugesen}, \textit{{Spectral Theory of Partial Differential  Equations - Lecture Notes}}. \arxiv{1203.2344}

%\bibitem{LSmaxN}
%\textbf{R.~S. Laugesen and B.~A. Siudeja}, \textit{Maximizing {N}eumann  fundamental tones of triangles},  J. Math. Phys. \textbf{50} (2009),  \textit{no.~11}, 112903, 18. \mref{MR2567204}

\bibitem{LSminN}
\textbf{R.~S. Laugesen and B.~A. Siudeja}, \textit{Minimizing {N}eumann  fundamental tones of triangles: an optimal {P}oincar\'e inequality},  J.  Differential Equations \textbf{249} (2010), \textit{no.~1}, 118--135.  \mref{MR2644129}

%\bibitem{LSminD}
%\textbf{R.~S. Laugesen and B.~A. Siudeja}, \textit{Dirichlet eigenvalue sums on  triangles are minimal for equilaterals},  Comm. Anal. Geom. \textbf{19}  (2011), \textit{no.~5}, 855--885. \mref{MR2886710}

\bibitem{LW86}\textbf{H.~A. Levine and H.~F. Weinberger}, \textit{Inequalities between  {D}irichlet and {N}eumann eigenvalues},  Arch. Rational Mech. Anal.  \textbf{94} (1986), \textit{no.~3}, 193--208. \mref{MR846060}

\bibitem{LMW} \textbf{A.~Logg, K.-A. Mardal, and G.~Wells}, \textit{Automated solution of  differential equations by the finite element method. The FEniCS book.},  {Lecture Notes in Computational Science and Engineering 84. Berlin: Springer.  xiii, 723~p.} (2012).

\bibitem{MC02}
\textbf{B.~J. McCartin}, \textit{Eigenstructure of the equilateral triangle.  {II}. {T}he {N}eumann problem},  Math. Probl. Eng. \textbf{8} (2002),  \textit{no.~6}, 517--539. \mref{MR1967479}

\bibitem{Mi12}
\textbf{Y.~Miyamoto}, \textit{A planar convex domain with many isolated ``hot  spots'' on the boundary},  Jpn. J. Ind. Appl. Math. \textbf{30} (2013),  \textit{no.~1}, 145--164. \mref{MR3022811}
 
%\bibitem{P91}
%\textbf{R.~P{\"u}tter}, \textit{On the nodal lines of second eigenfunctions of  the free membrane problem},  Appl. Anal. \textbf{42} (1991),  \textit{no.~3-4}, 199--207. \mref{MR1124959}
 
\bibitem{aeqb}\textbf{M.~Petkov{\v{s}}ek, H.~S. Wilf, and D.~Zeilberger}, \textit{{$A=B$}}, A  K Peters, Ltd., Wellesley, MA, 1996, With a foreword by Donald E. Knuth, With  a separately available computer disk. \mref{MR1379802}

\bibitem{robertson-etal-1997} \textbf{N. Robertson et al.}, \textit{The four-colour theorem}, J. Combin. Theory Ser. B \textbf{70} (1997) \textit{no.~1}, 2--44. \mref{MR1441258}

\bibitem{Ru99a} \textbf{S.M.~Rump}, \textit{INTLAB - INTerval LABoratory, in Tibor Csendes, editor, Developments in Reliable Computing}, Kluwer Academic Publishers, Dordrecht (1999), 77--104.

\bibitem{rump2} 
\textbf{S.~M. Rump}, \textit{Guaranteed inclusions for the complex generalized  eigenproblem},  Computing \textbf{42} (1989), \textit{no.~2-3}, 225--238.  \mref{MR1014300}
%\bibitem{S10}
%\textbf{B.~Siudeja}, \textit{Isoperimetric inequalities for eigenvalues of  triangles},  Indiana Univ. Math. J. \textbf{59} (2010), \textit{no.~3},  1097--1120. \mref{MR2779073}

\bibitem{S13} \textbf{B.~Siudeja}, \textit{Hot spots conjecture for a class of acute  triangles},  Math. Z. \textbf{280} (2015), \textit{no.~3-4}, 783--806.  \mref{MR3369351}

\bibitem{Srhombi}
\textbf{B.~Siudeja}, \textit{{On mixed Dirichlet-Neumann eigenvalues of  triangles}}. to appear in Proc. Amer. Math. Soc. \arxiv{1501.07618}

\bibitem{sage} \textbf{W.~A.~Stein et al.} \textit{Sage Mathematics Software}, The Sage Development Team, 2014, \url{http://www.sagemath.org}.

\bibitem{sympy} \textbf{SymPy Development Team}, \textit{SymPy: Python library for symbolic mathematics} (2014), \url{http://www.sympy.org}.

\bibitem{TB97} \textbf{L.~N. Trefethen and D.~Bau, III}, \textit{Numerical linear algebra},  Society for Industrial and Applied Mathematics (SIAM), Philadelphia, PA,  1997. \mref{MR1444820}

\bibitem{Yau} 
\textit{Seminar on {D}ifferential {G}eometry}, Annals of Mathematics Studies,  vol. 102, Princeton University Press, Princeton, N.J.; University of Tokyo  Press, Tokyo, 1982, Papers presented at seminars held during the academic  year 1979--1980, Edited by Shing Tung Yau. \mref{MR645728}

\end{thebibliography}
\end{document}